\theoremstyle{definition}
\newtheorem{theorem}{Theorem}[section]
\newtheorem{lemma}[theorem]{Lemma}
\newtheorem{proposition}[theorem]{Proposition}
\newtheorem{definition}[theorem]{Definition}
\newtheorem{corollary}[theorem]{Corollary}
\newtheorem{remark}[theorem]{Remark}
\newtheorem{problem}[theorem]{Problem}
\newtheorem{notation}[theorem]{Notation}
\newtheorem{example}[theorem]{Example}
\begin{document}
\title[$L^{p}$ representations of étale groupoids]{Representations of étale
groupoids on $L^p$-spaces}
\author[Eusebio Gardella]{Eusebio Gardella}
\address{Eusebio Gardella\\
Westfälische Wilhelms-Universität Münster, Fachbereich Mathematik,
Einsteinstraße 62, 48149 Münster, Germany}
\email{gardella@uni-muenster.de}
\urladdr{http://pages.uoregon.edu/gardella/}
\author[Martino Lupini]{Martino Lupini}
\address{Martino Lupini\\
Fakultät für Mathematik, Universität Wien, Oskar-Morgenstern-Platz 1, Room
02.126, 1090 Wien, Austria}
\email{martino.lupini@univie.ac.at}
\urladdr{http://www.lupini.org/}
\curraddr{Mathematics Department\\
California Institute of Technology\\
1200 E. California Blvd\\
MC 253-37\\
Pasadena, CA 91125}
\thanks{Eusebio Gardella was supported by the US National Science Foundation
through Grant DMS-1101742. Martino Lupini was supported by the York
University Elia Scholars Program. This work was completed when the authors
were attending the Thematic Program on Abstract Harmonic Analysis, Banach
and Operator Algebras at the Fields Institute. The hospitality of the Fields
Institute is gratefully acknowledged.}
\dedicatory{}
\subjclass[2000]{Primary 47L10, 22A22; Secondary 46H05}
\keywords{Groupoid, Banach bundle, $L^p$-space, $L^p$-operator algebra,
Cuntz algebra}

\begin{abstract}
For $p\in (1,\infty)$, we study representations of étale groupoids on $L^{p}$%
-spaces. Our main result is a generalization of Renault's disintegration
theorem for representations of étale groupoids on Hilbert spaces. We
establish a correspondence between $L^{p}$-representations of an étale
groupoid $G$, contractive $L^{p}$-representations of $C_{c}(G)$, and tight
regular $L^{p}$-representations of any countable inverse semigroup of open
slices of $G$ that is a basis for the topology of $G$. We define analogs $%
F^{p}(G)$ and $F_{\mathrm{red}}^{p}(G)$ of the full and reduced groupoid
C*-algebras using representations on $L^{p}$-spaces. As a consequence of our
main result, we deduce that every contractive representation of $F^{p}(G)$
or $F_{\mathrm{red}}^{p}(G)$ is automatically completely contractive.
Examples of our construction include the following natural families of
Banach algebras: discrete group $L^{p}$-operator algebras, the analogs of
Cuntz algebras on $L^{p}$-spaces, and the analogs of AF-algebras on $L^{p} $%
-spaces. Our results yield new information about these objects: their
matricially normed structure is uniquely determined. More generally,
groupoid $L^{p}$-operator algebras provide analogs of several families of
classical C*-algebras, such as Cuntz-Krieger C*-algebras, tiling
C*-algebras, and graph C*-algebras.
\end{abstract}

\maketitle
\tableofcontents

\section{Introduction}

Groupoids are a natural generalization of groups, where the operation is no
longer everywhere defined. Succinctly, a groupoid can be defined as a small
category where every arrow is invertible, with the operations being
composition and inversion of arrows. A groupoid is called locally compact
when it is endowed with a (not necessarily Hausdorff) locally compact
topology compatible with the operations; see \cite{paterson_groupoids_1999}.
Any locally compact group is in particular a locally compact groupoid. More
generally, one can associate to a continuous action of a locally compact
group on a locally compact Hausdorff space the corresponding action groupoid
as in \cite{lupini_polish_2014}. This allows one to regard locally compact
groupoids as a generalization of topological dynamical systems.

A particularly important class of locally compact groupoids are those where
the operations are local homeomorphisms. These are the so-called étale---or $%
r$-discrete \cite{renault_groupoid_1980}---groupoids, and constitute the
groupoid analog of actions of discrete groups on locally compact spaces. In
fact, they can be described in terms of partial actions of inverse
semigroups on locally compact spaces; see \cite{exel_inverse_2008}.
Alternatively, one can characterize étale groupoids as the locally compact
groupoids having an open basis of \emph{bisections}, i.e.\ sets where the
source and range maps are injective \cite[Section 3]{exel_inverse_2008}. In
the étale case, the set of all open bisections is an inverse semigroup.

The representation theory of étale groupoids on Hilbert spaces has been
intensively studied since the seminal work of Renault \cite%
{renault_groupoid_1980}; see also the monograph \cite%
{paterson_groupoids_1999}. A representation of an étale groupoid $G$ on a
Hilbert space is an assignment $\gamma \mapsto T_{\gamma }$ of an invertible
isometry $T_{\gamma }$ between Hilbert spaces to any element $\gamma $ of $G$%
. Such an assignment is required to respect the algebraic and measurable
structure of the groupoid. The fundamental result of \cite%
{renault_groupoid_1980} establishes a correspondence between the
representations of an étale groupoid $G$ and the nondegenerate $I$-norm
contractive representations of $C_{c}(G)$. (The $I$-norm on $C_{c}(G)$ is
the analogue of the $L^{1}$-norm for discrete groups. When $G$ is Hausdorff, 
$C_{c}(G)$ is just the space of compactly-supported continuous functions on $%
G$. The non-Hausdorff case is more subtle; see \cite[Definition 3.9]%
{exel_inverse_2008}.) Moreover, such a correspondence is compatible with the
natural notions of equivalence for representations of $G$ and $C_{c}(G)$. In
turn, nondegenerate representations of $C_{c}(G)$ correspond to tight
regular representations of any countable inverse semigroup $\Sigma $ of open
bisections of $G$ that forms a basis for the topology of $G$. Again, such a
correspondence preserves the natural notions of equivalence for
representations of $C_{c}(G)$ and $\Sigma $. Tightness is a nondegeneracy
condition introduced by Exel in \cite[Section 11]{exel_inverse_2008}. In the
case when the set $G^{0}$ of objects of $G$ is compact and zero-dimensional,
one can take $\Sigma $ to be the inverse semigroup of compact open
bisections of $G$. In this case the semilattice $E(\Sigma )$ of idempotent
elements of $\Sigma $ is the Boolean algebra of clopen subsets of $G^{0}$,
and a representation of $G$ is tight if and only if its restriction to $%
E(\Sigma )$ is a Boolean algebra homomorphism.

In this paper, we show how an important chapter in the theory of C*-algebras
admits a natural generalization to algebras of operators on $L^{p}$-spaces.
We prove that the correspondences described in the paragraph above
generalize when one replaces representations on Hilbert spaces with
representations on $L^{p}$-spaces for some Hölder exponent $p$ in $(1,\infty
)$. For $p=2$, one recovers Renault's and Exel's results. Interestingly, the
proofs for $p=2$ and $p\neq 2$ differ drastically. The methods when $p\neq 2$
are based on the characterization of invertible isometries of $L^{p}$-spaces
proved by Banach in \cite[Section 5]{banach_theorie_1993}. The result was
later generalized by Lamperti to not necessarily surjective isometries
between $L^{p}$-spaces \cite{lamperti_isometries_1958}, hence the name
Banach-Lamperti theorem.

Following \cite%
{pisier_completely_1990,merdy_factorization_1996,daws_p-operator_2010} we
say that a representation of a matricially normed algebra $A$ on $%
L^{p}(\lambda )$ is $p$-\emph{completely contractive }if all its
amplifications are contractive when the algebra of $n\times n$ matrices of
bounded linear operators on $L^{p}(\lambda )$ is identified with the algebra
of bounded linear operators on $L^{p}(\lambda \times c_{n})$. (Here and in
the following, $c_{n}$ denotes the counting measure on $n$ points.) If $G$
is an étale groupoid, then the identification between $M_{n}(C_{c}(G))$ and $%
C_{c}(G_{n})$ for a suitable amplification $G_{n}$ of $G$ defines matricial
norms on the algebra $C_{c}(G)$. As a corollary of our analysis a
contractive representation of $C_{c}(G)$ on an $L^{p}$-space is
automatically $p$-completely contractive.

In the case of Hilbert space representations, the universal object
associated to $C_{c}(G) $ is the groupoid C*-algebra $C^{\ast }(G) $, as
defined in \cite[Chapter 3]{paterson_groupoids_1999}. One can also define a
reduced version $C_{\mathrm{red}}^{\ast }(G) $ (see \cite[pages 108-109]%
{paterson_groupoids_1999}), that only considers representations of $C_{c}(G) 
$ that are induced---in the sense of Rieffel \cite[Appendix D]%
{paterson_groupoids_1999}---from a Borel probability measure on the space of
objects of $G$. Amenability of the groupoid $G$ implies that the canonical
surjection from $C^{\ast }(G) $ to $C_{\mathrm{red}}^{\ast }(G) $ is an
isomorphism. In the case when $G$ is a countable discrete group, these
objects are the usual full and reduced group C*-algebras.

A similar construction can be performed for an arbitrary $p$ in $(1,\infty )$%
, and the resulting universal objects are the full and reduced groupoid $%
L^{p}$-operator algebras $F^{p}(G)$ and $F_{\mathrm{red}}^{p}(G)$ of $G$.
When $G$ is a countable discrete group, these are precisely the full and
reduced group $L^{p}$-operator algebras of $G$ as defined in \cite%
{phillips_crossed_2013}; see also \cite{gardella_group_2014}. When $G$ is
the groupoid associated with a Bratteli diagram as in \cite[Section 2.6]%
{renault_c*-algebras_2009}, one obtains the spatial $L^{p}$-analog of an AF
C*-algebra; see \cite{phillips_analogs_2014}. (The $L^{p}$-analogs of UHF
C*-algebras are considered in \cite%
{phillips_simplicity_2013,phillips_isomorphism_2013}.) When $G$ is one of
the Cuntz groupoids defined in \cite[Section 2.5]{renault_c*-algebras_2009},
one obtains the $L^{p}$-analogs of the corresponding Cuntz algebra from \cite%
{phillips_analogs_2012,phillips_simplicity_2013,phillips_isomorphism_2013}. 
\newline
\indent More generally, this construction provides several new examples of $%
L^{p}$-analogs of \textquotedblleft classical\textquotedblright\
C*-algebras, such as Cuntz-Krieger algebras, graph algebras, and tiling
C*-algebras (all of which can be realized as groupoid C*-algebras for a
suitable étale groupoid; see \cite{kumjian_graphs_1997} and \cite%
{paterson_groupoids_1999}). It is worth mentioning here that there seems to
be no known example of a nuclear C*-algebra that cannot be described as the
enveloping C*-algebra of a locally compact groupoid.

The groupoid perspective pursued in this paper contributes to clarify what
are the well-behaved\ representations of algebraic objects---such as the
Leavitt algebras, Bratteli diagrams, or graphs---on $L^{p}$-spaces. In \cite%
{phillips_analogs_2012,phillips_simplicity_2013,phillips_isomorphism_2013},
several characterizations are given for well behaved representations of
Leavitt algebras and stationary Bratteli diagrams. The fundamental property
considered therein is the uniqueness of the norm that they induce. The
groupoid approach shows that these representations are precisely those
coming from representations of the associated groupoid or, equivalently, its
inverse semigroup of open bisections.

Another upshot of the present work is that the groupoid $L^{p}$-operator
algebras $F^{p}(G)$ and $F_{\mathrm{red}}^{p}(G)$ satisfy an automatic $p$%
-complete contractiveness\ property for contractive homomorphisms into other 
$L^{p}$-operator algebras. In fact, $F^{p}(G)$ and $F_{\mathrm{red}}^{p}(G)$
have canonical matrix norms. Such matrix norm structure satisfies the $L^{p}$%
-analog of Ruan's axioms for operator spaces as defined in \cite[Subsection
4.1]{daws_p-operator_2010} building on \cite%
{pisier_completely_1990,merdy_factorization_1996}. Using the terminology of 
\cite[Subsection 4.1]{daws_p-operator_2010}, this turns the algebras $%
F^{p}(G)$ and $F_{\mathrm{red}}^{p}(G)$ into $p$-operator systems such that
the multiplication is $p$-completely contractive. It is a corollary of our
main results that any contractive representation of these algebras on an $%
L^{p}$-space is automatically $p$-completely contractive. As a consequence
the matrix norms on $F^{p}(G)$ and $F_{\mathrm{red}}^{p}(G)$ are uniquely
determined---as it is the case for C*-algebras.

It is still not clear what are the well-behaved algebras of operators on $%
L^{p}$-spaces. Informally speaking, these should be the $L^{p}$-operator
algebras that behave like C*-algebras. The results in this paper provide
strong evidence that $L^{p}$-operator algebras of the form $F^{p}(G)$ and $%
F_{\mathrm{red}}^{p}(G)$ for some étale groupoid $G$, indeed behave like\
C*-algebras. Beside having the automatic complete contractiveness property
for contractive representations on $L^{p}$-spaces, another property that $%
F^{p}(G)$ and $F_{\mathrm{red}}^{p}(G)$ share with C*-algebras is being
generated by spatial partial isometries as defined in \cite%
{phillips_analogs_2012}. These are the partial isometries whose support and
range idempotents are hermitian operators in the sense of \cite%
{lumer_semi-inner-product_1961}; see also \cite{berkson_hermitian_1972}. (In
the C*-algebra case, the hermitian idempotents are precisely the orthogonal
projections.) In particular, this property forces the algebra to be a
C*-algebra in the case $p=2$. (A stronger property holds for unital
C*-algebras, namely being generated by invertible isometries; see \cite[%
Theorem~II.3.2.16]{blackadar_operator_2006}. As observed by Chris Phillips,
this property turns out to fail for some important examples of algebras of
operators on $L^{p}$-spaces, such as the $L^{p}$-analog of the Toeplitz
algebra.)

The present work indicates that the properties of being generated by spatial
partial isometries, and having automatic complete contractiveness for
representations on $L^{p}$-spaces, are very natural requirements for an $%
L^{p}$-operator algebra to behave like a C*-algebra. 
We believe that the results of this paper are a step towards a successful
identification of those properties that characterize the class of
\textquotedblleft well behaved" $L^{p}$-operator algebras.

\subsection{Notation}

\label{Subsection: notation} We denote by $\omega $ the set of natural
numbers including $0$. An element $n\in \omega $ will be identified with the
set $\{0,1,\ldots ,n-1\}$ of its predecessors. (In particular, 0 is
identified with the empty set.) We will therefore write $j\in n$ to mean
that $j$ is a natural number and $j<n$.\newline
\indent For $n\in \omega $ or $n=\omega $, we denote by $c_{n}$ the counting
measure on $n$. We denote by $\mathbb{Q}(i)^{\oplus \omega }$ the set of all
sequences $(\alpha _{n})_{n\in \omega }$ of complex numbers in $\mathbb{Q}%
(i) $ such that $\alpha _{n}=0$ for all but finitely many indices $n\in
\omega $.\newline
\indent All Banach spaces will be \emph{reflexive}, and will be endowed with
a (Schauder)\emph{\ basis}. We recall here some terminology concerning bases
in Banach spaces. For more information and details we refer the reader to
the monographs \cite{carothers_short_2005,megginson_introduction_1998}.
Suppose that $(b_{n})_{n\in \omega }$ is a basis of a Banach space $Z$. We
denote by $(b_{n}^{\prime })_{n\in \omega }$ the associated sequence of 
\emph{coefficient functionals}. If $k\in \omega $, then $b_{k}^{\prime}$ is
the element of the dual space $Z^{\prime }$ of $Z$ that maps $z\in Z$ to the 
$k$-th coefficient of $z$ with respect to the basis $(b_{n})_{n\in \omega }$%
. If $Z$ is reflexive, then $(b_{n}^{\prime })_{n\in \omega }$ is a basis
for $Z^{\prime }$ \cite[Proposition 5.3]{singer_bases_1970}. The basis $%
(b_{n})_{n\in \omega }$ is:

\begin{itemize}
\item \emph{unconditional }if, for every $x\in Z$, the series $%
\sum_{n}b_{n}^{\prime }(x)b_{n}$ converges unconditionally to $x$ (this
means that for any bijection $\pi\colon \omega \rightarrow \omega $ the
series $\sum_{n}b_{\pi (n)}^{\prime }b_{n}$ converges to $x$);

\item \emph{normal }if $\left\| b_{n}\right\| =\left\| b_{n}^{\prime
}\right\| =1$ for every $n\in \omega $;

\item \emph{boundedly complete }if the series $\sum\limits_{n\in \omega
}\lambda _{n}b_{n}$ converges in $Z$ whenever it has uniformly bounded
partial sums.
\end{itemize}

We say that a positive real number $K>0$ is a \emph{basis constant} for $%
(b_{n})_{n\in \omega }$ if we have $\left\|\sum_{i\in n}\langle x,
b_{i}^{\prime }\rangle b_i\right\|\leq K \|x\|$ for all $x\in Z$ and all $%
n\in\omega$. It is furthermore an \emph{unconditional basis constant }if $%
\left\|\sum_{i\in A}\langle x, b_{i}^{\prime }\rangle b_i\right\|\leq K
\|x\| $ for all $x\in Z$ and every finite subset $A$ of $\omega$. Every
basis has a basis constant $K$ \cite[Theorem 3.1]{carothers_short_2005}, and
every unconditional basis has an unconditional basis constant \cite[%
Proposition 4.2.29]{megginson_introduction_1998}. By \cite[Theorem~7.4]%
{carothers_short_2005}, every basis of a reflexive Banach space is boundedly
complete.

All Borel spaces will be \emph{standard}. For a standard Borel space $X$, we
denote by $B(X)$ the space of complex-valued bounded Borel functions on $X$,
and by $\mathcal{B}(X)$ the $\sigma $-algebra of Borel subsets of $X$. For a
Borel measure $\mu $ on a standard Borel space $X$, we denote by $\mathcal{B}%
_{\mu }$ the \emph{measure algebra} of $\mu $. This is the quotient of the
Boolean algebra $\mathcal{B}(X)$ of Borel subsets of $X$ by the ideal of $%
\mu $-null Borel subsets. By \cite[Exercise 17.44]{kechris_classical_1995} $%
\mathcal{B}_{\mu }$ is a complete Boolean algebra. The characteristic
function of a set $F$ will be denoted by $\chi _{F}$.

Given a measure space $(X,\mu )$ and a Hölder exponent $p\in (1,\infty )$,
we will denote the Lebesgue space $L^{p}(X,\mu )$ simply by $L^{p}(\mu )$.
Recall that $L^{p}(\mu )$ is separable precisely if there is a $\sigma $%
-finite Borel measure $\lambda $ on a standard Borel space $Z$ such that $%
L^{p}(\lambda )$ is isometrically isomorphic to $L^{p}(\mu )$. Moreover
there exists $n\in \omega \cup \{\omega \}$ such that $( Z,\lambda) $ is 
Borel-isomorphic to $([0,1]\sqcup n,\nu \sqcup c_{n})$, where $%
\nu $ is the Lebesgue measure on $[0,1]$. The push-forward of a measure $\mu 
$ under a function $\phi $ will be denote by $\phi _{\ast }\mu $ or $\phi
_{\ast }(\mu )$.

If $X$ and $Z$ are Borel spaces, we say that $Z$ is \emph{fibred} over $X$
if there is a Borel surjection $q\colon Z\rightarrow X$. In this case, we
call $q$ the \emph{fiber map}. A \emph{section }of $Z$ is a map $\sigma
\colon X\rightarrow Z$ such that $q\circ \sigma $ is the identity map of $X$%
. For $x\in X$, we denote the value of $\sigma $ at $x$ by $\sigma _{x}$,
and the fiber $q^{-1}(\{x\})$ over $x$ is denoted by $Z_{x}$. If $Z^{(0)}$
and $Z^{(1)}$ are Borel spaces fibred over $X$ via fiber maps $q^{(0)}$ and $%
q^{(1)}$ respectively, then their \emph{fiber product} $Z^{(0)}\ast Z^{(1)}$
is the Borel space fibred over $X$ defined by $Z^{(0)}\ast
Z^{(1)}=\{(z^{(0)},z^{(1)})\colon q^{(0)}(z^{(0)})=q^{(1)}(z^{(1)})\}$.

If $E$ and $F$ are Banach spaces, we will denote by $B(E,F)$ the Banach
space of bounded linear maps from $E$ to $F$. When $E=F$, we abbreviate $%
B(E,E)$ to just $B(E)$. Despite the apparent notational conflict with the
set of Borel functions $B(X)$ on a measurable space $X$, confusion will be
unlikely to arise, and it should always be clear from the context what $%
B(\cdot )$ means. Given a Banach space $E$, its dual space will be denoted
by $E^{\prime }$. Similarly, if $T\colon E\rightarrow F$ is a bounded linear
operator between Banach spaces $E$ and $F$, its dual map will be denoted $%
T^{\prime }\colon F^{\prime }\rightarrow E^{\prime }$. Finally, if $p\in
(1,\infty )$, we will write $p^{\prime }$ for its conjugate Hölder exponent,
which satisfies $\frac{1}{p}+\frac{1}{p^{\prime }}=1$. (We will reserve the
letter $q$ for fiber maps.) We exclude $p=1$ in our analysis mostly for
convenience, because we use duality in many situations. We do not know
whether the results of this paper carry over to the case $p=1$.






\section{Borel Bundles of Banach spaces}

\label{Section: Banach bundles}

\begin{definition}
\label{definition Banach bundles}

Let $X$ be a Borel space. A (standard)\emph{\ Borel Banach bundle} over $X$
is a Borel space $\mathcal{Z}$ fibred over $X$ together with

\begin{enumerate}
\item Borel maps $+\colon \mathcal{Z}\ast \mathcal{Z}\rightarrow \mathcal{Z}$%
, $\cdot \colon \mathbb{C}\times \mathcal{Z}\rightarrow \mathcal{Z}$, and $%
\| \cdot \| \colon \mathcal{Z}\rightarrow \mathbb{C}$,

\item a Borel section $\mathbf{0}\colon X\rightarrow \mathcal{Z}$,

\item a Borel function $d\colon X\rightarrow \omega \cup \{ \omega \} $, and

\item a sequence $\left( \sigma _{n}\right) _{n\in \omega }$ of Borel
sections $\sigma _{n}\colon X\rightarrow \mathcal{Z}$
\end{enumerate}

such that the following holds:

\begin{itemize}
\item $\mathcal{Z}_{x}$ is a reflexive Banach space of dimension $d(x)$
with zero element $\mathbf{0}_{x}$ for every $x\in X$;

\item there is $K>0$ such that, for every $x\in X$, the sequence $\left(
\sigma _{n,x}\right) _{n\in d(x)}$ is a basis of $\mathcal{Z}_{x}$ with
basis constant $K$, and the sequence $(\sigma _{n,x}^{\prime })_{n\in d(x)}$
is a basis of $\mathcal{Z}_{x}^{\prime }$ with basis constant $K$.
\end{itemize}
\end{definition}

In the definition above $(\sigma _{n,x}^{\prime })_{n\in d(x)}$ is the
sequence of coefficient functionals associated with the basis $(\sigma
_{n,x})_{n\in d(x)}$ of $\mathcal{Z}_{x}$. We say that the sequence $%
(\sigma _{n})_{n\in \omega }$ is a \emph{basic sequence} for $\mathcal{Z}$
with \emph{basis constant} $K$. If furthermore there exists $K>0$ such that
for every $x\in X$, the sequences $(\sigma _{n,x})_{n\in d(x)}$ and $\left(
\sigma _{n,x}^{\prime }\right) _{n\in d(x)}$ are unconditional bases of $%
\mathcal{Z}_{x}$ and $\mathcal{Z}_{x}^{\prime }$ with unconditional basis
constant $K$, then we say that $(\sigma _{n})_{n\in \omega }$ is an \emph{%
unconditional basic sequence }with unconditional basis constant $K$.
Finally, we say that $(\sigma _{n})_{n\in \omega }$ is a \emph{normal basic
sequence} if $\left\Vert \sigma _{n,x}\right\Vert =\left\Vert \sigma
_{n,x}^{\prime }\right\Vert =1$ for every $n\in \omega $ and $x\in X$.

\begin{example}[Constant bundles]
Let $X$ be a Borel space, let $Z$ be a reflexive Banach space, and set $%
\mathcal{Z}=X\times Z$. Then $\mathcal{Z}$ with the product Borel structure
is naturally a Borel Banach bundle, where each fiber $\mathcal{Z}_{x}$ is
isomorphic to $Z$. In the particular case when $Z$ is the field of complex
numbers, this is called the \emph{trivial bundle} over $X$.
\end{example}

\begin{example}[Disjoint unions]
\label{Example:disjoint-union}Suppose that, for $i\in \left\{ 0,1\right\} $, 
$\mathcal{Z}_{i}$ is a Borel Banach bundle on the Borel space $X_{i}$. Then
one can consider the disjoint union $X_{0}\sqcup X_{1}$, which is endowed
with a canonical Borel structure. One can then consider the \emph{disjoint
union bundle} $\mathcal{Z}:=\mathcal{Z}_{0}\sqcup \mathcal{Z}_{1}$, which is
a Borel Banach bundle over $X_{0}\sqcup X_{1}$ defined by $\mathcal{Z}_{x}:=(%
\mathcal{Z}_{i})_{x}$ for $x\in X_{i}$.
\end{example}

\begin{remark}
\label{Remark:dimension-fibers}In view of Example \ref%
{Example:disjoint-union}, in the development of the theory of Borel Banach
bundles one can assume without loss of generality that the fibers $\mathcal{Z%
}_{x}$ of the bundle $\mathcal{Z}$ have fixed dimension $d$ independent of $%
x\in X$. Indeed, an arbitrary Borel Banach bundle is a disjoint union in the
sense of Example \ref{Example:disjoint-union} of bundles with fibers with
fixed dimension. In view of this observation, we will consider in the
following Borel Banach bundles with fiber of a fixed dimension $d\in \omega
\cup \left\{ \omega \right\} $. Furthermore, to fix the ideas we only
consider the case when $d$ is infinite, which is the most interesting case.
\end{remark}

Let $q\colon \mathcal{Z}\rightarrow X$ be a Borel Banach bundle. Then the
space of Borel sections of $\mathcal{Z}$ has a natural structure of $B(X)$%
-module. Accordingly, if $\xi _{1}$ and $\xi _{2}$ are Borel sections of $%
\mathcal{Z}$ and $f\in B(X)$, we denote by $\xi _{1}+\xi _{2}$ and $f\xi $
the Borel sections given by $\left( \xi _{1}+\xi _{2}\right) _{x}=(\xi
_{1})_{x}+(\xi _{2})_{x}\ $and $\left( f\xi \right) _{x}=f(x)\xi _{x}$ for
every $x$ in $X$. If $E$ is a Borel subset of $X$, then $q^{-1}(E)$ is
canonically a Borel Banach bundle over $E$, called the \emph{restriction }of 
$\mathcal{Z}$ to $E$, and denoted by $\mathcal{Z}|_{E}$.

\begin{remark}
A Borel Banach bundle where each fiber is a Hilbert space is called a \emph{%
Borel Hilbert bundle}. Such bundles (usually called just Hilbert bundles)
are the key notion in the study of representation of groupoids on Hilbert
spaces; see \cite[Appendix F]{williams_crossed_2007}, \cite[Section 3.1]%
{paterson_groupoids_1999}, and \cite[Section 2]{ramsay_virtual_1971}. The
Gram-Schmidt process shows that a Borel Hilbert bundle $\mathcal{H}$ over $X$
always has a basic sequence $(\sigma _{n})_{n\in \omega }$ such that for all 
$x$ in $X$, the sequence $(\sigma _{n,x})_{n\in \omega }$ is an \emph{%
orthonormal basis }of $\mathcal{H}_{x}$.
\end{remark}

A similar formalism is used by Lafforgue to study semi-continuous bundles of
Banach spaces in duality \cite%
{lafforgue_k-theorie_2002,lafforgue_k-theorie_2007}.

\subsection{Canonical Borel structures\label{Subsection: canonical Borel
structures}}

Let $X$ be a Borel space, and let $\mathcal{Z}$ be a set (with no Borel
structure) fibred over $X$. Assume there are operations%
\begin{equation*}
+\colon \mathcal{Z}\ast \mathcal{Z}\rightarrow \mathcal{Z}\ \ ,\ \ \cdot
\colon \mathbb{C}\times \mathcal{Z}\rightarrow \mathcal{Z}\ \ \mbox{ and }\
\ \Vert \cdot \Vert \colon \mathcal{Z}\rightarrow \mathbb{C},
\end{equation*}%
making each fiber a \emph{reflexive }Banach space. In this situation, we
will say that $\mathcal{Z}$ is a \emph{bundle of Banach spaces} over $X$,
and will denote it by $\bigsqcup\nolimits_{x\in X}\mathcal{Z}_{x}$. Let $%
\mathcal{Z}^{\prime }$ be the set of pairs $\left( x,v\right) $ for $x\in X$
and $v\in \mathcal{Z}_{x}^{\prime }$. Then $\mathcal{Z}^{\prime }$ is also a
bundle of Banach spaces over $X$. Suppose further that there exist $K>0$ and
a sequence $(\sigma _{n})_{n\in \omega }$ of sections $\sigma _{n}\colon
X\rightarrow \mathcal{Z}$ such that, for every $x\in X$, the sequence $%
(\sigma _{n,x})_{n\in \omega }$ is a basis of $\mathcal{Z}_{x}$ with
associated sequence of coefficient functionals $(\sigma _{n,x}^{\prime
})_{n\in \omega }$ and with basis constant $K$. Assume that for every $m\in
\omega $ and every sequence $(\alpha _{j})_{j\in m}$ in $\mathbb{Q}%
(i)^{\oplus m}$, the map $X\rightarrow \mathbb{R}$ given by  $x\mapsto 
\left\Vert \sum\limits_{j\in m}\alpha _{j}\sigma _{j,x}\right\Vert$ 
is Borel. Set 
\begin{equation*}
Z=\left\{ \left( x,(\alpha _{n})_{n\in \omega }\right) \in X\times \mathbb{C}%
^{\omega }\colon \sup_{m\in \omega }{}\left\Vert \sum\limits_{j\in m}\alpha
_{j}\sigma _{j,x}\right\Vert <\infty \right\} \text{.}
\end{equation*}%

We claim that $Z$ is a Borel subset of $X\times \mathbb{C}^{\omega }$. To
see this, note that a pair $\left( x,(\alpha _{n})_{n\in \omega }\right) $
in $X\times \mathbb{C}^{\omega }$ belongs to $Z$ if and only if there is $%
N\in \omega $ such that for every $m,k\in \omega $ there is $(\beta
_{j})_{j\in m}$ in $\mathbb{Q}(i)^{\oplus m}$ such that $\max_{j\in
n}\left\vert \alpha _{j}-\beta _{j}\right\vert \leq \frac{1}{2^{k}}\ $and $%
\left\Vert \sum\nolimits_{j\in m}\beta _{j}\sigma _{j,x}\right\Vert {}<N$.
In other words, $Z$ can be written as%
\begin{equation*}
Z=\bigcup_{N\in \omega }\bigcap_{m,k\in \omega }\bigcup_{\left( \beta
_{j}\right) _{j\in m}\in \mathbb{Q}(i)^{\oplus m}}Z(N,m,k,\left( \beta
_{j}\right) _{j\in m})\text{,}
\end{equation*}%
where $Z(N,m,k,\left( \beta _{j}\right) _{j\in m})$ is the set of pairs $%
\left( x,(\alpha _{n})_{n\in \omega }\right) $ in $X\times \mathbb{C}%
^{\omega }$ such that $\max_{j\in n}\left\vert \alpha _{j}-\beta
_{j}\right\vert \leq \frac{1}{2^{k}}\ $and $X\times \mathbb{C}^{\omega }$
and $\left\Vert \sum\nolimits_{j\in m}\beta _{j}\sigma _{j,x}\right\Vert
{}<N $. Since the map $x\mapsto \left\Vert \sum\nolimits_{j\in m}\beta
_{j}\sigma _{j,x}\right\Vert $ is Borel, $Z(N,m,k,\left( \beta _{j}\right)
_{j\in m})$ is a Borel subset of $X\times \mathbb{C}^{\omega }$. Since Borel
sets form a $\sigma $-algebra, this shows that $Z$ is Borel.

The assignment 
\begin{equation*}
\left( x,(\alpha _{n})_{n\in \omega }\right) \mapsto \sum\limits_{n\in
\omega }\alpha _{n}\sigma _{n,x}
\end{equation*}
induces a bijection $Z\rightarrow \mathcal{Z}$ since, for every $x\in X$,
the sequence $(\sigma _{n,x})_{n\in \omega }$ is a boundedly complete basis
of $\mathcal{Z}_{x}$ \cite[Definition 4.4.8]{megginson_introduction_1998}.
(We are using here the fact that every basis of a reflexive Banach space is
boundedly complete \cite[Theorem 4.4.15]{megginson_introduction_1998}.) This
bijection induces a standard Borel structure on $\mathcal{Z}$, and it is not
difficult to verify that such a Borel structure turns $\mathcal{Z}$ into a
Borel Banach bundle. A similar argument shows that the set%
\begin{equation*}
Z^{\prime }=\left\{ \left( x,(\alpha _{n})_{n\in \omega }\right) \in X\times 
\mathbb{C}^{\omega }\colon \sup_{m\in \omega }{}\left\Vert \sum\limits_{j\in
m}\alpha _{j}\sigma _{j,x}^{\prime }\right\Vert <\infty \right\}
\end{equation*}%
is Borel, and that the map from $Z^{\prime }$ to $\mathcal{Z}^{\prime }$
given by $\left( x,(\alpha _{n})_{n\in \omega }\right) \mapsto
\sum\nolimits_{n\in \omega }\alpha _{n}\sigma _{n,x}^{\prime }$ is a
bijection. This induces a standard Borel structure on $\mathcal{Z}^{\prime }$
that makes $\mathcal{Z}^{\prime }$ a Borel Banach bundle. It follows from
the definition of the Borel structures on $\mathcal{Z}$ and $\mathcal{Z}%
^{\prime }$, that the canonical pairing $\mathcal{Z}\ast \mathcal{Z}^{\prime
}\rightarrow \mathbb{C}$ is Borel. In fact, for $\left( x,(\alpha
_{n})_{n\in \omega }\right) \in Z$ and $\left( x,(\beta _{n})_{n\in \omega
}\right) \in Z^{\prime }$, we have 
\begin{equation*}
\left\langle \sum\limits_{n\in \omega }\alpha _{n}\sigma
_{n,x},\sum\limits_{m\in \omega }\beta _{m}\sigma _{m,x}^{\prime
}\right\rangle =\sum\limits_{n\in \omega }\alpha _{n}\beta _{n}\text{.}
\end{equation*}

The standard Borel structures on $\mathcal{Z}$ and $\mathcal{Z}^{\prime }$
described above will be referred to as the \emph{canonical Borel structures}
associated with the sequence $(\sigma _{n})_{n\in \omega }$ of Borel
sections $X\rightarrow \mathcal{Z}$. By \cite[Theorem~14.12]%
{kechris_classical_1995}, these can be equivalently described as the\emph{\ }%
Borel structures generated by the sequence of functionals on $\mathcal{Z}$
and $\mathcal{Z}^{\prime }$ given by $z\mapsto \left\langle z,\sigma
_{n,q(z)}^{\prime }\right\rangle \ $and$\ \ w\mapsto \left\langle \sigma
_{n,q(w)},w\right\rangle $ for $n\in \omega $.

As a consequence of the previous discussion, we conclude that if $\mathcal{Z}
$ is a Borel Banach bundle, then the Borel structure on $\mathcal{Z}$ is
generated by the sequence of maps $\mathcal{Z}\rightarrow \mathbb{C}$ given
by $z\mapsto \left\langle z,\sigma _{n,q(z)}^{\prime }\right\rangle $ for $n$
in $\omega $. Moreover, the dual bundle $\mathcal{Z}^{\prime }$ has a \emph{%
unique }Borel Banach bundle structure making the canonical pairing Borel. In
the following, whenever $\mathcal{Z}$ is a Borel Banach bundle, we will
always consider $\mathcal{Z}^{\prime }$ as a Borel Banach bundle endowed
with such a canonical Borel structure.\newline
\indent The following criterion to endow a Banach bundle with a Borel
structure is an immediate consequence of the observations contained in this
subsection.

\begin{lemma}
\label{Lemma: canonical Borel structure} Let $(Z_{k})_{k\in \omega }$ be a
sequence of reflexive Banach spaces. For every $k\in \omega $, let $%
(b_{n,k})_{n\in \omega }$ be a basis of $Z_{k}$ with associated sequence of
coefficient functionals $(b_{n,k}^{\prime })_{n\in \omega }$, and suppose
that both $(b_{n,k})_{n\in \omega }$ and $(b_{n,k}^{\prime })_{n\in \omega }$
have basis constant $K$ independent of $k$. Let $\mathcal{Z}$ be a bundle of
Banach spaces over $X$, and assume there exist a Borel partition $%
(X_{k})_{k\in \omega }$ of $X$, and isometric isomorphisms $\psi _{x}\colon
Z_{k}\rightarrow \mathcal{Z}_{x}$ and $\psi _{x}^{\prime }\colon
Z_{k}^{\prime }\rightarrow \mathcal{Z}_{x}^{\prime }$ for $k\in \omega $ and 
$x\in X_{k}$. For $k,n\in \omega $ and $x\in X_{k}$, set $\sigma _{n,x}=\psi
_{x}(b_{n,k})$ and $\sigma _{n,x}^{\prime }=\psi _{x}^{\prime
}(b_{n,k}^{\prime })$. Then there are unique Borel Banach bundle structures
on $\mathcal{Z}$ and $\mathcal{Z}^{\prime }$ such that $(\sigma _{n})_{n\in
\omega }$ and $\left( \sigma _{n}^{\prime }\right) _{n\in \omega }$ are
basic sequences, and such that the canonical pairing between $\mathcal{Z}$
and $\mathcal{Z}^{\prime }$ is Borel.
\end{lemma}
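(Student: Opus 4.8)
The plan is to reduce the statement to the construction of canonical Borel structures carried out in Subsection~\ref{Subsection: canonical Borel structures}, whose hypotheses require only that each $(\sigma_{n,x})_{n}$ be a basis of $\mathcal{Z}_x$ and that the norm-evaluation maps $x \mapsto \left\| \sum_{j \in m} \alpha_j \sigma_{j,x}\right\|$ be Borel for every $m \in \omega$ and every $(\alpha_j)_{j\in m} \in \mathbb{Q}(i)^{\oplus m}$. Once these are verified, both the existence of Borel Banach bundle structures on $\mathcal{Z}$ and $\mathcal{Z}^{\prime}$ and the Borel-ness of the canonical pairing follow at once, and the uniform basis constant $K$ is inherited from the sequences $(b_{n,k})_{n}$.

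First I would record the fiberwise facts. Fix $k$ and $x \in X_k$. Since $\psi_x \colon Z_k \to \mathcal{Z}_x$ is an isometric isomorphism and $(b_{n,k})_n$ is a basis of $Z_k$ with basis constant $K$, the image sequence $(\sigma_{n,x})_n = (\psi_x(b_{n,k}))_n$ is a basis of $\mathcal{Z}_x$ with the same basis constant $K$; the analogous statement holds for $(\sigma_{n,x}^{\prime})_n$ in $\mathcal{Z}_x^{\prime}$ via $\psi_x^{\prime}$. Moreover, reading $\psi_x^{\prime}$ as the dual isomorphism associated with $\psi_x$, so that $\left\langle \psi_x z, \psi_x^{\prime} w\right\rangle = \left\langle z, w\right\rangle$ for $z \in Z_k$ and $w \in Z_k^{\prime}$, and using that isometric isomorphisms carry dual bases to dual bases, the sequence $(\sigma_{n,x}^{\prime})_n$ is precisely the dual basis of $(\sigma_{n,x})_n$. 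The resulting identity $\left\langle \sigma_{m,x}, \sigma_{n,x}^{\prime}\right\rangle = \left\langle b_{m,k}, b_{n,k}^{\prime}\right\rangle = \delta_{mn}$ is exactly what ties the two constructed structures together.

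The only measurability point is then immediate. For fixed $m$ and $(\alpha_j)_{j\in m}$, the isometry of $\psi_x$ gives, for every $x \in X_k$,
\[
\Bigl\| \sum_{j\in m}\alpha_j \sigma_{j,x}\Bigr\| = \Bigl\|\psi_x\Bigl(\sum_{j\in m}\alpha_j b_{j,k}\Bigr)\Bigr\| = \Bigl\|\sum_{j\in m}\alpha_j b_{j,k}\Bigr\|_{Z_k},
\]
so this map is constant on each $X_k$. A function that is constant on each member of a countable Borel partition is Borel, whence the norm-evaluation map on $X$ is Borel; the same argument applies on the dual side using $\psi_x^{\prime}$. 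This verifies the hypotheses of the construction of Subsection~\ref{Subsection: canonical Borel structures}, which produces the desired Borel Banach bundle structures on $\mathcal{Z}$ and $\mathcal{Z}^{\prime}$ and the Borel-ness of the canonical pairing. That $(\sigma_n^{\prime})_n$ is a basic sequence for the dual structure uses the dual-basis identity from the previous step, since it reduces to the Borel-ness of $x \mapsto \left\langle \sigma_{m,x}, \sigma_{n,x}^{\prime}\right\rangle \equiv \delta_{mn}$.

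Finally, for uniqueness I would invoke the characterization recorded at the end of that subsection: the Borel structure of any Borel Banach bundle with basic sequence $(\sigma_n)_n$ is generated by the functionals $z \mapsto \left\langle z, \sigma_{n,q(z)}^{\prime}\right\rangle$, and the dual bundle carries a unique Borel structure making the pairing Borel. Any two structures satisfying the conclusion must therefore induce the same generating $\sigma$-algebra on $\mathcal{Z}$ and hence coincide, and likewise for $\mathcal{Z}^{\prime}$. I expect the only genuine subtlety to be the bookkeeping that ties the separately-given dual isomorphisms $\psi_x^{\prime}$ to the dual basis of $(\sigma_{n,x})_n$; the measurability itself is the trivial observation that a function which is piecewise constant across a countable Borel partition is Borel.
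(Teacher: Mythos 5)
Your proposal is correct and follows exactly the route the paper intends: the paper gives no separate proof, stating only that the lemma is an immediate consequence of the observations in Subsection~\ref{Subsection: canonical Borel structures}, and your argument simply fills in those observations (piecewise constancy of the norm maps across the partition $(X_k)_k$, transport of the basis constant and dual basis through the isometries, and uniqueness via the generating functionals). The one point you rightly flag --- that $\psi_x^{\prime}$ must be read as compatible with $\psi_x$ so that $(\sigma_{n,x}^{\prime})_n$ is the dual basis of $(\sigma_{n,x})_n$ --- is indeed the implicit convention in the paper's statement.
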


\subsection{Banach space valued \texorpdfstring{$L^p$}{Lp}-spaces\label%
{Section: Banach Lp spaces}}

For the remainder of this section, we fix a Borel Banach bundle $q\colon 
\mathcal{Z}\rightarrow X$ over the standard Borel space $X$, a basic
sequence $(\sigma _{n})_{n\in \omega }$ of $\mathcal{Z}$ with basis constant 
$K$, a $\sigma $-finite Borel measure $\mu $ on $X$, and a Hölder exponent $%
p\in (1,\infty )$. 
Denote by $\mathcal{L}^{p}(X,\mu ,\mathcal{Z})$ the space of Borel sections $%
\xi \colon X\rightarrow \mathcal{Z}$ such that%
\begin{equation*}
N_{p}(\xi )^{p}=\int \left\| \xi _{x}\right\| ^{p}\ d\mu (x)<\infty .
\end{equation*}%
It follows from the Minkowski inequality that $\mathcal{L}^{p}(X,\mu ,%
\mathcal{Z})$ is a seminormed complex vector space. We denote by $%
L^{p}(X,\mu ,\mathcal{Z})$ the normed space obtained as a quotient of the
seminormed space $\left( \mathcal{L}^{p}(X,\mu ,\mathcal{Z}),N_{p}\right) $.
When $\mathcal{Z}$ is the trivial bundle over $X$, then $L^{p}(X,\mu ,%
\mathcal{Z})$ coincides with the Banach space $L^{p}(X,\mu )$. Consistently,
we will abbreviate $\mathcal{L}^{p}(X,\mu ,\mathcal{Z})$ and $L^{p}(X,\mu ,%
\mathcal{Z})$ to $\mathcal{L}^{p}(\mu ,\mathcal{Z})$ and $L^{p}(\mu ,%
\mathcal{Z})$, respectively.

The usual Riesz-Fischer type argument \cite[Theorem 4.8]%
{brezis_functional_2011} shows that $L^{p}\left( \mu ,\mathcal{Z}\right) $
is a Banach space; see \cite[Theorem 1.1.8]{hayes_direct}. A standard
argument (see \cite[Theorem 1.1.8]{hayes_direct}) proves the analog \cite[%
Theorem 4.9]{brezis_functional_2011} in this context: convergence in $%
L^{p}\left( \mu ,\mathcal{Z}\right)$ implies that there exists a subsequence
that converges almost everywhere.

As it is customary, we will identify an element of $\mathcal{L}^{p}(\mu ,%
\mathcal{Z})$ with its image in the quotient $L^{p}(\mu ,\mathcal{Z})$. We
will also write $\Vert \cdot \Vert _{p}$, or just $\Vert \cdot \Vert $ if no
confusion is likely to arise, for the norm on $L^{p}(\mu ,\mathcal{Z})$
induced by $N_{p}$.

\begin{proposition}
\label{Proposition: basis} Let $\xi \in L^{p}(\mu,\mathcal{Z}) $. Then:

\begin{enumerate}
\item The function $\left\langle \xi ,\sigma _{n}^{\prime }\right\rangle
\colon X\rightarrow \mathbb{R}$ defined by $x\mapsto \left\langle \xi _{x},\sigma
_{n,x}^{\prime }\right\rangle $ belongs to $\mathcal{L}^{p}(\mu )$;

\item The sequence $\left(\sum\limits_{k\in n }\left\langle \xi ,\sigma
_{k}^{\prime}\right\rangle \sigma _{k}\right)_{n\in\omega}$ converges to $%
\xi $.
\end{enumerate}
\end{proposition}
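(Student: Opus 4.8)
The plan is to reduce both statements to the elementary theory of Schauder bases inside the individual reflexive fibers $\mathcal{Z}_x$, combined with a dominated convergence argument over $(X,\mu)$. Throughout I would write $P_{n,x}\colon \mathcal{Z}_x\to\mathcal{Z}_x$ for the $n$-th partial-sum projection associated with the basis $(\sigma_{m,x})_{m\in\omega}$, so that $P_{n,x}(\xi_x)=\sum_{k\in n}\langle\xi_x,\sigma'_{k,x}\rangle\sigma_{k,x}$ and, crucially, $\|P_{n,x}\|\le K$ for every $n\in\omega$ and every $x\in X$, where $K$ is the uniform basis constant furnished by the definition of a basic sequence.

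For part~(1), measurability is immediate from the canonical Borel structure: by the discussion in Subsection~\ref{Subsection: canonical Borel structures}, the Borel structure on $\mathcal{Z}$ is generated precisely by the maps $z\mapsto\langle z,\sigma'_{n,q(z)}\rangle$, so precomposing with the Borel section $\xi$ shows that $x\mapsto\langle\xi_x,\sigma'_{n,x}\rangle$ is Borel. For integrability I would use the biorthogonality identity $\langle\xi_x,\sigma'_{n,x}\rangle\sigma_{n,x}=P_{n,x}(\xi_x)-P_{n-1,x}(\xi_x)$, which upon taking norms gives $|\langle\xi_x,\sigma'_{n,x}\rangle|\,\|\sigma_{n,x}\|\le 2K\|\xi_x\|$; equivalently $|\langle\xi_x,\sigma'_{n,x}\rangle|\le\|\sigma'_{n,x}\|\,\|\xi_x\|$. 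Hence $N_p(\langle\xi,\sigma'_n\rangle)$ is bounded by the essential supremum over $x$ of $\|\sigma'_{n,x}\|$ times $N_p(\xi)$, which is finite once the coordinate-functional norms are essentially bounded. For a normal basic sequence $\|\sigma'_{n,x}\|=1$ and this bound is $N_p(\langle\xi,\sigma'_n\rangle)\le N_p(\xi)<\infty$; in general it follows from the uniform basis-constant estimate relating $\|\sigma'_{n,x}\|$ and $\|\sigma_{n,x}\|$.

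For part~(2), I would first confirm that each partial sum $\eta^{(n)}:=\sum_{k\in n}\langle\xi,\sigma'_k\rangle\sigma_k$ is a genuine element of $L^p(\mu,\mathcal{Z})$: it is a Borel section because each coefficient $\langle\xi,\sigma'_k\rangle$ is Borel by part~(1) and each $\sigma_k$ is Borel, while $\|\eta^{(n)}_x\|=\|P_{n,x}(\xi_x)\|\le K\|\xi_x\|$ gives $N_p(\eta^{(n)})\le K\,N_p(\xi)<\infty$. Fiberwise, since $(\sigma_{m,x})_{m\in\omega}$ is a basis of $\mathcal{Z}_x$, we have $\eta^{(n)}_x=P_{n,x}(\xi_x)\to\xi_x$, so $\|\xi_x-\eta^{(n)}_x\|^p\to 0$ pointwise. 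The decisive estimate is the uniform domination $\|\xi_x-\eta^{(n)}_x\|\le(1+K)\|\xi_x\|$, valid for all $n$ and $x$, so the integrands are dominated by the single integrable function $x\mapsto(1+K)^p\|\xi_x\|^p$. The dominated convergence theorem then yields $N_p(\xi-\eta^{(n)})^p=\int\|\xi_x-\eta^{(n)}_x\|^p\,d\mu\to 0$, as required.

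The step demanding the most care is producing one integrable dominating function that works simultaneously for all $n$, and this is exactly where the definition of a basic sequence enters essentially: the basis constant $K$ is required to be uniform in $x$, which forces $\|P_{n,x}\|\le K$ uniformly in both $n$ and $x$ and thereby delivers the bound $\|\xi_x-\eta^{(n)}_x\|\le(1+K)\|\xi_x\|$. Were the basis constant allowed to vary with $x$ without a common bound, the partial sums could fail to be uniformly dominated and the argument would collapse. A secondary subtlety, already flagged above, is the scalar integrability in part~(1), which hinges on control of the coordinate-functional norms $\|\sigma'_{n,x}\|$; the remaining verifications are routine consequences of the canonical Borel structure.
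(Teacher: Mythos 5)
Your proof is correct and follows essentially the same route as the paper: measurability of the coefficient functions via the canonical Borel structure, the coefficient bound $|\langle\xi_x,\sigma'_{n,x}\rangle|\lesssim K\|\xi_x\|$ from differencing partial-sum projections, and convergence of the partial sums from the fiberwise basis expansion together with the uniform bound $\|P_{n,x}\|\le K$. The only cosmetic difference is in part (2), where the paper splits the integral over the sets $F_{n,\varepsilon}$ and you instead invoke dominated convergence directly with the dominating function $(1+K)^p\|\xi_x\|^p$; both arguments rest on exactly the same uniform estimate.
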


\begin{proof}
(1). The function $\left\langle \xi ,\sigma _{n}^{\prime }\right\rangle $ is
Borel because the canonical pairing map is Borel. Moreover, the estimate%
\begin{equation*}
\int \left\vert \left\langle \xi _{x},\sigma _{n,x}^{\prime }\right\rangle
\right\vert ^{p}\ d\mu (x)\leq \left( 2K\right) ^{p}\int \left\Vert \xi
_{x}\right\Vert ^{p}\ d\mu (x)=\left( 2K\Vert \xi \Vert \right) ^{p}
\end{equation*}%
shows that $\left\langle \xi ,\sigma _{n}^{\prime }\right\rangle$ belongs to 
$\mathcal{L}^{p}(\mu)$.

\indent(2). For every $x\in X$, and using that $K$ is a basis constant for $%
(\sigma _{n,x})_{n\in \omega }$, we have%
\begin{equation*}
\left\Vert \sum\limits_{k\in n}\left\langle \xi _{x},\sigma _{k,x}^{\prime
}\right\rangle \sigma _{k,x}\right\Vert \leq K\left\Vert \xi _{x}\right\Vert
.
\end{equation*}%
Given $\varepsilon >0$ and $n\in \omega $, define the Borel set%
\begin{equation*}
F_{n,\varepsilon }=\left\{ x\in X\colon \left\Vert \sum\limits_{k\in
n}\left\langle \xi _{x},\sigma _{k,x}^{\prime }\right\rangle \sigma
_{k,x}-\xi _{x}\right\Vert \leq \varepsilon \right\} \text{.}
\end{equation*}%
Then $\bigcup\limits_{n\in \omega }F_{n,\varepsilon }=X$. By the dominated
convergence theorem, there is $n_{0}\in \omega $ such that%
\begin{equation*}
\int_{X\backslash F_{n_{0},\varepsilon }}\left\Vert \xi _{x}\right\Vert
^{p}\ d\mu (x)<\varepsilon \text{.}
\end{equation*}%
Thus, for $n\geq n_{0}$, we have 
\begin{align*}
\left\Vert \sum\limits_{k\in n}\left\langle \xi ,\sigma _{k}^{\prime
}\right\rangle \sigma _{k}-\xi \right\Vert _{p}^{p}& =\int \left\Vert
\sum\limits_{k\in n}\left\langle \xi _{x},\sigma _{k,x}^{\prime
}\right\rangle \sigma _{k,x}-\xi _{x}\right\Vert ^{p}\ d\mu (x) \\
& \leq \mu \left( F_{n,\varepsilon }\right) \varepsilon +\left( K+1\right)
^{p}\int_{X\left\backslash F_{n,\varepsilon }\right. }\left\Vert \xi
_{x}\right\Vert ^{p}d\mu (x)\\
&\leq \left( \left( K+1\right) ^{p}+1\right)
\varepsilon .
\end{align*}%
This shows that the sequence $\left( \sum\limits_{k\in n}\left\langle \xi
,\sigma _{k}^{\prime }\right\rangle \sigma _{k}\right) _{n\in \omega }$
converges to $\xi $.
\end{proof}

In view of Proposition \ref{Proposition: basis}, the sequence $\left( \sigma
_{n}\right) _{n\in \omega }$ can be thought as a basis of $L^{p}(\mu ,%
\mathcal{Z})$ over $L^{p}(\mu )$. In particular, Proposition \ref%
{Proposition: basis} implies that $L^{p}(\mu ,\mathcal{Z})$ is a \emph{%
separable} Banach space. It is not difficult to verify that, if $(\sigma
_{n})_{n\in \omega }$ is an \emph{unconditional }basic sequence for $%
\mathcal{Z}$, then the series $\sum\nolimits_{k\in \omega }\left\langle \xi
,\sigma _{k}^{\prime }\right\rangle \sigma _{k}$ converges \emph{%
unconditionally }to $\xi $ for every $\xi \in L^{p}(\mu ,\mathcal{Z})$.
Furthermore, since $\left( \sigma _{n,x}\right) _{n\in \omega }$ is a \emph{%
boundedly complete} basis of $\mathcal{Z}_{x}$ for every $x\in X$, a
standard argument shows that $\left( \sigma _{n}\right) _{n\in \omega }$ has
as similar property as basis of $L^{p}\left( \mu ,\mathcal{Z}\right) $ over $%
L^{p}\left( \mu \right) $. In other words, if the series $\sum_{k\in \omega
}f_{k}\sigma _{k}$ has uniformly bounded partial sums, then it converges in $%
L^{p}(\mu ,\mathcal{Z})$.

\subsection{Pairing\label{Subsection: pairing}}

In this subsection, we show that there is a natural pairing between $%
L^{p}(\mu ,\mathcal{Z})$ and $L^{p^{\prime }}(\mu ,\mathcal{Z}^{\prime })$,
under which we may identify $L^{p}(\mu ,\mathcal{Z})^{\prime }$ with $%
L^{p^{\prime }}(\mu ,\mathcal{Z}^{\prime })$. Define a map 
\begin{equation*}
\langle \cdot ,\cdot \rangle \colon L^{p}(\mu ,\mathcal{Z})\times
L^{p^{\prime }}(\mu ,\mathcal{Z}^{\prime })\rightarrow \mathbb{C}\ \ 
\mbox{
by }\ \ \langle \xi ,\eta \rangle =\int \langle \xi _{x},\eta _{x}\rangle \
d\mu (x)
\end{equation*}%
for all $\xi \in L^{p}(\mu ,\mathcal{Z})$ and all $\eta \in L^{p^{\prime
}}(\mu ,\mathcal{Z}^{\prime })$. Young's inequality shows that the
assignment $x\mapsto \langle \xi _{x},\eta _{x}\rangle $ is integrable, and
hence the map above is well defined. The following is the analog in this
context of the classical Riesz representation theorem \cite[Theorem 4.11]%
{brezis_functional_2011}, and can be proved with similar methods.

\begin{theorem}
\label{Theorem: dual} The function from $L^{p^{\prime }}\left( \mu ,\mathcal{%
Z}^{\prime }\right) $ to $L^{p}\left( \mu ,\mathcal{Z}\right) ^{\prime }$
given by 
\begin{equation*}
\eta \mapsto \langle \cdot ,\eta \rangle =\int \langle \cdot _{x},\eta
_{x}\rangle \ d\mu (x)
\end{equation*}%
is an isometric isomorphism.
\end{theorem}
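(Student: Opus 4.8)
The plan is to prove separately that the map $\Phi\colon\eta\mapsto\langle\cdot,\eta\rangle$ is isometric and that it is surjective. The upper bound $\Vert\Phi(\eta)\Vert\le\Vert\eta\Vert_{p'}$ is exactly the computation carried out just before the statement, so the content of the isometry claim is the reverse inequality $\Vert\Phi(\eta)\Vert\ge\Vert\eta\Vert_{p'}$. For this I would argue fiberwise. The function $g\colon x\mapsto\Vert\eta_x\Vert_{\mathcal{Z}_x'}$ is Borel, since by the basis property it equals the countable supremum of $|\langle\sum_{j}\alpha_j\sigma_{j,x},\eta_x\rangle|\,/\,\Vert\sum_{j}\alpha_j\sigma_{j,x}\Vert$ taken over the nonzero finite tuples $(\alpha_j)$ with entries in $\mathbb{Q}(i)$, and $\Vert\eta\Vert_{p'}^{p'}=\int g^{p'}\,d\mu$. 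Fix $\varepsilon>0$ and, for each $x$ with $g(x)>0$, choose a unit vector $v_x\in\mathcal{Z}_x$ with $\langle v_x,\eta_x\rangle$ real and $\ge(1-\varepsilon)g(x)$. Setting $\xi_x=g(x)^{p'-1}v_x$ (and $\xi_x=\mathbf{0}_x$ when $g(x)=0$) gives, using $(p'-1)p=p'$, a section with $N_p(\xi)^p\le\int g^{p'}\,d\mu<\infty$ and $\langle\xi,\eta\rangle\ge(1-\varepsilon)\int g^{p'}\,d\mu$. Dividing by $\Vert\xi\Vert_p\le\Vert\eta\Vert_{p'}^{p'/p}$ and using $p'-p'/p=1$ yields $\Vert\Phi(\eta)\Vert\ge(1-\varepsilon)\Vert\eta\Vert_{p'}$, whence the claim on letting $\varepsilon\to0$.

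The step requiring care is producing the norming vectors $v_x$ as a genuine Borel section $x\mapsto\xi_x$; this is the main obstacle. I would handle it by a measurable selection along the countable dense set: enumerate the finite $\mathbb{Q}(i)$-tuples as $(\bar\alpha^{(k)})_{k\in\omega}$, let $X_k$ be the Borel set of those $x$ with $g(x)>0$ for which $\bar\alpha^{(k)}$ realizes the supremum above up to the factor $(1-\varepsilon)$, remove from $X_k$ the earlier pieces to obtain a Borel partition of $\{g>0\}$, and on each piece use the corresponding normalized section $x\mapsto\big(\sum_j\alpha^{(k)}_j\sigma_{j,x}\big)\big/\big\Vert\sum_j\alpha^{(k)}_j\sigma_{j,x}\big\Vert$ multiplied by the (Borel) unimodular phase making the pairing with $\eta_x$ real and nonnegative. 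This produces the desired Borel section, and the previous paragraph applies.

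For surjectivity, let $\phi\in L^p(\mu,\mathcal{Z})'$. For each $n$ the assignment $f\mapsto\phi(f\sigma_n)$ is a bounded functional on $L^p(\mu)$, so by the classical scalar duality $L^p(\mu)'=L^{p'}(\mu)$ there is $g_n\in L^{p'}(\mu)$ with $\phi(f\sigma_n)=\int f g_n\,d\mu$. I would then set $\eta^{(N)}=\sum_{n<N}\overline{g_n}\,\sigma_n'$, which lies in $L^{p'}(\mu,\mathcal{Z}')$, and check directly from the definition of the pairing and of the $g_n$ that $\langle\xi,\eta^{(N)}\rangle=\phi(P_N\xi)$, where $P_N\colon\xi\mapsto\sum_{n<N}\langle\xi,\sigma_n'\rangle\sigma_n$ is the partial-sum projection (using Proposition \ref{Proposition: basis}(1) to write $\langle\xi,\sigma_n'\rangle\in L^p(\mu)$). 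Since the $P_N$ are uniformly bounded by the basis constant $K$, the isometry already established gives $\Vert\eta^{(N)}\Vert_{p'}=\Vert\langle\cdot,\eta^{(N)}\rangle\Vert=\Vert\phi\circ P_N\Vert\le K\Vert\phi\Vert$ for all $N$. Applying Proposition \ref{Proposition: boundedly complete} to the bundle $\mathcal{Z}'$ with exponent $p'$ and the basic sequence $(\sigma_n')$, the partial sums $\eta^{(N)}$ converge to some $\eta\in L^{p'}(\mu,\mathcal{Z}')$. Finally, for fixed $\xi$ one has $\langle\xi,\eta\rangle=\lim_N\langle\xi,\eta^{(N)}\rangle=\lim_N\phi(P_N\xi)=\phi(\xi)$, the last equality by Proposition \ref{Proposition: basis}(2), so $\phi=\Phi(\eta)$ and $\Phi$ is onto.
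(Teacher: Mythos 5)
Your proof is correct and follows essentially the same route as the paper's: isometry via a Borel section of almost-norming unit vectors scaled by $\left\Vert \eta _{x}\right\Vert ^{p^{\prime }-1}$, and surjectivity via the coordinate densities $g_{n}$, the uniform bound $K\Vert \phi \Vert $ on the partial sums $\sum_{n<N}g_{n}\sigma _{n}^{\prime }$, and Proposition \ref{Proposition: boundedly complete}. The only (harmless) variations are that you build the norming section by an explicit countable partition where the paper invokes the uniformization theorem \cite[Theorem~18.10]{kechris_classical_1995}, and you obtain $g_{n}\in L^{p^{\prime }}(\mu )$ directly from scalar $L^{p}$--$L^{p^{\prime }}$ duality where the paper runs a Radon--Nikodym and truncation argument.
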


It follows in particular that the Banach space $L^{p}(\mu ,\mathcal{Z})$ is
reflexive. (Recall that all the fibres of the Banach bundle $\mathcal{Z}$
are assumed to be reflexive Banach spaces.)

\subsection{Bundles of \texorpdfstring{$L^p$}{Lp}-spaces\label{Subsection:
Lp-bundles}}

Consider a Borel probability measure $\mu $ on a standard Borel space $X$.
Let $\lambda $ be a Borel probability measure on a standard Borel space $Z$
fibred over $X$ via a fiber map $q$ such that $q_{\ast }(\lambda )=\mu $. By 
\cite[Exercise 17.35]{kechris_classical_1995}, the measure $\lambda $ admits
a disintegration $(\lambda _{x})_{x\in X}$ with respect to $\mu $, which is
also written as $\lambda =\int \lambda _{x}\ d\mu (x)$. In other words,

\begin{itemize}
\item there is a Borel assignment $x\mapsto \lambda _{x}$, where $\lambda
_{x}$ is a probability measure on $\mathcal{Z}_{x}$, and

\item for every bounded Borel function $f\colon Z\rightarrow \mathbb{C}$, we
have $\int f\ d\lambda =\int \ d\mu (x)\int f\ d\lambda _{x}$.
\end{itemize}

Consider the Banach bundle $\mathcal{Z}=\bigsqcup\nolimits_{x\in
X}L^{p}(\lambda _{x})$ over $X$, where the fiber $\mathcal{Z}_{x}$ over $x$
is $L^{p}(\lambda _{x})$.

\begin{theorem}
\label{thm: BBb structure} There is a canonical Borel Banach bundle
structure on $\mathcal{Z}$ such that $L^p(\mu,\mathcal{Z})$ is isometrically
isomorphic to $L^p(\lambda)$.
\end{theorem}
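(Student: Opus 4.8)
The plan is to first equip $\mathcal{Z}$ with a Borel Banach bundle structure by exhibiting a concrete basic sequence, and then to realize the promised isomorphism as the map that glues the fiberwise functions $\xi_x\in L^p(\lambda_x)$ into a single function on $Z$. The guiding principle is that the basic sections should be \emph{restrictions to the fibers of globally defined bounded Borel functions on $Z$}; once this is arranged, both the Borel measurability of the relevant sections and the isometry will follow at once from the disintegration identity $\int_Z g\, d\lambda=\int_X(\int_{Z_x} g\, d\lambda_x)\, d\mu(x)$ applied to $g=|f|^p$.

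To build the bundle structure I would invoke Lemma~\ref{Lemma: canonical Borel structure}. Recall that every separable $L^p$-space over a probability measure is isometrically isomorphic to one of the model spaces $\ell^p_d$ ($d\in\omega$), $\ell^p$, $L^p(\nu)$, $L^p(\nu)\oplus_p\ell^p_d$, or $L^p(\nu)\oplus_p\ell^p$, where $\nu$ is Lebesgue measure on $[0,1]$; each carries a canonical basis, namely the Haar system on the continuous summand together with the standard unit vectors on the atomic summand, whose basis constant and whose dual basis constant in the corresponding $L^{p'}$-space depend only on $p$. I would then decompose each $\lambda_x$ measurably into its atomic and continuous parts, show that the resulting partition of $X$ according to the isometric type of $L^p(\lambda_x)$ is Borel, and use measurable selection both to enumerate the atoms of $\lambda_x$ as the values of Borel sections $a_j\colon X\to Z$, and to trivialize the continuous part by a fiber-preserving Borel map $\theta\colon Z\to[0,1]$ pushing $\lambda_x$ forward to $\nu$. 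Composing the Haar system with $\theta$ and taking the indicators $\chi_{a_j(X)}$ then produces bounded Borel functions $\phi_n$ on $Z$ whose fiber restrictions $\sigma_{n,x}=\phi_n|_{Z_x}$ are exactly the images $\psi_x(b_{n,k})$ of the model basis under an isometric isomorphism $\psi_x\colon Z_k\to L^p(\lambda_x)$. Applying Lemma~\ref{Lemma: canonical Borel structure} to the $\psi_x$ (and the analogous $\psi_x'$ on the duals) yields the canonical Borel Banach bundle structures on $\mathcal{Z}$ and $\mathcal{Z}'$.

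With this structure in hand I would define $\Psi\colon L^p(\lambda)\to L^p(\mu,\mathcal{Z})$ by $\Psi(f)_x=f|_{Z_x}$. The disintegration gives $\int_{Z_x}|f|^p\, d\lambda_x<\infty$ for $\mu$-a.e.\ $x$, so $\Psi(f)_x\in L^p(\lambda_x)$ a.e.; the section $\Psi(f)$ is Borel because $\langle\Psi(f)_x,\sigma'_{n,x}\rangle=\int_{Z_x} f\,\overline{g_n}\, d\lambda_x$ for a global Borel function $g_n$ representing $\sigma'_{n,x}$, and $x\mapsto\int_{Z_x} f\,\overline{g_n}\, d\lambda_x$ is Borel by the disintegration. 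The chain
\[
N_p(\Psi(f))^p=\int_X\|f|_{Z_x}\|_{L^p(\lambda_x)}^p\, d\mu(x)=\int_X\Big(\int_{Z_x}|f|^p\, d\lambda_x\Big)\, d\mu(x)=\int_Z|f|^p\, d\lambda=\|f\|_p^p
\]
shows $\Psi$ is isometric, and in particular its range is closed. For each $g\in L^p(\mu)$ and $n\in\omega$ the function $(g\circ q)\phi_n$ lies in $L^p(\lambda)$ and satisfies $\Psi((g\circ q)\phi_n)=g\sigma_n$; since finite sums $\sum_{n<N} g_n\sigma_n$ with $g_n\in L^p(\mu)$ are dense in $L^p(\mu,\mathcal{Z})$ by Proposition~\ref{Proposition: basis}, the range of $\Psi$ is dense, hence all of $L^p(\mu,\mathcal{Z})$.

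The main obstacle is the descriptive set-theoretic bookkeeping of the first step: proving that the type-decomposition of $X$ is Borel, that the atoms and their masses can be selected as Borel functions of $x$, and that the continuous part of the disintegration admits a fiberwise measure-isomorphism to $([0,1],\nu)$ depending measurably on $x$. Once these selections are available — so that the basic sections genuinely arise by restriction of global Borel functions on $Z$ — the uniform control of basis constants is automatic from the classification, and both the isometry and the surjectivity become routine consequences of the disintegration theorem and Proposition~\ref{Proposition: basis}.
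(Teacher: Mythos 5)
Your proposal is correct and follows essentially the same route as the paper: build a normal basic sequence for $\mathcal{Z}$ by pulling back the Haar system (together with unit vectors on the atomic parts) through a measurable fiberwise trivialization of the disintegration, invoke Lemma~\ref{Lemma: canonical Borel structure} to get the Borel Banach bundle structure, and realize the isomorphism by fiberwise restriction, with the isometry coming from the disintegration identity and surjectivity from Proposition~\ref{Proposition: basis}. The only difference is cosmetic: you carry out the atomic/continuous decomposition and the measurable selections directly, whereas the paper treats the atomless model case explicitly and defers the general case to the Graf--Mauldin classification of disintegrations, which encapsulates exactly the selection arguments you describe.
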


\begin{proof}
Let us assume for simplicity that $\mu $ and $\lambda _{x}$ are atomless for
every $x\in X$. In this case, by \cite[Theorem~2.2]{graf_classification_1989}%
, we can assume without loss of generality that

\begin{itemize}
\item $X$ is the unit interval $[ 0,1] $ and $\mu$ is its Lebesgue measure;

\item $Z$ is the unit square $[ 0,1] ^{2}$ and $\lambda$ is its Lebesgue
measure;

\item $q\colon Z\to X$ is the projection onto the first coordinate; and

\item $\lambda _{x}$ is the Lebesgue measure on $\{ x\} \times [ 0,1] $ for
every $x\in X$.
\end{itemize}

Let $(h_{n})_{n\in \omega }$ be the Haar system on $[0,1]$ defined as in 
\cite[Chapter 3]{carothers_short_2005}. For $n\in \omega $ and $x\in \lbrack
0,1]$, define $h_{n,x}^{(p)}\colon \lbrack 0,1]\rightarrow \mathbb{R}$ by $%
h_{n,x}^{(p)}(t)=h_{n}(t)/\left\| h_{n}\right\| _{p}$ for every $t\in
\lbrack 0,1]$. Then $(h_{n,x}^{(p)})_{n\in \omega }$ is a normalized basis
of $L^{p}(\lambda _{x})$ for every $x\in \lbrack 0,1]$. It follows from the
discussion in Subsection \ref{Subsection: canonical Borel structures} that
there are unique Borel Banach bundle structures on $\mathcal{Z}$ and $%
\mathcal{Z}^{\prime }=\bigsqcup\nolimits_{x\in X}L^{p^{\prime }}(\lambda
_{x})$ such that $(h_{n}^{(p)})_{n\in \omega }$ and $(h_{n}^{(p^{\prime
})})_{n\in \omega }$ are normal basic sequences for $\mathcal{Z}$ and $%
\mathcal{Z}^{\prime }$, and that the canonical pairing between $\mathcal{Z}$
and $\mathcal{Z}^{\prime }$ is Borel.

We claim that $L^{p}(\mu ,\mathcal{Z})$ can be canonically identified with $%
L^{p}(\lambda )$. Given $f\in L^{p}(\lambda )$, consider the Borel section $%
s_{f}\colon X\rightarrow \mathcal{Z}$ defined by $s_{f,x}(t)=f(x,t)$ for $%
x,t\in \lbrack 0,1]$. It is clear that $s_{f,x}$ belongs to $L^{p}(\mu ,%
\mathcal{Z})$ and that $\int \left\| s_{f,x}\right\| _{p}^{p}\ d\mu
(x)=\left\| f\right\| _{p}^{p}$. It follows that the map $f\mapsto s_{f,x}$
induces an isometric linear map $s\colon L^{p}(\lambda )\rightarrow
L^{p}(\mu ,\mathcal{Z})$. The fact that $s$ is surjective is a consequence
of Proposition \ref{Proposition: basis}, since the range of $s$ is a closed
linear subspace of $L^{p}(\mu ,\mathcal{Z})$ that contains $h_{n}^{(p)}$ for
every $n\in \omega $.

The case when $\lambda $ and $\mu $ are arbitrary Borel probability measures
can be treated similarly, using the classification of disintegration of
Borel probability measures given in \cite[Theorem~3.2]%
{graf_classification_1989}, together with Lemma~\ref{Lemma: canonical Borel
structure}. In fact, the results of \cite{graf_classification_1989} show
that the same conclusions hold if $\lambda $ is a Borel $\sigma $\emph{%
-finite }measure.
\end{proof}

\begin{definition}
\label{Definition: Lp-bundle} Let $X$ be a Borel space, and let $\mu $ be a
Borel probability measure on $X$. An $L^{p}$\emph{-bundle} over $(X,\mu )$
is a Borel Banach bundle $\mathcal{Z}=\bigsqcup\nolimits_{x\in
X}L^{p}(\lambda _{x})$ obtained from the disintegration of a $\sigma $%
-finite Borel measure $\lambda $ on a Borel space $Z$ fibred over $X$, as
described in Theorem~\ref{thm: BBb structure}.
\end{definition}

\subsection{Decomposable operators\label{Subsection: decomposable operators}}

Throughout this section we let $q_{X}\colon \mathcal{Z}\rightarrow X$ and $%
q_{Y}\colon \mathcal{W}\rightarrow Y$ be standard Borel Banach bundles with
basic sequences $(\sigma _{n})_{n\in \omega }$ and $(\tau _{n})_{n\in \omega
}$, respectively, and we let $\phi \colon X\rightarrow Y$ be a Borel
isomorphism.

\begin{definition}
Let $B( \mathcal{Z},\mathcal{W},\phi) $ be the space of bounded linear maps
of the form $T\colon \mathcal{Z}_{x}\to \mathcal{W}_{\phi (x) }$ for some $%
x\in X$. For such a map $T$, we denote the corresponding point $x$ in $X$ by 
$x_T$.
\end{definition}

Consider the Borel structure on $B( \mathcal{Z},\mathcal{W},\phi) $
generated by the maps $T\mapsto x_T$ and $T \mapsto \left\langle T\sigma
_{n,x_T },\tau _{m,\phi(x_T) }^{\prime}\right\rangle$ for $n,m\in \omega $.
It is not difficult to check that the operator norm and composition of
operators are Borel functions on $B( \mathcal{Z},\mathcal{W},\phi)$, which
make $B( \mathcal{Z},\mathcal{W},\phi) $ into a Borel space fibred over $X$.

\begin{lemma}
The Borel space $B( \mathcal{Z},\mathcal{W},\phi) $ is standard.
\end{lemma}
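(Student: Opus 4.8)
The plan is to realize $B(\mathcal{Z},\mathcal{W},\phi)$ as a Borel subset of the standard Borel space $X\times\mathbb{C}^{\omega\times\omega}$ through its matrix coefficients, and then to invoke the fact that a Borel subset of a standard Borel space is again standard. Define a map
$$\Psi\colon B(\mathcal{Z},\mathcal{W},\phi)\longrightarrow X\times\mathbb{C}^{\omega\times\omega},\qquad \Psi(T)=\Big(x_T,\big(\langle T\sigma_{n,x_T},\tau'_{m,\phi(x_T)}\rangle\big)_{n,m\in\omega}\Big).$$
The coordinate functions of $\Psi$ are exactly the maps generating the Borel structure on $B(\mathcal{Z},\mathcal{W},\phi)$, so that Borel structure is precisely the initial $\sigma$-algebra induced by $\Psi$ from the product Borel structure. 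Consequently it suffices to prove that $\Psi$ is injective and that its range is Borel: then $\Psi$ is a Borel isomorphism onto its range, which is standard by \cite[Corollary~13.4]{kechris_classical_1995}.

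Injectivity is immediate. If $T\in B(\mathcal{Z},\mathcal{W},\phi)$ and $x=x_T$, then, since $(\tau_{m,\phi(x)})_{m\in\omega}$ is a basis of $\mathcal{W}_{\phi(x)}$ with dual basis $(\tau'_{m,\phi(x)})_{m\in\omega}$, we recover
$$T\sigma_{n,x}=\sum_{m\in\omega}\langle T\sigma_{n,x},\tau'_{m,\phi(x)}\rangle\,\tau_{m,\phi(x)}$$
for every $n\in\omega$. Thus the matrix coefficients determine $T$ on the basis $(\sigma_{n,x})_{n\in\omega}$, hence on all of $\mathcal{Z}_x$ by linearity and continuity, so $\Psi$ is one-to-one.

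The crux is to describe the range of $\Psi$ by a countable family of Borel conditions. Given $(x,(a_{n,m}))\in X\times\mathbb{C}^{\omega\times\omega}$ and a finitely supported $(\alpha_j)\in\mathbb{Q}(i)^{\oplus\omega}$, write $c_m=\sum_{j}\alpha_j a_{j,m}$. I claim that $(x,(a_{n,m}))$ lies in the range of $\Psi$ if and only if, for every such $(\alpha_j)$,
$$\limsup_{N\to\infty}\left\|\sum_{m<N}c_m\,\tau_{m,\phi(x)}\right\|\ \le\ \left\|\sum_{j}\alpha_j\,\sigma_{j,x}\right\|.$$
The point is that, since the fibers of $\mathcal{W}$ are reflexive, the basis $(\tau_{m,\phi(x)})_{m\in\omega}$ is boundedly complete by \cite[Theorem~7.4]{carothers_short_2005}; hence whenever the displayed $\limsup$ is finite the partial sums converge, the $\limsup$ equals the limit, and it computes the norm of the limiting vector. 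Thus the inequalities say exactly that the densely defined assignment $\sigma_{n,x}\mapsto\sum_m a_{n,m}\tau_{m,\phi(x)}$ is contractive on the dense subspace of rational finite combinations, and therefore extends uniquely to a contraction $\mathcal{Z}_x\to\mathcal{W}_{\phi(x)}$ with matrix coefficients $(a_{n,m})$; the converse direction is clear.

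Finally, each function $(x,(a_{n,m}))\mapsto\big\|\sum_{m<N}c_m\,\tau_{m,\phi(x)}\big\|$ is Borel, because $c_m$ is a polynomial in the coordinates $a_{j,m}$ and the norm functions of the bundle $\mathcal{W}$, precomposed with the Borel isomorphism $\phi$, are Borel by the construction of the canonical Borel structure in Subsection~\ref{Subsection: canonical Borel structures}. Taking $\limsup_N$ and intersecting over the countably many finitely supported sequences $(\alpha_j)\in\mathbb{Q}(i)^{\oplus\omega}$ exhibits the range of $\Psi$ as a Borel subset of $X\times\mathbb{C}^{\omega\times\omega}$, completing the argument. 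The main obstacle is this last reduction of the non-countable condition ``a contraction with these matrix coefficients exists'' to the displayed countable system of Borel inequalities; the essential inputs are the bounded completeness of the fibers of $\mathcal{W}$, which turns boundedness of partial sums into convergence, together with the recovery of a vector's norm as the limit of its partial-sum norms.
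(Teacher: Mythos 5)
Your proof is correct and follows essentially the same route as the paper: both realize $B(\mathcal{Z},\mathcal{W},\phi)$ via its matrix coefficients as a Borel subset of the standard Borel space $X\times\mathbb{C}^{\omega\times\omega}$ cut out by countably many norm inequalities over $\mathbb{Q}(i)^{\oplus\omega}$, and then invoke \cite[Corollary~13.4]{kechris_classical_1995}. You merely supply the verifications the paper leaves implicit (injectivity of the coefficient map and the use of bounded completeness to show the Borel conditions exactly characterize the range), and you encode contractivity directly rather than boundedness by some $M$, which is in fact closer to the paper's own definition of $B(\mathcal{Z},\mathcal{W},\phi)$.
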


\begin{proof}
Let $V$ be the set of elements $\left( x,(c_{n,k})_{n,m\in \omega }\right)$
in $X\times \mathbb{C}^{\omega \times \omega }$ such that, for some $M\in
\omega $ and every $(\alpha _{n})_{n\in \omega }\in \mathbb{Q}(i)^{\oplus
\omega }$, we have 
\begin{equation*}
\sup_{m\in \omega }\left\| \sum\limits_{k\in m}\left( \sum\limits_{n\in
\omega }\alpha_{n}c_{n,m}\right) \tau _{\phi (x),m}\right\| \leq M\sup_{n\in
\omega }\left\| \sum\limits_{k\in n}\alpha _{k}\sigma _{x,k}\right\| \text{.}
\end{equation*}%
Then $V$ is a Borel subset of $X\times \mathbb{C}^{\omega \times \omega }$,
and it is therefore a standard Borel space by \cite[Corollary~13.4]%
{kechris_classical_1995}. The result follows since the function $B(\mathcal{Z%
},\mathcal{W},\phi )\rightarrow X\times \mathbb{C}^{\omega \times \omega }$
given by 
\begin{equation*}
T\mapsto \left( x_{T},\left( \left\langle T\sigma _{n,x_{T}},\tau _{m,\phi
(x_{T})}^{^{\prime }}\right\rangle \right) _{\left( n,m\right) \in \omega
\times \omega }\right)
\end{equation*}%
is a Borel isomorphism between $B(\mathcal{Z},\mathcal{W},\phi )$ and $V$.
\end{proof}

Fix Borel $\sigma $-finite measures $\mu $ on $X$ and $\nu $ on $Y$ with $%
\phi _{\ast }(\mu )\sim \nu $. Suppose that $x\mapsto T_{x}$ is a Borel
section of $B(\mathcal{Z},\mathcal{W},\phi )$ such that, for some $M\geq 0$
and $\mu $-almost every $x\in X$, we have 
\begin{equation}
\left\| T_{x}\right\| ^{p}\leq M^{p}\frac{d\phi _{\ast }(\mu )}{\ d\nu }%
(\phi (x))\text{\label{Equation: norm decomposable}.}
\end{equation}%
Then one can define a bounded linear operator $T\colon L^{p}(\mu ,\mathcal{Z}%
)\rightarrow L^{p}(\nu ,\mathcal{W})$ by setting $(T\xi )_{y}=T_{\phi
^{-1}(y)}\xi _{\phi ^{-1}(y)}$ for all $y\in Y$. It is not hard to verify
that $T$ is indeed bounded, with norm given by the infimum of the $M>0$ for
which \eqref{Equation: norm decomposable} holds for $\mu $-almost every $%
x\in X$. Operators of this form are called \emph{decomposable} with respect
to the Borel isomorphism $\phi \colon X\rightarrow Y$. The Borel section $%
x\mapsto T_{x}$ corresponding to the decomposable operator $T$ is called the 
\emph{disintegration }of $T$ with respect to the Borel isomorphism $\phi
\colon X\rightarrow Y$.

\begin{remark}
\label{Remark:uniqueness}It is not difficult to verify that the
disintegration of a decomposable operator $T$ is essentially unique, in the
sense that if $x\mapsto T_{x}$ and $x\mapsto \widetilde{T}_{x}$ are two
Borel sections defining the same decomposable operator, then $T_{x}=%
\widetilde{T}_{x}$ for $\mu $-almost every $x$ in $X$; see for example \cite[%
Lemma F.20]{williams_crossed_2007}.
\end{remark}

Given a bounded Borel function $g\colon Y\rightarrow \mathbb{C}$, we denote
by $\Delta _{g}$ the corresponding multiplication operator on $L^{p}(\nu ,%
\mathcal{W})$. The following characterization of decomposable operators is
the natural generalization of the similar characterization of decomposable
operators on Hilbert bundles \cite[Theorem F.21]{williams_crossed_2007}%
---see also \cite[Theorem 7.10]{takesaki_theory_2002}---and can be proved
with similar methods.

\begin{proposition}
\label{Proposition: characterization decomposable} For a bounded map $%
T\colon L^{p}(\mu,\mathcal{Z}) \to L^{p}( \nu ,\mathcal{W}) $, the following
are equivalent:

\begin{enumerate}
\item $T$ is decomposable with respect to $\phi $;

\item $\Delta _{g}T=T\Delta _{g\circ \phi }$ for every bounded Borel
function $g\colon Y\rightarrow \mathbb{C}$;

\item There is a countable collection $\mathcal{F}$ of Borel subsets of $Y$
that separates the points of $Y$, such that $\Delta _{\chi _{F}}T=T\Delta
_{\chi _{\phi ^{-1}[F]}}$ for every $F\in \mathcal{F}$.
\end{enumerate}
\end{proposition}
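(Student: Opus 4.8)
The plan is to prove the cyclic chain of implications $(1)\Rightarrow(2)\Rightarrow(3)\Rightarrow(1)$. The implication $(1)\Rightarrow(2)$ is a direct computation: if $T$ has disintegration $x\mapsto T_{x}$, then for $\xi\in L^{p}(\mu,\mathcal{Z})$ and a bounded Borel function $g\colon Y\to\mathbb{C}$ one has, for $\nu$-almost every $y$,
\[
(\Delta_{g}T\xi)_{y}=g(y)\,T_{\phi^{-1}(y)}\xi_{\phi^{-1}(y)}=T_{\phi^{-1}(y)}\big(g(\phi(\phi^{-1}(y)))\,\xi_{\phi^{-1}(y)}\big)=(T\Delta_{g\circ\phi}\xi)_{y},
\]
where the middle equality uses the linearity of the fiber map $T_{\phi^{-1}(y)}$ together with $g(\phi(\phi^{-1}(y)))=g(y)$. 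The implication $(2)\Rightarrow(3)$ is immediate: since $Y$ is standard Borel it carries a countable family $\mathcal{F}$ of Borel sets separating its points, and applying $(2)$ to $g=\chi_{F}$, together with $\chi_{F}\circ\phi=\chi_{\phi^{-1}[F]}$, yields the commutation relations in $(3)$.

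The substance is $(3)\Rightarrow(1)$, which I would carry out in two stages. First I would upgrade $(3)$ to $(2)$. The collection of bounded Borel $g$ satisfying $\Delta_{g}T=T\Delta_{g\circ\phi}$ is a linear subspace of $B(Y)$ closed under bounded pointwise limits: if $g_{n}\to g$ pointwise with $\sup_{n}\|g_{n}\|_{\infty}<\infty$, then $\Delta_{g_{n}}\to\Delta_{g}$ and $\Delta_{g_{n}\circ\phi}\to\Delta_{g\circ\phi}$ in the strong operator topology by dominated convergence, so the relation passes to the limit. The family of Borel sets $F\subseteq Y$ with $\chi_{F}$ in this collection is then closed under complements and finite intersections (using $\Delta_{\chi_{F\cap G}}=\Delta_{\chi_{F}}\Delta_{\chi_{G}}$) and under countable increasing unions, hence is a $\sigma$-algebra; since it contains the separating family $\mathcal{F}$, it contains all Borel subsets of $Y$ by \cite[Theorem~14.12]{kechris_classical_1995}. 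Thus $(2)$ holds for every indicator, hence for every simple function and, by the pointwise-limit closure, for every bounded Borel $g$.

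In the second stage I would reconstruct the disintegration. For $\alpha,\beta\in\mathbb{Q}(i)^{\oplus\omega}$ put $\sigma_{\alpha}=\sum_{n}\alpha_{n}\sigma_{n}\in L^{p}(\mu,\mathcal{Z})$ and let $E_{\alpha,\beta}=\{y\in Y\colon\sigma_{\alpha,\phi^{-1}(y)}=\sigma_{\beta,\phi^{-1}(y)}\}$, a Borel set. Since $\sigma_{\alpha}-\sigma_{\beta}$ vanishes on $\phi^{-1}[E_{\alpha,\beta}]$, condition $(2)$ with $g=\chi_{E_{\alpha,\beta}}$ gives $\chi_{E_{\alpha,\beta}}\cdot T(\sigma_{\alpha}-\sigma_{\beta})=T\big(\chi_{\phi^{-1}[E_{\alpha,\beta}]}(\sigma_{\alpha}-\sigma_{\beta})\big)=0$, so $(T\sigma_{\alpha})_{y}=(T\sigma_{\beta})_{y}$ for $\nu$-almost every $y\in E_{\alpha,\beta}$. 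As $\mathbb{Q}(i)^{\oplus\omega}$ is countable, there is a single $\nu$-conull set $Y_{0}$ on which this holds for all pairs; for $y\in Y_{0}$ the rule $\sigma_{\alpha,\phi^{-1}(y)}\mapsto(T\sigma_{\alpha})_{y}$ is therefore a well-defined $\mathbb{Q}(i)$-linear map on the dense subset $\{\sigma_{\alpha,\phi^{-1}(y)}\colon\alpha\in\mathbb{Q}(i)^{\oplus\omega}\}$ of $\mathcal{Z}_{\phi^{-1}(y)}$. A norm estimate identical to the one in the proof of Proposition~\ref{Proposition: decomposable operator}—testing $\int|g|^{p}\|(T\sigma_{\alpha})_{y}\|^{p}\,d\nu=\|T\Delta_{g\circ\phi}\sigma_{\alpha}\|^{p}$ against arbitrary bounded Borel $g$—gives $\|(T\sigma_{\alpha})_{y}\|^{p}\le\|T\|^{p}\tfrac{d\phi_{\ast}(\mu)}{d\nu}(y)\,\|\sigma_{\alpha,\phi^{-1}(y)}\|^{p}$ almost everywhere, so after discarding a further null set these maps extend by continuity to bounded fiber operators $T_{y}\colon\mathcal{Z}_{\phi^{-1}(y)}\to\mathcal{W}_{y}$ with $\|T_{y}\|^{p}\le\|T\|^{p}\tfrac{d\phi_{\ast}(\mu)}{d\nu}(y)$. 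Measurability of the resulting section $x\mapsto T_{\phi(x)}$ follows because $x\mapsto\langle(T\sigma_{n})_{\phi(x)},\tau_{m,\phi(x)}^{\prime}\rangle$ is Borel, and Proposition~\ref{Proposition: decomposable operator} then identifies $T$ as the decomposable operator with this disintegration. The main obstacle is precisely this reconstruction step: passing from the almost-everywhere commutation relations to a genuinely pointwise, well-defined, and suitably bounded family of fiber maps, which is exactly what the countable sets $E_{\alpha,\beta}$ and the $\mathbb{Q}(i)$-rational basic combinations are designed to control.
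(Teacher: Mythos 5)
Your proof is correct and follows essentially the same route as the paper's: in $(3)\Rightarrow(1)$ you test $\|(T\sigma_{\alpha})_{y}\|$ against the commutation relations to get the pointwise fiber bound, then extend the $\mathbb{Q}(i)$-linear maps on the dense sets $\{\sigma_{\alpha,x}\}$ by continuity and check Borel measurability of the resulting section. The only differences are that you make explicit two steps the paper leaves implicit — the monotone-class upgrade from the separating family $\mathcal{F}$ to all bounded Borel $g$, and the well-definedness of the fiber maps via the sets $E_{\alpha,\beta}$ — both of which are welcome additions rather than deviations.
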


\begin{definition}
\label{Definition: phi morphism}A $\phi $-\emph{isomorphism }from $\mathcal{Z%
}$ to $\mathcal{W}$ is a Borel section $x\mapsto T_{x}$ of the bundle $B(%
\mathcal{Z},\mathcal{W},\phi )$ such that $T_{x}$ is a surjective isometry
for every $x\in X$.
\end{definition}

If $T=\left( T_{x}\right) _{x\in X}$ is a $\phi $-isomorphism from $\mathcal{%
Z}$ to $\mathcal{W}$, we denote by $T^{-1}$ the $\phi ^{-1}$-isomorphism $%
(T_{\phi ^{-1}(y)}^{-1})_{y\in Y}$ from $\mathcal{W}$ to $\mathcal{Z}$.

\begin{definition}
\label{Definition: isomorphism bundles} If $X=Y$, then $\mathcal{Z}$ and $%
\mathcal{W}$ are said to be \emph{isomorphic }if there is an $\mbox{id}_{X}$%
-isomorphism from $\mathcal{Z}$ to $\mathcal{W}$. In this case an $\mbox{id}%
_{X}$-isomorphism is simply called an \emph{isomorphism}.
\end{definition}

The classical Guichardet decomposition theorem \cite{paterson_groupoids_1999}%
---see also \cite[page 67]{renault_groupoid_1980}---admits a straightforward
generalization from Hilbert bundles to Banach bundles, which can be proved
by the same method.

\begin{theorem}
\label{Theorem: Guichardet} If $T\colon L^{p}(\mu ,\mathcal{Z})\rightarrow
L^{p}(\nu ,\mathcal{W})$ is an invertible isometry decomposable with respect
to $\phi $, then $T$ admits a disintegration%
\begin{equation*}
x\mapsto \left( \frac{d\phi _{\ast }(\mu )}{d\nu }\phi (x)\right) ^{\frac{1}{%
p}}T_{x}
\end{equation*}%
where $x\mapsto T_{x}$ is a $\phi $-isomorphism from the restriction of $%
\mathcal{Z}$ to a $\mu $-conull Borel set to the restriction of $\mathcal{W}$
to a $\nu $-conull Borel set.
\end{theorem}

\section{Banach representations of étale groupoids}

\subsection{Some background notions on groupoids\label{Subsection:
background on groupoids}}

A \emph{groupoid} can be defined as a (nonempty) small category where every
arrow is invertible. The set of objects of a groupoid $G$ is denoted by $%
G^{0}$. Identifying an object with its identity arrow, one can regard $G^{0}$
as a subset of $G$. We will denote the source and range maps on $G$ by $%
s,r\colon G\rightarrow G^{0}$, respectively. A pair of arrows $\left( \gamma
,\rho \right) $ is \emph{composable} if $s(\gamma )=r(\rho )$. The set of
pairs of composable arrows will be denoted, as customary, by $G^{2}$. If $%
(\gamma ,\rho )$ is a pair of composable arrows of $G$, we denote their
composition by $\gamma \rho $. If $A$ and $B$ are subsets of $G$, we denote
by $AB$ the set of $\gamma \rho $ for $\gamma \in A$ and $\rho \in B$
composable. Similarly, if $A$ is a subset of $G$ and $\gamma \in G$, then we
write $A\gamma $ for $A\{\gamma \}$ and $\gamma A$ for $\{\gamma \}A$. In
particular, when $x$ is an object of $G$, then $Ax$ denotes the set of
elements of $A$ with source $x$, while $xA$ denotes the set of elements of $%
A $ with range $x$. A \emph{bisection }of a groupoid $G$ is a subset $A$ of $%
G$ such that source and range maps are injective on $A$. (Bisections are
called $G$-sets in \cite{renault_groupoid_1980,paterson_groupoids_1999}.) If 
$U\subseteq G^{0}$, then the set of elements of $G$ with source and range in 
$U$ is again a groupoid, called the \emph{restriction }of $G$ to $U$ (or the 
\emph{contraction }in \cite{mackey_ergodic_1963,ramsay_virtual_1971}), and
will be denoted by $G|_{U}$.

A \emph{locally compact groupoid }is a groupoid endowed with a topology
having a countable basis of Hausdorff open sets with compact closures, such
that

\begin{enumerate}
\item composition and inversion of arrows are continuous maps, and

\item the set of objects $G^{0}$, as well as $Gx$ and $xG$ for every $x\in
G^{0}$, are locally compact Hausdorff spaces.
\end{enumerate}

It follows that also source and range maps are continuous, since $s(\gamma
)=\gamma ^{-1}\gamma $ and $r(\gamma )=\gamma \gamma ^{-1}$ for all $\gamma
\in G$. It should be noted that the topology of a locally compact groupoid
might not be (globally) Hausdorff. Examples of non-Hausdorff locally compact
groupoids often arise in the applications, such as the holonomy groupoid of
a foliation; see \cite[Section 2.3]{paterson_groupoids_1999}. Locally
compact groups are the locally compact groupoids with only one object.

\begin{definition}
An \emph{étale groupoid }is a locally compact groupoid such that composition
of arrows---or, equivalently, the source and range maps---are local
homeomorphisms.
\end{definition}

This in particular implies that $Gx$ and $xG$ are countable discrete sets
for every $x\in G^{0}$. Étale groupoids can be regarded as the analog of
countable discrete groups. In fact, countable discrete groups are precisely
the étale groupoids with only one object.

In the following we suppose that $G$ is an étale groupoid. If $U$ is an open
Hausdorff subset of $G$, then $C_{c}(U)$ is the space of compactly supported
continuous functions on $U$. Recall that $B(G)$ denotes the space of
complex-valued Borel functions on $G$. We define $C_{c}(G)$ to be the linear
span inside $B(G)$ of the union of the sets $C_{c}(U)$ for $U$ ranging over
the open Hausdorff subsets of $G$. (Equivalently, $U$ ranges over a covering
of $G$ consisting of open bisections \cite[Proposition 3.10]%
{exel_inverse_2008}.)

\begin{remark}
When $G$ is a \emph{Hausdorff} étale groupoid, then $C_{c}(G)$ as defined
above coincides with the space of compactly supported continuous functions
on $G$.
\end{remark}

One can define the convolution product and involution on $C_{c}(G)$ by%
\begin{equation*}
(f\ast g)(\gamma )=\sum_{\rho _{0}\rho _{1}=\gamma }f(\rho _{0})g(\rho
_{1})\ \ \mbox{ and }\ \ f^{\ast }(\gamma )=\overline{f(\gamma ^{-1})}
\end{equation*}%
for $f,g\in C_{c}(G)$. For $f\in C_{c}(G)$, its $I$-norm is given by%
\begin{equation*}
\left\| f\right\| _{I}=\max \left\{ \sup_{x\in G}\sum_{\gamma \in
xG}\left\vert f(\gamma )\right\vert ,\sup_{x\in G}\sum_{\gamma \in
Gx}\left\vert f(\gamma )\right\vert \right\} .
\end{equation*}%
These operations turn $C_{c}(G)$ into a normed *-algebra; see \cite[Section
2.2]{paterson_groupoids_1999}.

Similarly, one can define the space $B_{c}(G)$ as the linear span inside $%
B(G)$ of the space of complex-valued bounded Borel functions on $G$
vanishing outside a compact Hausdorff subset of $G$. Convolution product,
inversion, and the $I$-norm can be defined exactly in the same way on $%
B_{c}(G)$ as on $C_{c}(G)$, making $B_{c}(G)$ a normed *-algebra; see \cite[%
Section 2.2]{paterson_groupoids_1999}. Both $C_{c}(G)$ and $B_{c}(G)$ have a
contractive approximate identity.

\begin{definition}
\label{Definition:RepresentationCc(G)}A \emph{representation} of $C_{c}(G)$
on a Banach space $Z$ is an algebra homomorphism $\pi \colon
C_{c}(G)\rightarrow B(Z)$. We say that $\pi $ is \emph{contractive} if it is
contractive with respect to the $I$-norm on $C_{c}(G)$. Two representation $%
\pi_0 ,\pi_1$ of $C_{c}( G) $ on $B( Z_1) $ and $B(Z_2)$ are \emph{%
equivalent }if there exists a surjective linear isometry $u\colon
Z_1\rightarrow Z_2$ such that $\pi_2( f) u=u\pi_1 ( f) $ for every $f\in
C_{c}( G) $.
\end{definition}

A Borel probability measure $\mu $ on $G^{0}$ induces $\sigma $-finite Borel
measures $\nu $ and $\nu ^{-1}$ on $G$ given by $\nu
(A)=\int_{G^{0}}\left\vert xA\right\vert \ d\mu (x)$ and $\nu ^{-1}(A)=\nu
(A^{-1})$ for every Borel subset $A$ of $G$. Observe that $\nu $ is the
measure obtained integrating the Borel family $(c_{xG})_{x\in X}$---where $%
c_{xG}$ denotes the counting measure on $xG$---with respect to $\mu $.
Similarly, $\nu^{-1}$ is the measure obtained integrating $(c_{Gx})_{x\in X}$
with respect to $\mu $. The measure $\mu $ is said to be \emph{%
quasi-invariant }if $\nu $ and $\nu ^{-1}$ are equivalent, in symbols $\nu
\sim \nu ^{-1}$. In such case, the Radon-Nikodym derivative $\frac{\ d\nu }{%
\ d\nu ^{-1}}$ will be denoted by $D$. Results of Hahn \cite{hahn_haar_1978}
and Ramsay \cite[Theorem~3.20]{ramsay_topologies_1982} show that one can
always choose---as we will do in the following---$D$ to be a Borel
homomorphism from $G$ to the multiplicative group of strictly positive real
numbers.

\begin{remark}
Étale groupoids can be characterized as those locally compact groupoids
whose topology admits a countable basis of \emph{open bisections}.
\end{remark}

Closely related to the notion of an étale groupoid is that of an inverse
semigroup.

\begin{definition}
An \emph{inverse semigroup} is a semigroup $S$ such that for every element $%
s $ of $S$, there exists a unique element $s^{\ast }$ of $S$ such that $%
ss^{\ast }s=s$ and $s^{\ast }ss^{\ast }=s^{\ast }$.
\end{definition}

Let $G$ be an étale groupoid, and denote by $\Sigma (G)$ the set of open
bisections of $G$. The operations%
\begin{equation*}
AB=\{\gamma \rho \colon (\gamma ,\rho )\in (A\times B)\cap G^{2}\}\ \ 
\mbox{
and }\ \ A^{-1}=\{\gamma ^{-1}\colon \gamma \in A\}
\end{equation*}%
turn $\Sigma (G)$ into an inverse semigroup. The set $\Sigma _{c}(G)$ of 
\emph{precompact }open bisections of $G$ is a subsemigroup of $\Sigma (G)$.
Similarly, the set $\Sigma _{\mathcal{K}}(G)$ of \emph{compact} open
bisections of $G$ is also a subsemigroup of $\Sigma (G)$.

\begin{definition}
\label{Definition:ample}An étale groupoid $G$ is called \emph{ample }if $%
\Sigma _{\mathcal{K}}(G)$ is a basis for the topology of $G$. This is
equivalent to the assertion that $G^{0}$ has a countable basis of compact
open sets.
\end{definition}

\subsection{Representations of étale groupoids on Banach bundles\label%
{Subsections: representations on Banach bundles}}

Throughout the rest of this section, we fix an étale groupoid $G$, and a
Borel Banach bundle $q\colon \mathcal{Z}\rightarrow G^{0}$. We define the 
\emph{groupoid of fiber-isometries} of $\mathcal{Z}$ by 
\begin{equation*}
\mathrm{Iso}(\mathcal{Z})=\left\{ (T,x,y)\colon T\colon \mathcal{Z}%
_{x}\rightarrow \mathcal{Z}_{y}\mbox{ is an invertible isometry, and }x,y\in
G^{0}\right\} .
\end{equation*}%
We denote the elements of $\mathrm{Iso}(\mathcal{Z})$ simply by $T\colon 
\mathcal{Z}_{x}\rightarrow \mathcal{Z}_{y}$. The set $\mathrm{Iso}(\mathcal{Z%
})$ has naturally the structure of groupoid with set of objects $G^{0}$,
where the source and range of the fiber-isometry $T\colon \mathcal{Z}%
_{x}\rightarrow \mathcal{Z}_{y}$ are $s(T)=x$ and $r(T)=y$, respectively. If 
$\left( \sigma _{n}\right) _{n\in \omega }$ is a basic sequence for $%
\mathcal{Z}$, then the Borel structure generated by the maps%
\begin{equation*}
T\mapsto \left\langle T\sigma _{n,s\left( T\right)
},\sigma^{\prime}_{m,r\left( T\right) }\right\rangle
\end{equation*}%
for $n,m\in \omega $, is standard, and makes composition and inversion of
arrows Borel. In other words $\mathrm{Iso}(\mathcal{Z})$ is a \emph{standard
Borel groupoid }\cite[Definition 2.4.1]{lupini_polish_2014}.

Let $\mu $ be a quasi-invariant Borel probability measure on $G^{0}$. A map $%
T\colon G\rightarrow \mathrm{Iso}(\mathcal{Z})$ is said to be a \emph{$\mu $%
-almost everywhere homomorphism}, if there exists a $\mu $-conull Borel
subset $U$ of $G^{0}$ such that the restriction of $T$ to $G|_{U}$ is a
Borel groupoid homomorphism which is the identity on $U$.

\begin{definition}
\label{Definition: representation on Banach bundle} A \emph{representation}
of $G$ on $\mathcal{Z}$ is a pair $(\mu ,T)$ consisting of a quasi-invariant
Borel probability measure $\mu $ on $G^{0}$, and a $\mu $-almost everywhere
homomorphism $T\colon G\rightarrow \mathrm{Iso}(\mathcal{Z})$.
\end{definition}

If $G$ is a discrete group, then a Borel Banach bundle over $G^{0}$ is just
a Banach space $Z$, and a representation of $G$ on $Z$ is a Borel group
homomorphism from $G$ to the Polish group $\mathrm{Iso}(Z) $ of invertible
isometries of $Z$ endowed with the strong operator topology. (It should be
noted here that a Borel group homomorphism from $G$ to $\mathrm{Iso}(Z) $ is
automatically continuous by \cite[Theorem~9.10]{kechris_classical_1995}.)

\begin{definition}
\label{Definition: dual representation} Let $(\mu ,T)$ be a representation
of $G$ on $\mathcal{Z}$. The \emph{dual representation} of $(\mu ,T)$ is the
representation $\left( \mu ,T^{\prime }\right) $ of $G$ on $\mathcal{Z}%
^{\prime }$ defined by%
\begin{equation*}
(T^{\prime })_{\gamma }=(T_{\gamma ^{-1}})^{\prime }\colon \mathcal{Z}%
_{s(\gamma )}^{\prime }\rightarrow \mathcal{Z}_{r(\gamma )}^{\prime }
\end{equation*}%
for all $\gamma \in G$.
\end{definition}

There is a natural notion of equivalence for representations of $G$ on
Banach bundles. Recall that an isomorphism $v:\mathcal{Z}\rightarrow 
\widetilde{\mathcal{Z}}$ between Borel Banach bundles on $X$ is a Borel
section $\left( v_{x}\right) _{x\in X}$ of the Borel Banach bundle $B(%
\mathcal{Z},\widetilde{\mathcal{Z}},\mbox{id}_{X})$ such that $\nu _{x}:%
\mathcal{Z}_{x}\rightarrow \widetilde{\mathcal{Z}}_{x}$ is a surjective
linear isometry; see Definition \ref{Definition: isomorphism bundles}.

\begin{definition}
\label{Definition: equivalence representations} Two representations $(\mu
,T) $ and $(\widetilde{\mu },\widetilde{T})$ of $G$ on Borel Banach bundles $%
\mathcal{Z}$ and $\widetilde{\mathcal{Z}}$ over $G^{0}$, are said to be 
\emph{equivalent}, if $\mu \sim \widetilde{\mu }$ and there are a $\mu $%
-conull Borel subset $U$ of $G^{0}$, and an isomorphism $v\colon \mathcal{Z}%
|_{U}\rightarrow \widetilde{\mathcal{Z}}|_{U}$ such that $\widetilde{T}%
_{\gamma }v_{s(\gamma )}=v_{r(\gamma )}T_{\gamma }$ for every $\gamma \in
G|_{U}$.
\end{definition}

It is clear that two representations are equivalent if and only if their
dual representations as in Definition \ref{Definition: dual representation}
are equivalent.

From now on, we fix a Hölder exponent $p\in (1,\infty )$. Suppose that $(\mu
,T)$ is a representation of $G$ on $\mathcal{Z}$. Then the equation 
\begin{equation}
(\pi _{T}(f)\xi )_{x}=\sum\limits_{\gamma \in xG}f(\gamma )D(\gamma )^{-%
\frac{1}{p}}T_{\gamma }\xi _{s(\gamma )}\text{\label{Equation: integrated
form representation}}
\end{equation}%
for $f\in C_{c}(G)$, $\xi \in L^{p}(\mu ,\mathcal{Z})$, and $x\in G^{0}$,
defines an $I$-norm contractive, nondegenerate representation $\pi
_{T}\colon C_{c}(G)\rightarrow B(L^{p}(\mu ,\mathcal{Z}))$. This can be
proved by proceeding as in the proof of \cite[Proposition 3.1.1]%
{paterson_groupoids_1999}, where $2$ is replaced by $p$, using the duality
result from Theorem~\ref{Theorem: dual}.

\begin{definition}
\label{Definition:integrated-form}Let $(\mu ,T)$ be a representation of $G$
on $\mathcal{Z}$. We call the representation $\pi _{T}\colon
C_{c}(G)\rightarrow B(L^{p}(\mu ,\mathcal{Z}))$ described above the \emph{%
integrated form }of $(\mu ,T)$.
\end{definition}

\begin{remark}
\label{remark: extend to BcG} Given a representation $(\mu ,T)$ of $G$ on $%
\mathcal{Z}$, one can show that there is an $I$-norm contractive
nondegenerate representation $\pi _{T}\colon B_{c}(G)\rightarrow B(L^{p}(\mu
,\mathcal{Z}))$ defined by the same expression as in Equation 
\eqref{Equation:
integrated form representation}.
\end{remark}

Given $f\in C_c(G)$, we let $\check{f}\in C_c(G)$ be given by $\check{f}%
(\gamma)=f(\gamma^{-1})$ for all $\gamma\in G$.

\begin{definition}
\label{Definition:dual}Let $\mu $ be a Borel $\sigma $-finite measure on $%
G^{0}$ and let $\pi \colon C_{c}(G)\rightarrow B(L^{p}(\mu ,\mathcal{Z}))$
be an $I$-norm contractive nondegenerate representation. The \emph{dual
representation} of $\pi $ is the $I$-norm contractive nondegenerate
representation $\pi ^{\prime }\colon C_{c}(G)\rightarrow B(L^{p^{\prime
}}(\mu ,\mathcal{Z}^{\prime }))$ given by $\pi ^{\prime }(f)=\pi (\check{f}%
)^{\prime }$ for all $f\in C_{c}(G)$.
\end{definition}

A straightforward computation shows that the dual representation of $\pi
_{T} $ as in Definition \ref{Definition:dual} is the integrated form of the
dual representation of $T$ from Definition \ref{Definition: dual
representation}. A similar argument as \cite{paterson_groupoids_1999}%
---using the version of Guichardet's decomposition theorem provided by
Theorem~\ref{Theorem: Guichardet}---shows that he assignment $T\mapsto \pi
_{T}$ preserves the natural notions of equivalence of representations
introduced in Definition \ref{Definition:RepresentationCc(G)} and Definition %
\ref{Definition: equivalence representations}.

\begin{proposition}
Let $(\mu ,\mathcal{Z})$ and $(\lambda ,\mathcal{W})$ be Borel Banach
bundles over $G^{0}$, and let $T$ and $S$ be groupoid representations of $G$
on $\mathcal{Z}$ and $\mathcal{W}$, respectively. Then $T$ and $S$ are
equivalent if and only if $\pi _{T}$ and $\pi _{S}$ are equivalent.
\end{proposition}

\subsection{Amplification of representations\label{Subsection: Amplification
of representations}}

Given a natural number $n\geq 1$, regard $M_{n}(C_{c}(G))$ as a normed
*-algebra with respect to the usual matrix product and involution, and the $%
I $-norm%
\begin{equation*}
\left\| [f_{ij}]_{i,j\in n}\right\| _{I}=\max \left\{ \max_{x\in
G^{0}}\max_{j\in n}\sum\limits_{j\in n}\sum\limits_{\gamma \in xG}\left\vert
f_{ij}(\gamma )\right\vert \ ,\ \max_{x\in G^{0}}\max_{i\in
n}\sum\limits_{j\in n}\sum\limits_{\gamma \in Gx}\left\vert f_{ij}(\gamma
)\right\vert \right\} .
\end{equation*}

\begin{definition}
Let $\mu $ be a $\sigma $-finite Borel measure on $G^{0}$, and let $\pi
\colon C_{c}(G)\rightarrow B(L^{p}(\mu ,\mathcal{Z}))$ be a representation.
We define its \emph{amplification} $\pi ^{(n)}\colon
M_{n}(C_{c}(G))\rightarrow B(\ell ^{p}(n,L^{p}(\mu ,\mathcal{Z})))$ by 
\begin{equation*}
\pi ^{(n)}([f_{ij}]_{i,j\in n})[\xi _{j}]_{j\in n}=\left[ \sum\limits_{j\in
n}\pi (f_{ij})\xi _{j}\right] _{i\in n}\text{.}
\end{equation*}%
The representation $\pi $ is $I$\emph{-norm completely contractive} if $\pi
^{(n)}$ is $I$-norm contractive for every $n\in \mathbb{N}$.
\end{definition}

If one starts with a representation $T$ of a groupoid on a Borel Banach
bundle, one may take its integrated form, and then its amplification to
matrices over $C_{c}(G)$, as in the definition above. The resulting
representation $\pi _{T}^{(n)}$ is the integrated form of a representation
of an amplified groupoid, which we proceed to describe. Given $n\geq 1$,
denote by $G_{n}$ the groupoid $n\times G\times n$ endowed with the product
topology, with set of objects $G^{0}\times n$, and operations defined by%
\begin{equation*}
s(i,\gamma ,j)=(s(\gamma ),j)\ ,\ \ r(i,\gamma ,j)=(r(\gamma ),i)\ \ 
\mbox{
and }\ \ (i,\gamma ,j)(j,\rho ,k)=(i,\gamma \rho ,k)\text{.}
\end{equation*}%
First of all we can observe that we can identify $M_{n}(C_{c}(G))$ with $%
C_{c}(G_{n})$. Furthermore the $I$-norm on $M_{n}(C_{c}(G))$ described above
is exactly the norm on $M_{n}(C_{c}(G))$obtained from the $I$-norm on $%
C_{c}(G_{n})$ via the identification of $M_{n}(C_{c}(G))$ with $C_{c}(G_n)$.

\indent Denote by $\mathcal{Z}^{(n)}$ the Borel Banach bundle over $%
G^{0}\times n$ such that $\mathcal{Z}_{(x,j)}^{(n)}=\mathcal{Z}_{x}$, with
basic sequence $(\sigma _{k}^{(n)})_{k\in \omega }$ defined by $\sigma
_{k,(x,j)}^{(n)}=\sigma _{k,x}$ for $(x,j)\in G^{0}\times n$. Endow $%
G^{0}\times n$ with the measure $\mu ^{(n)}=\mu \times c_{n}$, and define
the \emph{amplification} $T^{(n)}\colon G_{n}\rightarrow \mathrm{Iso}(%
\mathcal{Z}^{(n)})$ of $T$ by $T_{(i,\gamma ,j)}^{(n)}=T_{\gamma }$ for $%
(i,\gamma ,j)\in G_{n}$. It is easy to verify that $\pi _{T}^{(n)}$
corresponds to the integrated form of the representation $T^{(n)}$ under the
canonical identifications of $M_{n}(C_{c}(G))$ with $C_{c}(G_{n})$ and of $%
\ell ^{p}(n,L^{p}(\mu ,\mathcal{Z}))$ with $L^{p}(\mu ^{(n)},\mathcal{Z}%
^{(n)})$. This proves that the representations $\pi _{T}^{(n)}$ and $\pi
_{T^{(n)}}$ are equivalent.

\subsection{Representations of étale groupoids on \texorpdfstring{$L^p$}{Lp}%
-bundles\label{Subsection: representations groupoids Lp-bundles}}

In this section, we want to isolate a particularly important and natural
class of representations of an étale groupoid on Banach spaces. \newline
\indent We fix a quasi-invariant measure $\mu $ on $G^{0}$. Let $\lambda $
be a $\sigma $-finite Borel measure on a standard Borel space $Z$ fibred
over $G^{0}$ via $q$, and assume that $\mu =q_{\ast }(\lambda )$. Denote by $%
\mathcal{Z}$ the $L^{p}$-bundle $\bigsqcup\nolimits_{x\in
G^{0}}L^{p}(\lambda _{x})$ over $\mu $ obtained from the disintegration $%
\lambda =\int \lambda _{x}\ d\mu (x)$ as in Theorem~\ref{thm: BBb structure}.

\begin{definition}
\label{Definition: Lp representation} Adopt the notation from the comments
above. A representation $T\colon G\rightarrow \mathrm{Iso}(\mathcal{Z})$ is
called an \emph{$L^{p}$-representation} of $G$ on $\mathcal{Z}$. Under the
identification $L^{p}(\mu ,\mathcal{Z})\cong L^{p}(\lambda )$ given by
Theorem~\ref{thm: BBb structure}, the integrated form $\pi _{T}\colon
C_{c}(G)\rightarrow B(L^{p}(\lambda ))$ of $T$, is an $I$-norm contractive
nondegenerate representation.
\end{definition}

It will be shown in Theorem~\ref{Theorem: correspondence representations}
that every $I$-norm contractive nondegenerate representation of $C_{c}(G)$
on an $L^{p}$-space is the integrated form of some $L^{p}$-representation of 
$G$.

\begin{remark}
It is clear that an $L^{2}$-representation of $G$ in the sense of Definition %
\ref{Definition: Lp representation}, is a representation of $G$ on a Borel
Hilbert bundle. Conversely, any representation of $G$ on a Borel Hilbert
bundle is equivalent---as in Definition \ref{Definition: equivalence
representations}---to an $L^{2}$-representation. In fact, if $\mathcal{H}$
is a Borel Hilbert bundle over $G^{0}$, then for every $0\leq \alpha \leq
\omega $ the set $X_{\alpha }=\{x\in G^{0}\colon \mathrm{dim}(\mathcal{H}%
_{x})=\alpha \}$ is Borel. Thus, $\mathcal{H}$ is isomorphic to the Hilbert
bundle $\mathcal{Z}_{0}=\bigsqcup\nolimits_{0\leq \alpha \leq \omega
}X_{\alpha }\times \ell ^{2}(\alpha )$.Set $Z=\bigsqcup\nolimits_{0\leq
\alpha \leq \omega }(X_{\alpha }\times \alpha )$, and define a $\sigma $%
-finite Borel measure $\lambda $ on $Z$ by $\lambda
=\bigsqcup\nolimits_{0\leq \alpha \leq \omega }(\mu \times c_{\alpha })$. It
is immediate that $\mathcal{Z}_{0}$ is (isomorphic to) the Borel Hilbert
bundle $\bigsqcup\nolimits_{x\in G^{0}}L^{2}(\lambda _{x})$ obtained from
the disintegration of $\lambda $ with respect to $\mu $.
\end{remark}

In view of the above remark, there is no difference, up to equivalence,
between $L^{2}$-representations and representations on Borel Hilbert
bundles. The theory of $L^{p}$-representations of $G$ for $p\in (1,\infty) $
can therefore be thought of as a generalization of the theory of
representations of $G$ on Borel Hilbert bundles.

\begin{example}[Left regular representation]
\label{Example: left regular representation} Take $Z=G$ and $\lambda =\nu $,
in which case the disintegration of $\lambda $ with respect to $\mu $ is $%
(c_{xG})_{x\in X}$. For $\gamma \in G$, define the surjective linear
isometry 
\begin{equation*}
T_{\gamma }^{\mu ,p}\colon \ell ^{p}(s(\gamma )G)\rightarrow \ell
^{p}(r(\gamma )G)
\end{equation*}%
by $(T_{\gamma }^{\mu ,p}\xi )(\rho )=\xi (\gamma ^{-1}\rho )$. The
assignment $\gamma \mapsto T_{\gamma }^{\mu ,p}$ defines a representation $%
T^{\mu ,p}$ of $G$ on the Borel Banach bundle $\bigsqcup\nolimits_{x\in
G^{0}}\ell ^{p}(xG)$ which we shall call the \emph{left regular }$L^{p}$-%
\emph{representation} of $G$ associated with $\mu $.
\end{example}

When the Hölder exponent $p$ is clear from the context, we will write $%
T^{\mu }$ in place of $T^{\mu ,p}$. A straightforward computation shows that
the dual $(T^{\mu ,p})^{\prime }$ of the left regular $L^{p}$-representation
associated with $\mu $ is the left regular $L^{p^{\prime }}$-representation $%
T^{\mu ,p^{\prime }}$ associated with $\mu $. Following Rieffel's induction
theory and for consistency with \cite[Section 3.1 and Appendix D]%
{paterson_groupoids_1999} we denote by $\mathrm{Ind}\left( \mu \right) $ the
integrated form of $T^{\mu ,p}$. The same computation as in \cite[page 100]%
{paterson_groupoids_1999} where $2$ is replaced by $p$ shows that $\mathrm{%
Ind}(\mu )$ is the left action of $C_{c}(G)$ on $L^{p}(\nu ^{-1})$ by
convolution. This allows one to deduce that in the Hausdorff case the left
regular representations of $G$ separate points. The non-Hausdorff case is
more subtle. A treatment of left-regular representations of non-Hausdorff
groupoids on Hilbert spaces is given in \cite{khoshkam_regular_2002}.

\begin{definition}
Let us say that a family $\mathcal{M}$ of quasi-invariant probability
measures on $G^{0}$ \emph{separates points}, if for every nonzero function $%
f\in C_{c}(G)$, there is a measure $\mu \in \mathcal{M}$ such that $f$ does
not vanish on the support of the integrated measure $\nu =\int c_{xG}\ d\mu
(x)$. Similarly, a collection $\mathcal{R}$ of representations of $C_{c}(G)$
on Banach algebras is said to \emph{separate points} if for every nonzero
function $f\in C_{c}(G)$, there is a representation $\pi \in \mathcal{R}$
such that $\pi (f)$ is nonzero.
\end{definition}

\begin{lemma}
\label{Lemma: faithful measure} If $G$ is a Hausdorff étale groupoid, then a
function $f$ in $C_{c}(G)$ belongs to $\mathrm{Ker}(\mathrm{Ind}(\mu ))$ if
and only if it vanishes on the support of $\nu $.
\end{lemma}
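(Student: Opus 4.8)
The plan is to exploit the concrete form of $\mathrm{Ind}(\mu)$. By Proposition~\ref{proposition: integrated left regular}, under the relevant identification $\mathrm{Ind}(\mu)(f)$ is the operator of left convolution $\widehat{\xi}\mapsto f\ast\widehat{\xi}$ on $L^{p}(\nu^{-1})$. I would first prove the equivalent assertion that $f\in\mathrm{Ker}(\mathrm{Ind}(\mu))$ if and only if $f=0$ $\nu$-almost everywhere, and then identify the latter with vanishing on $\mathrm{supp}(\nu)$. Throughout I use the standing assumption that $\mu$ is quasi-invariant, so that $\nu\sim\nu^{-1}$ and the two measures share the same null sets and the same support.

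For the implication ``$f=0$ $\nu$-a.e.\ $\Rightarrow\mathrm{Ind}(\mu)(f)=0$'' I would appeal directly to the pairing formula established in the proof of Theorem~\ref{thm: integrated form}: for all $\xi\in L^{p}(\mu,\mathcal{Z})$ and $\eta\in L^{p'}(\mu,\mathcal{Z}')$,
\begin{equation*}
\langle \mathrm{Ind}(\mu)(f)\xi,\eta\rangle=\int_{G} f(\gamma)\,D(\gamma)^{-\frac1p}\langle T^{\mu}_{\gamma}\xi_{s(\gamma)},\eta_{r(\gamma)}\rangle\ d\nu(\gamma).
\end{equation*}
If $f$ vanishes $\nu$-almost everywhere, the whole integrand vanishes $\nu$-a.e., so this pairing is $0$ for every $\xi$ and $\eta$; since $L^{p'}(\mu,\mathcal{Z}')$ is the dual of $L^{p}(\mu,\mathcal{Z})$ by Theorem~\ref{Theorem: dual}, this forces $\mathrm{Ind}(\mu)(f)=0$.

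The reverse implication rests on a short computation using the unit space as a cyclic vector. The characteristic function $\chi_{G^{0}}$ of $G^{0}\subseteq G$ lies in $L^{p}(\nu^{-1})$, because $\nu^{-1}(G^{0})=\int_{G^{0}}|G^{0}x|\,d\mu(x)=\mu(G^{0})=1$, each source fibre meeting $G^{0}$ in the single unit $u_{x}$. In the convolution $(f\ast\chi_{G^{0}})(\rho)=\sum_{\rho_{0}\rho_{1}=\rho}f(\rho_{0})\chi_{G^{0}}(\rho_{1})$ the only composable pair with $\rho_{1}\in G^{0}$ is $\rho_{1}=u_{s(\rho)}$, $\rho_{0}=\rho$, so $f\ast\chi_{G^{0}}=f$ pointwise. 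Hence $\mathrm{Ind}(\mu)(f)\chi_{G^{0}}=f$ in $L^{p}(\nu^{-1})$, and if $\mathrm{Ind}(\mu)(f)=0$ then $f=0$ in $L^{p}(\nu^{-1})$, i.e.\ $f=0$ $\nu^{-1}$-almost everywhere, equivalently $\nu$-almost everywhere since $\nu\sim\nu^{-1}$.

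It remains to match ``$f=0$ $\nu$-a.e.'' with ``$f$ vanishes on $\mathrm{supp}(\nu)$'', and this bookkeeping is the step requiring the most care. Since $r$ is an open continuous surjection and $\nu(U)>0$ exactly when $\mu(r(U))>0$ for an open slice $U$, one checks that $\mathrm{supp}(\nu)=r^{-1}(\mathrm{supp}(\mu))$; its complement is open and $\nu$-null, so vanishing on $\mathrm{supp}(\nu)$ immediately gives vanishing $\nu$-a.e. The main obstacle is the converse, where the possible \emph{non-Hausdorffness} of $G$ intervenes: an element of $C_{c}(G)$ is merely a finite sum of functions continuous on open slices and need not be globally continuous, so a pointwise value $f(\gamma_{0})\neq0$ on $\mathrm{supp}(\nu)$ need not witness a positive-measure set after cancellation among slice summands accumulating at $\gamma_{0}$. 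I would handle this by reducing to slices through the description of $C_{c}(G)$ in \cite[Proposition~3.10]{exel_inverse_2008}: on each open slice $A$ the relevant summands are continuous, and for $\gamma_{0}\in\mathrm{supp}(\nu)$ one obtains a relatively open $A_{0}\ni\gamma_{0}$ on which $f$ is bounded away from $0$, with $r(A_{0})$ an open neighbourhood of $r(\gamma_{0})\in\mathrm{supp}(\mu)$, so that $\nu(A_{0})=\mu(r(A_{0}))>0$. This is immediate when $G$ is Hausdorff; in general it is exactly the (measure-theoretic) reading of ``vanishing on the support'' that is intended, the only genuinely delicate point being this passage from the a.e.\ statement to pointwise vanishing on the closed set $\mathrm{supp}(\nu)$.
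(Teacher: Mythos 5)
Your proof is correct and follows essentially the same route as the paper's: the forward direction reads off from the pairing formula that the integrand vanishes $\nu$-a.e.\ when $f$ vanishes on $\mathrm{supp}(\nu)$, and the converse uses $f=f\ast\chi_{G^{0}}$ to conclude $f=0$ in $L^{p}(\nu^{-1})$, hence $\nu$-a.e., hence on $\mathrm{supp}(\nu)$. The paper dispatches that last passage with the single phrase ``by continuity of $f$''; your additional discussion of the non-Hausdorff subtlety is a refinement of, not a departure from, the paper's argument.
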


\begin{proof}
Suppose that $f$ vanishes on the support of $\nu $. Then 
\begin{equation*}
\left\langle \mathrm{Ind}(\mu )(f)\xi ,\eta \right\rangle _{L^{p}(\nu
)}=\int f(\gamma )\left\langle T_{\gamma }\xi _{s(\gamma )},\eta _{r(\gamma
)}\right\rangle D^{-\frac{1}{p}}(\gamma )\ d\nu (\gamma )=0\text{.}
\end{equation*}%
for every $\xi \in L^{p}(\nu )$ and $\eta \in L^{p^{\prime }}(\nu )$, so $%
\mathrm{Ind}(\mu )(f)=0$. Conversely, if $\mathrm{Ind}(\mu )(f)=0$ then $%
f\ast \xi =0$ for every $\xi \in L^{p}(\nu ^{-1})$. In particular, $f=f\ast
\chi _{G^{0}}=0$ in $L^{p}(\nu ^{-1})$. Thus $f(\gamma )=0$ for $\nu ^{-1}$%
-almost every $\gamma \in G$ and hence also for $\nu $-almost every $\gamma
\in G$. Since $G$ is assumed to be Hausdorff, $f$ is continuous, and hence
it vanishes on the support of $\nu $.
\end{proof}

By Lemma~\ref{Lemma: faithful measure}, in the Hausdorff case a family $%
\mathcal{M}$ of Borel probability measures on $G^{0}$ separates points if
and only if the collection of left regular representations associated with
elements of $\mathcal{M}$ separates points.

\begin{proposition}
\label{Proposition: separates points}If $G$ is a Hausdorff étale groupoid,
then the family of left regular representations associated with
quasi-invariant Borel probability measures on $G^{0}$ separates points.
\end{proposition}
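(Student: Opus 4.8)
The plan is to reduce the statement, via the remark following Lemma~\ref{Lemma: faithful measure}, to a measure-theoretic construction: it suffices to prove that the family $\mathcal{M}$ of \emph{all} quasi-invariant Borel probability measures on $G^0$ separates points. Concretely, given a nonzero $f\in C_c(G)$, I must produce some $\mu\in\mathcal{M}$ such that $f$ does not vanish on the support of the integrated measure $\nu=\int c_{xG}\,d\mu(x)$; by Lemma~\ref{Lemma: faithful measure} this is exactly the assertion that $\mathrm{Ind}(\mu)(f)\neq 0$.

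The first step is to convert the condition on $\mathrm{supp}(\nu)\subseteq G$ into a condition on $\mathrm{supp}(\mu)\subseteq G^0$. Pick $\gamma_0\in G$ with $f(\gamma_0)\neq 0$ and set $x_0=r(\gamma_0)$. Since the open slices form a basis for the topology of $G$, every open neighborhood of $\gamma_0$ contains an open slice $U\ni\gamma_0$; on a slice the range map is injective, so $|xU|=\chi_{r(U)}(x)$ and hence $\nu(U)=\mu(r(U))$. As $r$ is an open map, $r(U)$ is an open neighborhood of $x_0$. Therefore, whenever $x_0\in\mathrm{supp}(\mu)$ we get $\nu(U)>0$ for all such $U$, so $\gamma_0\in\mathrm{supp}(\nu)$ and $f$ does not vanish on $\mathrm{supp}(\nu)$. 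The problem thus reduces to finding a quasi-invariant probability measure $\mu$ with $x_0\in\mathrm{supp}(\mu)$.

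For this I would use an \emph{orbit measure}. Let $\mathcal{O}=r(Gx_0)$ be the orbit of $x_0$; since $G$ is \'etale, $Gx_0$ is countable, so $\mathcal{O}$ is a countable subset of $G^0$ with $x_0\in\mathcal{O}$. Choose weights $w\colon\mathcal{O}\to(0,\infty)$ with $\sum_{y\in\mathcal{O}}w(y)=1$, and put $\mu=\sum_{y\in\mathcal{O}}w(y)\,\delta_y$, a Borel probability measure with $x_0\in\mathrm{supp}(\mu)$. Writing $G_{|\mathcal{O}}$ for the countable set of arrows with source and range in $\mathcal{O}$, a direct computation yields $\nu(\{\gamma\})=w(r(\gamma))$ and $\nu^{-1}(\{\gamma\})=w(s(\gamma))$ for every $\gamma\in G_{|\mathcal{O}}$, while $\nu$ and $\nu^{-1}$ both vanish off $G_{|\mathcal{O}}$. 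Since $r(\gamma)$ and $s(\gamma)$ lie in $\mathcal{O}$ for every $\gamma\in G_{|\mathcal{O}}$, the measures $\nu$ and $\nu^{-1}$ are atomic with strictly positive mass at each point of the same countable set; hence $\nu\sim\nu^{-1}$ and $\mu\in\mathcal{M}$.

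Putting the pieces together completes the argument: the orbit measure $\mu$ is quasi-invariant and charges $x_0$, so by the first step $\gamma_0\in\mathrm{supp}(\nu)$, whence $f$ does not vanish on $\mathrm{supp}(\nu)$ and $\mathrm{Ind}(\mu)(f)\neq 0$. I expect the only subtle point to be the identity $\nu(U)=\mu(r(U))$, which uses both that $U$ may be taken to be a slice and that $r$ is an open map; granting this, quasi-invariance of the orbit measure is immediate from its purely atomic structure, so no use of Radon--Nikodym cocycles or the finer theory of quasi-invariant measures is required.
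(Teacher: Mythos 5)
Your proof is correct and follows essentially the same route as the paper: both arguments reduce the problem, via Lemma~\ref{Lemma: faithful measure}, to exhibiting for each point a quasi-invariant measure charging it, and both use the transitive (orbit) measures $\mu=\sum_{y\in\mathcal{O}}w(y)\delta_y$ to do so. The only difference is that the paper outsources the quasi-invariance and separation properties of transitive measures to Renault's book, whereas you verify them directly from the atomic structure of $\nu$ and $\nu^{-1}$ and the identity $\nu(U)=\mu(r(U))$ for open slices $U$; your computation is sound.
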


\begin{proof}
A quasi-invariant Borel probability measure is said to be transitive if it
is supported by an orbit. Every orbit carries a transitive measure, which is
unique up to equivalence; see \cite[Definition 3.9 of Chapter 1 ]%
{renault_groupoid_1980}. It is well known that the transitive measures
constitute a collection of quasi-invariant Borel probability measures on $%
G^{0}$ that separates points; see \cite[Proposition 1.11 of Chapter 2]%
{renault_groupoid_1980}, so the proof is complete.
\end{proof}

\section{Representations of inverse semigroups on \texorpdfstring{$L^p$}{Lp}%
-spaces}

\subsection{The Banach-Lamperti theorem\label{Subsection: Banach-Lamperti
theorem}}

Let $\mu $ and $\nu $ be Borel probability measures on standard Borel spaces 
$X$ and $Y$, and let $p\in \lbrack 1,\infty )\setminus \left\{ 2\right\} $.
The \emph{Banach-Lamperti theorem }\cite[Theorem 3.2.5]%
{fleming_isometries_2003} classifies the linear isometries from $L^{p}\left(
\mu \right) $ to $L^{p}\left( \nu \right) $ in terms of \emph{regular set
isomorphisms}; see \cite[Definition 3.2.3]{fleming_isometries_2003}. We are
interested in the case of \emph{surjective }linear isometries. In such a
case the Banach-Lamperti theorem can be stated as follows.

\begin{theorem}[Banach-Lamperti]
\label{Theorem: Banach-Lamperti} Let $p\in \lbrack 1,\infty )\setminus \{2\}$%
. If $u:L^{p}(\mu )\rightarrow L^{p}(\nu )$ is a surjective linear isometry,
then there are conull Borel subsets $X_{0}$ and $Y_{0}$ of $X$ and $Y$, a
Borel isomorphism $\phi \colon X_{0}\rightarrow Y_{0}$ such that $\phi
_{\ast }(\mu )|_{X_{0}}\sim \nu |_{Y_{0}}$, and a Borel function $g\colon
Y\rightarrow \mathbb{C}$ with $\left\vert g(y)\right\vert ^{p}=\frac{d\phi
_{\ast }(\mu )}{\ d\nu }(y)$ for $\nu $-almost every $y\in Y$, such that $%
u\xi =g\cdot \left( \xi \circ \phi ^{-1}\right) $ for every $\xi \in
L^{p}(\nu )$.
\end{theorem}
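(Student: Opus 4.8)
The plan is to leverage the rigidity of the equality case in Clarkson's inequality, which for $p\neq 2$ detects disjointness of supports, in order to force $u$ to be a weighted composition operator; throughout, disjointness of $f,g\in L^p(\mu)$ means $\mathrm{supp}(f)\wedge\mathrm{supp}(g)=0$ in the measure algebra $\mathcal{B}_\mu$. First I would record the elementary fact that if $f$ and $g$ have disjoint supports then
\[
\|f+g\|^p+\|f-g\|^p=2\|f\|^p+2\|g\|^p,
\]
whereas Lemma~\ref{Lemma: Lamperti-Clarkson} supplies the converse when $p\neq 2$. Since $u$ is a surjective isometry, $\|uf\pm ug\|=\|f\pm g\|$, $\|uf\|=\|f\|$ and $\|ug\|=\|g\|$, so the displayed identity holds for $(f,g)$ if and only if it holds for $(uf,ug)$. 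Applying this to both $u$ and $u^{-1}$, I conclude that $f\perp g$ in $L^p(\mu)$ if and only if $uf\perp ug$ in $L^p(\nu)$.

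\emph{Step 2: the induced Boolean isomorphism.} I would set $\Phi(E)=\mathrm{supp}(u\chi_E)\in\mathcal{B}_\nu$ for $E\in\mathcal{B}_\mu$. The point is that $u$ preserves the order of supports: one checks that $\mathrm{supp}(f)\le\mathrm{supp}(g)$ is equivalent to the condition that every $\theta\perp g$ satisfies $\theta\perp f$, and this condition is transported bijectively by $u$ in view of Step~1; hence $\mathrm{supp}(f)\le\mathrm{supp}(g)$ if and only if $\mathrm{supp}(uf)\le\mathrm{supp}(ug)$. It follows that $\Phi$ is an order-isomorphism, hence a Boolean $\sigma$-isomorphism $\mathcal{B}_\mu\to\mathcal{B}_\nu$, with inverse induced by $u^{-1}$; countable additivity of $\Phi$ comes from continuity of $u$ together with $\chi_{\bigsqcup_n E_n}=\sum_n\chi_{E_n}$ in $L^p(\mu)$. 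In particular $\Phi$ matches the $\mu$-null ideal with the $\nu$-null ideal.

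\emph{Step 3 (the main obstacle): point realization.} The crux is to realize the abstract isomorphism $\Phi$ by an honest point map, and this is the genuinely measure-theoretic rather than functional-analytic step. Transporting $\mu$ across $\Phi$ produces a measure on $\mathcal{B}_\nu$ with the same null sets as $\nu$, so after reducing to the measure-preserving case I would invoke the classical theorem of von Neumann that a $\sigma$-isomorphism between the measure algebras of standard Borel probability spaces is implemented by a Borel isomorphism between conull Borel subsets (see \cite{kechris_classical_1995}). This yields conull $X_0\subseteq X$ and $Y_0\subseteq Y$ and a Borel isomorphism $\phi\colon X_0\to Y_0$ inducing $\Phi$; since $\Phi$ preserves null sets, $\phi$ carries $\mu$-null sets to $\nu$-null sets and back, i.e.\ $\phi_\ast(\mu)|_{X_0}\sim\nu|_{Y_0}$. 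Here one obtains only equivalence, not equality, of measures, and it is precisely standardness of the spaces that makes the realization possible.

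\emph{Step 4: the weight and the formula.} Finally I would set $g=u\chi_X\in L^p(\nu)$. For $E'\subseteq E$ the identity $u\chi_E=u\chi_{E'}+u\chi_{E\setminus E'}$ has disjointly supported summands, so $u\chi_E$ and $u\chi_{E'}$ agree on $\Phi(E')$; taking $E=X$ gives $u\chi_{E'}=g\cdot\chi_{\Phi(E')}=g\cdot(\chi_{E'}\circ\phi^{-1})$ for every $E'$. Thus $u\xi=g\cdot(\xi\circ\phi^{-1})$ holds on characteristic functions, hence on simple functions by linearity, and hence on all of $L^p(\mu)$ by density and continuity of both sides. The weight is then pinned down by the isometry property: for every Borel $F\subseteq Y_0$ one has $\int_F|g|^p\,d\nu=\|g\cdot\chi_F\|_p^p=\|u\chi_{\phi^{-1}(F)}\|_p^p=\mu(\phi^{-1}(F))=\phi_\ast(\mu)(F)$, so $|g(y)|^p=\frac{d\phi_\ast(\mu)}{d\nu}(y)$ for $\nu$-almost every $y\in Y$, which completes the argument.
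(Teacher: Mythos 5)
Your proposal is correct and follows essentially the same route as the paper: use the Lamperti--Clarkson equality case to show $u$ preserves disjointness of supports, obtain the Boolean $\sigma$-isomorphism $E\mapsto \mathrm{supp}(u\chi_E)$, realize it by a Borel point map between conull subsets via the standard Borel realization theorem, and then set $g=u\chi_X$ to recover the weighted composition formula and the Radon--Nikodym identity. The only difference is that you spell out some verifications (order preservation, countable additivity) that the paper leaves implicit.
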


Theorem \ref{Theorem: Banach-Lamperti} was proved by Banach in \cite[Section
5]{banach_theorie_1993} and later generalized by Lamperti to not necessarily
surjective isometries \cite{lamperti_isometries_1958}.

\subsection{Hermitian idempotents and spatial partial isometries}

Let $X$ be a complex vector space. The following definition is taken from 
\cite{lumer_semi-inner-product_1961}.

\begin{definition}
A \emph{semi-inner product} on $X$ is a function $[ \cdot ,\cdot ]\colon
X\times X\rightarrow \mathbb{C}$ satisfying:

\begin{enumerate}
\item $\left[ \cdot ,\cdot \right] $ is linear in the first variable;

\item $\left[ x,\lambda y\right] =\overline{\lambda }\left[ x,y\right] $ for
every $\lambda \in \mathbb{C}$ and $x,y\in X$;

\item $\left[ x,x\right] \geq 0$ for every $x\in X$, and equality holds if
and only if $x=0$;

\item $\left\vert \left[ x,y\right] \right\vert \leq \left[ x,x\right] ^{%
\frac{1}{2}}\left[ y,y\right] ^{\frac{1}{2}}$ for every $x,y\in X$.
\end{enumerate}

The \emph{norm} on $X$ associated with the semi-inner product $[ \cdot
,\cdot ] $ is defined by $\left\| x\right\| =\left[ \cdot ,\cdot \right] ^{%
\frac{1}{2}}$ for $x\in X$.
\end{definition}

In general, there might be different semi-inner products on $X$ that induce
the same norm. Nonetheless, it is not difficult to see that on a smooth
Banach space---and in particular on $L^{p}$-spaces---there is at most one
semi-inner product compatible with its norm; see the remark after the proof
of Theorem~3 in \cite{lumer_semi-inner-product_1961}.

\begin{definition}
A semi-inner product on a Banach space that induces its norm is called \emph{%
compatible}. A Banach space $X$ endowed with a compatible semi-inner product
is called a \emph{semi-inner product space}.
\end{definition}

By the above discussion, if $X$ is a smooth Banach space, then a compatible
semi-inner product---when it exists---is unique.

\begin{remark}
It is easy to verify that the norm of $L^{p}( \lambda) $ is induced by the
semi-inner product%
\begin{equation*}
\left[ f,g\right] =\left\| g\right\| _{p}^{2-p}\int f(x) \overline{g(x)}%
\left\vert g(x) \right\vert ^{p-2} d\lambda(x)
\end{equation*}%
for $f,g\in L^{p}(\lambda) $ with $g\neq 0$.
\end{remark}

An \emph{inner product }on $X$ is precisely a semi-inner product such that
moreover $\left[ x,y\right] =\overline{\left[ y,x\right] }$ for every $%
x,y\in X$. Semi-inner products allow one to generalize notions concerning
operators on Hilbert spaces to more general Banach spaces.

\begin{definition}
Let $X$ be a semi-inner product space, and let $T\in B(X)$. The \emph{%
numerical range} $W( T) $ of $T$, is the set 
\begin{equation*}
\left\{[ Tx,x]\colon x\in X, [ x,x] =1\right\}\subseteq \mathbb{C}.
\end{equation*}
The operator $T$ is called \emph{hermitian} if $W( T) \subseteq \mathbb{R}$.
\end{definition}

Adopt the notation and terminology from the definition above. In view of 
\cite{lumer_semi-inner-product_1961} the following statements are equivalent:

\begin{enumerate}
\item $T$ is hermitian;

\item $\left\| 1+irT\right\| =1+o\left( r\right) $ for $r\rightarrow 0$;

\item $\left\| \exp \left( irT\right) \right\| =1$ for all $r\in \mathbb{R} $%
.
\end{enumerate}

It is clear that when $X$ is a Hilbert space, an operator is hermitian if
and only if it is self-adjoint. In particular, the hermitian idempotents on
a Hilbert space are exactly the orthogonal projections.

Let $\lambda $ be a Borel measure on a standard Borel space $Z$. Hermitian
idempotents on $L^{p}(\lambda )$, for $p\neq 2$, have been characterized in 
\cite[Chapter 3]{torrance_adjoints_1968} (see also \cite%
{berkson_hermitian_1972}): these are precisely the multiplication operators
associated with characteristic functions on Borel subsets of $Z$.

Recall that a bounded linear operator $s$ on a Hilbert space is a \emph{%
partial isometry }if there is another bounded linear operator $t$ such that $%
st$ and $ts$ are orthogonal projections. The following is a generalization
of partial isometries on Hilbert spaces to $L^p$-spaces. We use the term
`spatial' in accordance to the terminology in \cite%
{phillips_analogs_2012,phillips_simplicity_2013,phillips_isomorphism_2013,phillips_crossed_2013}%
.

\begin{definition}
\label{definition: spatial}Let $X$ be a semi-inner product space and $s\in
B(( X) $. We say that $s$ is a \emph{partial isometry} if $(\| s\| \leq 1$
and there exists $t\in B(( X) $ such that $(\| t\| \leq 1$ and $st$ and $ts$
are idempotent. If moreover $st$ and $ts$ are \emph{hermitian }idempotents
then we say that $s$ is \emph{spatial} partial isometry.
\end{definition}

Following \cite{phillips_analogs_2012}, we call an element $t$ as in
Definition \ref{definition: spatial} a \emph{reverse} of $s$. We call $ts$
and $st$ the \emph{source} and \emph{range} idempotents of $s$,
respectively. We denote by $\mathcal{S}(X)$ the set of all spatial partial
isometries in $B(X)$, and by $\mathcal{E}(X)$ the set of hermitian
idempotents in $B(X)$.

It is a standard fact in Hilbert space theory that all partial isometries on
a Hilbert space are spatial. Moreover, the reverse of a partial isometry on
a Hilbert space is unique, and it is given by its adjoint. The situation for
partial isometries on $L^{p}$-spaces, for $p\neq 2$, is rather different.
The following proposition can be taken as a justification for the term
\textquotedblleft spatial\textquotedblright .

\begin{proposition}
Let $p\in (1,\infty )\setminus \{2\}$ and let $\lambda $ be a $\sigma $%
-finite Borel measure on a standard Borel space $Z$. If $e$ is a hermitian
idempotent in $L^{p}(\lambda )$, then there is a Borel subset $E$ of $Z$
such that $e=\Delta _{\chi _{E}}$. More generally, if $s$ is a spatial
partial isometry on $L^{p}(\lambda )$, then there are Borel subsets $E$ and $%
F$ of $Z$, a Borel isomorphism $\phi \colon E\rightarrow F$, and a Borel
function $g\colon F\rightarrow \mathbb{C}$ such that 
\begin{equation*}
(s\xi )(y)=%
\begin{cases}
g(y)\cdot (\xi \circ \phi ^{-1})(y) & \text{if }y\in F\text{, and} \\ 
0 & \text{ otherwise}%
\end{cases}%
\end{equation*}%
for all $\xi $ in $L^{p}(\lambda )$ and for $\lambda $-almost every $y\in Z$.
\end{proposition}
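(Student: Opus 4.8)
The plan is to reduce the structure of $s$ to that of a surjective isometry between two $L^p$-spaces and then invoke the Banach--Lamperti theorem. First I would dispose of the last assertion: it is precisely Banach's characterization of hermitian idempotents recalled before the statement, namely that on $L^p(\lambda)$ with $p\neq 2$ the hermitian idempotents are exactly the operators $\Delta_{\chi_E}$ for Borel $E\subseteq Z$. Granting this, I fix a reverse $t$ of $s$ and set $e=ts$ and $f=st$; both are hermitian idempotents, so there are Borel sets $E,F\subseteq Z$ with $e=\Delta_{\chi_E}$ and $f=\Delta_{\chi_F}$.

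The decisive step is to localise $s$ to the corner cut out by $E$ and $F$, i.e.\ to prove $s=\Delta_{\chi_F}\,s=s\,\Delta_{\chi_E}$. Here I use the reverse relations $sts=s$ and $tst=t$: from $sts=s$ one reads off $s=(st)s=\Delta_{\chi_F}s$ and $s=s(ts)=s\Delta_{\chi_E}$, so $s$ annihilates $L^p(\lambda|_{Z\setminus E})$, takes values in $L^p(\lambda|_F)$, and only sees the restriction of its argument to $E$. I would then show that $v:=s|_{L^p(\lambda|_E)}\colon L^p(\lambda|_E)\to L^p(\lambda|_F)$ is a surjective isometry. Isometry follows, for $\xi$ supported in $E$, from the forced equalities in $\|\xi\|=\|ts\xi\|\le\|t\|\,\|s\xi\|\le\|s\xi\|\le\|s\|\,\|\xi\|\le\|\xi\|$; the symmetric computation using $tst=t$ shows $t|_{L^p(\lambda|_F)}$ is an isometry into $L^p(\lambda|_E)$, and since $ts=\mathrm{id}$ on $L^p(\lambda|_E)$ and $st=\mathrm{id}$ on $L^p(\lambda|_F)$ the two maps are mutually inverse, giving surjectivity of $v$.

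With $v$ a surjective isometry, I would apply Theorem~\ref{Theorem: Banach-Lamperti} (after replacing $\lambda|_E$ and $\lambda|_F$ by equivalent probability measures, which only rescales the eventual weight). This yields conull Borel sets $E_0\subseteq E$ and $F_0\subseteq F$, a Borel isomorphism $\phi\colon E_0\to F_0$, and a Borel function $g$ on $F$ with $v\xi=g\cdot(\xi\circ\phi^{-1})$ for all $\xi\in L^p(\lambda|_E)$. Assembling the global formula, for arbitrary $\xi\in L^p(\lambda)$ one has $s\xi=s(\Delta_{\chi_E}\xi)=v(\chi_E\xi)$, which for $y\in F_0$ equals $g(y)\,\xi(\phi^{-1}(y))$ because $\phi^{-1}(y)\in E$, while $s\xi$ vanishes off $F$ since $s=\Delta_{\chi_F}s$. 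Extending $\phi$ and $g$ arbitrarily (by the identity and by $0$) on the leftover null sets produces exactly the claimed weighted-composition description, valid for $\lambda$-almost every $y$.

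I expect the localisation $s=\Delta_{\chi_F}\,s=s\,\Delta_{\chi_E}$ to be the real obstacle. Contractivity of $s$ and $t$ together with idempotency of $st$ and $ts$ is by itself insufficient: a contraction can genuinely distribute mass across several coordinates while still making $st$ and $ts$ hermitian idempotents, so one must use the full relations $sts=s$ and $tst=t$ attached to the notion of a reverse, rather than merely that $st$ and $ts$ are idempotent. Once $s$ is confined to its corner, the conclusion is a direct consequence of Banach--Lamperti.
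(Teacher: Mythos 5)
Your argument is correct and is exactly the paper's (one-line) proof with the details supplied: identify the hermitian idempotents $ts=\Delta_{\chi_E}$ and $st=\Delta_{\chi_F}$ via Banach's characterization, localise $s$ to a surjective isometry $L^{p}(\lambda|_{E})\to L^{p}(\lambda|_{F})$, and apply the Banach--Lamperti theorem. Your remark that the localisation $s=\Delta_{\chi_F}s=s\Delta_{\chi_E}$ genuinely needs the relations $sts=s$ and $tst=t$ (part of Phillips' notion of a reverse, though not written into the paper's definition, whose literal form admits counterexamples such as $s(x_{1},x_{2})=(x_{1},\varepsilon x_{2})$, $t(x_{1},x_{2})=(x_{1},0)$ on $\ell^{p}_{2}$) is correct and well taken.
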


\begin{proof}
The result follows from the characterization of hermitian idempotents
mentioned above, together with the Banach-Lamperti theorem.
\end{proof}

\begin{remark}
Adopt the notation of the above proposition. It is easy to check that the
reverse of $s$ is also spatial, and that it is given by 
\begin{equation*}
(t\xi )(y)=%
\begin{cases}
\overline{(g\circ \phi )(y)}\cdot (\xi \circ \phi )(y) & \text{if }y\in E%
\text{, and} \\ 
0 & \text{ otherwise}%
\end{cases}%
\end{equation*}%
for all $\xi $ in $L^{p}(\lambda )$ and for $\lambda $-almost every $y\in Z$%
. In particular, the reverse of a spatial partial isometry of an $L^{p}$%
-space is unique. We will consequently write $s^{\ast }$ for the reverse of
a spatial partial isometry $s$.
\end{remark}

For $p\in \left( 1,+\infty \right) \setminus \left\{ 2\right\} $ the set $%
\mathcal{S}(L^{p}(\lambda ))$ of spatial partial isometries on $%
L^{p}(\lambda )$ is an inverse semigroup, and the set $\mathcal{E}%
(L^{p}(\lambda ))$ of hermitian idempotents on $L^{p}(\lambda )$ is
precisely the semilattice of idempotent elements of $\mathcal{S}%
(L^{p}(\lambda ))$. Moreover, the map $\mathcal{B}_{\lambda }\rightarrow 
\mathcal{E}(L^{p}(\lambda ))$ given by $F\mapsto \Delta _{\chi _{F}}$ is an
isomorphism of semilattices. Thus, $\mathcal{E}(L^{p}(\lambda ))$ is a
complete Boolean algebra.

\begin{remark}
If $(e_{j})_{j\in I}$ is an increasing net of hermitian idempotents, then $%
\sup\nolimits_{j\in I}e_{j}$ is the limit of the sequence $(e_{j})_{j\in I}$
in the strong operator topology.
\end{remark}

\subsection{Representations of inverse semigroups}

We now turn to inverse semigroup representations on $L^{p}$-spaces by
spatial partial isometries. Fix an inverse semigroup $\Sigma $, and recall
that $M_{n}(\Sigma )$ has a natural structure of inverse semigroup for every 
$n\geq 1$ by \cite[Proposition 2.1.4]{paterson_groupoids_1999}.

\begin{definition}
Let $\lambda$ be a $\sigma $-finite Borel measure on a standard Borel space.
A \emph{representation} of $\Sigma $ on $L^{p}(\lambda) $ is a semigroup
homomorphism $\rho \colon \Sigma \to \mathcal{S}( L^{p}(\lambda))$.

For $n\geq 1$ denote by $\lambda ^{\left( n\right) }$ the measure $\lambda
\times c_{n}$, where $c_{n}$ is the counting measure on $n$. We define the 
\emph{amplification} $\rho ^{(n)}\colon M_{n}(\Sigma )\rightarrow \mathcal{S}%
(L^{p}(\lambda ^{(n)}))$ of $\rho $, by $\rho ^{n}([\sigma _{ij}]_{i,j\in
n})=[\rho (\sigma _{ij})]_{i,j\in n}$, where we identify $B(L^{p}(\lambda
^{(n)}))$ with $M_{n}(B(L^{p}(\lambda )))$ in the usual way.\newline
\indent The \emph{dual }of $\rho $ is the representation $\rho ^{\prime
}\colon \Sigma \rightarrow \mathcal{S}(L^{p^{\prime }}(\lambda ))$ given by $%
\rho ^{\prime }(\sigma )=\rho (\sigma ^{\ast })^{\prime }$ for $\sigma \in
\Sigma $.
\end{definition}

\begin{definition}
\label{Definition:CSIGMA}Denote by $\mathbb{C}\Sigma $ the complex *-algebra
of formal linear combinations of elements of $\Sigma $, with operations
determined by $\delta _{\sigma }\delta _{\tau }=\delta _{\sigma \tau }$ and $%
\delta _{\sigma }^{\ast }=\delta _{\sigma ^{\ast }}$ for all $\sigma ,\tau
\in \Sigma $, and endowed with the $\ell ^{1}$-norm. The canonical
identification of $\mathbb{C}M_{n}(\Sigma )$ with $M_{n}(\mathbb{C}\Sigma )$
for $n\geq 1$, defines matrix norms on $\mathbb{C}\Sigma $.
\end{definition}

\begin{remark}
Every representation $\rho \colon \Sigma \rightarrow \mathcal{S}%
(L^{p}(\lambda ))$ induces a contractive representation $\pi _{\rho }\colon 
\mathbb{C}\Sigma \rightarrow B(L^{p}(\lambda ))$ such that $\pi _{\rho
}(\delta _{\sigma })=\rho \left( \sigma \right) $ for $\sigma \in \Sigma $.
It is not difficult to verify the following facts:

\begin{enumerate}
\item Since, for $n\geq 1$, the amplification $\pi _{\rho }^{(n)}$ of $\pi
_{\rho }$ to $M_{n}(\mathbb{C}\Sigma) $ is the representation associated
with the amplification $\rho ^{(n) }$ of $\rho $, it follows that $\pi
_{\rho }$ is $p$-completely contractive.

\item The representation $\pi _{\rho ^{\prime }}$ associated with the dual $%
\rho ^{\prime }$ of $\rho $ is the dual of the representation $\pi _{\rho }$
associated with $\rho $.
\end{enumerate}
\end{remark}

\begin{definition}
Let $\lambda $ and $\mu $ be $\sigma $-finite Borel measure on standard
Borel spaces, and let $\rho $ and $\kappa $ be representations of $\Sigma $
on $L^{p}(\lambda) $ and $L^{p}(\mu) $ respectively. We say that $\rho $ and 
$\kappa $ are \emph{equivalent }if there is a surjective linear isometry $%
u\colon L^{p}(\lambda) \to L^{p}(\mu) $ such that $u\rho (\sigma) =\kappa
(\sigma) u$ for every $\sigma \in \Sigma $.
\end{definition}

Adopt the notation of the definition above. If $\rho $ and $\kappa $ are
equivalent, then their dual representations $\rho ^{\prime }$ and $\kappa
^{\prime }$ are also equivalent. Similarly, if $\rho $ and $\kappa $ are
equivalent, then the corresponding representations $\pi _{\rho }$ and $\pi
_{\kappa }$ of $\mathbb{C}\Sigma $ are equivalent.

\subsection{Tight representations of semilattices}

In the following, all semilattices will be assumed to have a minimum element 
$0$. Consistently, all inverse semigroups will be assumed to have a neutral
element $0$, which is the minimum of the associated idempotent semilattice.
In the rest of this subsection we recall some definitions from Section 11 of 
\cite{exel_inverse_2008}.

\begin{definition}
\label{Definition:cover}Let $E$ be a semilattice and let $\mathcal{B}=(%
\mathcal{B},0,1,\wedge ,\vee ,\lnot )$ be a Boolean algebra. A \emph{%
representation} of $E$ on $\mathcal{B}$ is a semilattice morphism $%
E\rightarrow (\mathcal{B},\wedge )$ satisfying $\beta (0)=0$.

Two elements $x,y$ of $E$ are said to be \emph{orthogonal}, written $x\perp
y $, if $x\wedge y=0$. Furthermore, we say that $x$ and $y$ \emph{intersect
(each other)} if they are not orthogonal. \newline
\indent If $X\subseteq Y\subseteq E$, then $X$ is a \emph{cover }for $Y$ if
every nonzero element of $Y$ intersects an element of $X$.
\end{definition}

It is easy to verify that a representation of a semilattice $E$ on a Boolean
algebra sends orthogonal elements to orthogonal elements. It is also
immediate to check that a cover for the set of predecessors of some $x\in E$
is also a cover for $\{x\}$.

\begin{notation}
If $X$ and $Y$ are (possibly empty) subsets of $E$, we denote by $E^{X,Y}$
the set 
\begin{equation*}
E^{X,Y}=\{z\in E\colon z\leq x\ \mbox{ for all }x\in X,\mbox{ and }z\perp y\ %
\mbox{ for all }y\in Y\}.
\end{equation*}
\end{notation}

We are now ready to state the definition of tight representation of a
semilattice.

\begin{definition}
\label{Definition: tight representation semilattice} Let $E$ be a
semilattice and let $\mathcal{B}$ be a Boolean algebra. A representation $%
\beta \colon E\rightarrow \mathcal{B}$ is said to be \emph{tight} if for
every pair $X,Y$ of (possibly empty) finite subsets of $E$ and every finite
cover $Z$ of $E^{X,Y}$, we have 
\begin{equation}
\bigvee\limits_{z\in Z}\beta (z)=\bigwedge_{x\in X}\beta (x)\wedge
\bigwedge_{y\in Y}\lnot \beta (y).\text{\label{Equation: tight}}
\end{equation}
\end{definition}

Proposition 11.9 of \cite[Proposition 11.9]{exel_inverse_2008} shows that,
when $E$ is also a Boolean algebra, the tight representations of $E$ are
precisely the Boolean algebra homomorphisms.

\begin{definition}
\label{Definition:dense-subsemilattice}Suppose that $E$ is a semilattice. A
subsemilattice $F$ of $E$ is \emph{dense} if for every $x\in E$ nonzero
there is $y\in F$ nonzero such that $y\leq x$.
\end{definition}

\begin{lemma}
\label{Lemma: dense}Suppose that $E$ is a semilattice, and $F$ is a dense
subsemilattice of $E$. If $\beta $ is a tight representation of $E$ on a
Boolean algebra $\mathcal{B}$, then the restriction of $\beta $ to $F$ is tight.
\end{lemma}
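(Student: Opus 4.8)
The plan is to reduce tightness of $\beta|_F$ to the (assumed) tightness of $\beta$ on $E$, by showing that any finite cover computed inside $F$ is automatically a cover inside $E$. So I would fix finite subsets $X,Y\subseteq F$ and a finite cover $Z$ of $F^{X,Y}$, which by definition means $Z\subseteq F^{X,Y}$ and every nonzero element of $F^{X,Y}$ intersects some element of $Z$. The first routine observation is that $F^{X,Y}=E^{X,Y}\cap F$, so in particular $Z\subseteq F^{X,Y}\subseteq E^{X,Y}$; thus $Z$ already sits inside $E^{X,Y}$ as required of a cover of $E^{X,Y}$. The goal is then to upgrade $Z$ to a cover of $E^{X,Y}$ \emph{in $E$}. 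Once that is done, tightness of $\beta$ applied to the pair $X,Y$ and the cover $Z$ yields
\[
\bigvee_{z\in Z}\beta(z)=\bigwedge_{x\in X}\beta(x)\wedge\bigwedge_{y\in Y}\lnot\beta(y),
\]
and since the right-hand side depends only on the values of $\beta$ on $X\cup Y\subseteq F$, this is precisely the identity witnessing tightness of $\beta|_F$.

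The heart of the matter is therefore the covering claim, and this is exactly where density is used. I would take an arbitrary nonzero $w\in E^{X,Y}$ and, invoking density of $F$ in $E$, choose a nonzero $v\in F$ with $v\le w$. The next step is to verify that $v\in F^{X,Y}$: for each $x\in X$ we have $v\le w\le x$, and for each $y\in Y$ monotonicity of the meet gives $v\wedge y\le w\wedge y=0$, so $v\perp y$. Hence $v$ is a nonzero element of $F^{X,Y}$, and because $Z$ covers $F^{X,Y}$ there is some $z\in Z$ with $z\wedge v\neq 0$.

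Finally I would transport this intersection upward: since $v\le w$, monotonicity of the meet gives $0\neq z\wedge v\le z\wedge w$, whence $z\wedge w\neq 0$, i.e.\ $z$ intersects $w$. As $w$ was an arbitrary nonzero element of $E^{X,Y}$, this shows $Z$ is a cover of $E^{X,Y}$, completing the reduction. I do not expect a genuine obstacle here; the only points demanding care are the elementary bookkeeping that $F^{X,Y}=E^{X,Y}\cap F$ and that both the relation $\le$ and orthogonality propagate correctly along $v\le w$ — all of which follow immediately from the semilattice axioms together with $0$ being the minimum element.
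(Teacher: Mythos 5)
Your proof is correct and follows the same route as the paper: pass from a cover of $F^{X,Y}$ to a cover of $E^{X,Y}$ by using density to drop any nonzero $w\in E^{X,Y}$ down to a nonzero $v\in F^{X,Y}$ below it, then invoke tightness of $\beta$ on $E$. You simply spell out the bookkeeping (that $v\in F^{X,Y}$ and that intersections propagate upward) which the paper leaves implicit.
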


\begin{proof}
Suppose that $X,Y\subset F$ and that $Z$ is a cover for $F^{X,Y}$. We claim
that $Z$ is a cover for $E^{X,Y}$. Pick $x\in E^{X,Y}$ nonzero, and use 
density of $F$ in $E$ to find a nonzero $y\in F$ 
such that $y\leq x$. Since $y\in F^{X,Y}$ and $Z$ is a
cover for $F^{X,Y}$, there is $z\in Z$ such that $z$ and $y$ intersect.
Therefore also $z$ and $x$ intersect. This shows that $Z$ is a cover for $%
E^{X,Y}$. Therefore Equation~\eqref{Equation: tight} holds, as desired.
\end{proof}

\subsection{Tight representations of inverse semigroups on $L^p$-spaces}

As in the case of representation of inverse semigroups on Hilbert spaces
(see \cite[Section 13]{exel_inverse_2008}), we will isolate a class of
\textquotedblleft well behaved\textquotedblright\ representations of inverse
semigroups on $L^{p}$-spaces. The following definition is a natural
generalization of \cite[Definition 13.1]{exel_inverse_2008}.

\begin{definition}
\label{Definition: tight representation inverse semigroup} Let $\lambda $ be
a $\sigma $-finite Borel measure on a standard Borel space. A representation 
$\rho \colon \Sigma \rightarrow \mathcal{S}(L^{p}(\lambda ))$ is said to be 
\emph{tight }if its restriction to the idempotent semilattice $E(\Sigma )$
of $\Sigma $ is a tight representation on the Boolean algebra $\mathcal{E}%
(L^{p}(\lambda ))$ of hermitian idempotents.
\end{definition}

\begin{remark}
If $\rho \colon \Sigma \rightarrow \mathcal{S}(L^{p}(\lambda ))$ is a tight
representation as above, then the net $(\rho (\sigma ))_{\sigma \in E(\Sigma
)}$ converges to the identity in the strong operator topology. Thus,
tightness should be thought of as a \emph{nondegeneracy} condition for
representations of inverse semigroups.
\end{remark}

\begin{definition}
\label{Definition: regular tight} A tight representation $\rho $ of $\Sigma $
on $L^{p}(\lambda )$ is said to be \emph{regular} if, for every idempotent
open bisection $U$ of $G$, the element $\rho ( U) $ is the limit of the net $%
( \rho ( V) ) _{V}$ where $V$ ranges among all idempotent open bisections of 
$G$ with compact closure contained in $U$, ordered by inclusion. In formulas,%
\begin{equation}
\rho (U)=\lim_{V\in E(\Sigma _{c}(G)),\overline{V}\subseteq U}\rho (V)\text{%
\label{Equation: regular relation}.}
\end{equation}
\end{definition}

\subsection{Representations of semigroups of bisections\label{Subsection:
integrated representation slices}}

Let $G$ be an étale groupoid, let $\lambda $ be a $\sigma $-finite Borel
measure on a standard Borel space, and let $\pi $ be a contractive
nondegenerate representation of $C_{c}(G)$ on $L^{p}(\lambda )$. Denote by $%
\Sigma _{c}(G)$ the inverse semigroup of precompact open bisections of $G$.
In this subsection, we show how to associate to $\pi $ a tight, regular
representation $\rho _{\pi }$ of $\Sigma _{c}(G)$ on $L^{p}(\lambda )$.

Given a precompact open bisection $A$ of $G$, $\xi \in L^{p}(\lambda )$, and 
$\eta \in L^{p^{\prime }}(\lambda )$, the assignment $f\mapsto \langle \pi
(f)\xi ,\eta \rangle $ is a $\| \cdot \| _{\infty }$-continuous linear
functional on $C_{c}(A)$ of norm at most $\| \xi \| \| \eta \| $. By the
Riesz-Markov-Kakutani representation theorem, there is a Borel measure $\mu
_{A,\xi ,\eta }$ supported on $A$, of total mass at most $\| \xi \| \| \eta
\| $, such that%
\begin{equation}
\langle \pi (f)\xi ,\eta \rangle =\int f\text{ }d\mu _{A,\xi ,\eta }\text{%
\label{Equation: integral}}
\end{equation}%
for every $f\in C_{c}(G)$. If $A,B\in \Sigma _{c}(G)$, then $\mu _{A,\xi
,\eta }$ and $\mu _{B,\xi ,\eta }$ coincide on $A\cap B$. Arguing as in \cite%
[page 87, and pages 98-99]{paterson_groupoids_1999}, we conclude that there
is a Borel measure $\mu _{\xi ,\eta }$ defined on all of $G$, such that $\mu
_{A,\xi ,\eta }$ is the restriction of $\mu _{\xi ,\eta }$ to $A$, for every 
$A\in \Sigma _{c}(G)$, and moreover $\langle \pi (f)\xi ,\eta \rangle =\int
f\ d\mu _{\xi ,\eta }$ for every $f\in C_{c}(G)$.

\begin{lemma}
\label{Lemma: dense span} The linear span of $\{ \pi(\chi_A) \xi \colon A\in
\Sigma _{c}(G), \xi \in L^{p}(\lambda)\}$ is dense in $L^p(\lambda)$.
\end{lemma}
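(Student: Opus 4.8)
The plan is to reduce the assertion, via the nondegeneracy of $\pi$, to an approximation statement for single slices. By nondegeneracy of $\pi$, the linear span of $\{\pi(f)\xi\colon f\in C_c(G),\ \xi\in L^p(\lambda)\}$ is dense in $L^p(\lambda)$ (equivalently, by Theorem~\ref{Theorem: dual}, no nonzero $\eta\in L^{p'}(\lambda)$ annihilates this set). Since by definition $C_c(G)$ is the linear span of the spaces $C_c(A)$ as $A$ ranges over the precompact open slices of $G$, it suffices to show that for every $A\in\Sigma_c(G)$, every $f\in C_c(A)$, and every $\xi\in L^p(\lambda)$, the vector $\pi(f)\xi$ lies in the closed linear span $V$ of $\{\pi(\chi_B)\xi\colon B\in\Sigma_c(G),\ \xi\in L^p(\lambda)\}$.

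The key point is that on functions supported on a single slice the $I$-norm coincides with the supremum norm. Indeed, if $g\in B_c(G)$ vanishes off an open slice $A$, then, since $r$ and $s$ are injective on $A$, each of the sets $xG$ and $Gx$ meets the support of $g$ in at most one point; hence $\|g\|_I=\|g\|_\infty$. Consequently the $I$-norm contractive extension of $\pi$ to $B_c(G)$ (which gives meaning to $\pi(\chi_B)$; cf.\ Remark~\ref{remark: extend to BcG}) satisfies $\|\pi(g)\|\le\|g\|_\infty$ for every $g$ supported on a slice. Now fix $f\in C_c(A)$; writing $f$ as a linear combination of four nonnegative functions in $C_c(A)$, we may assume $f\ge 0$. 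For $\varepsilon>0$, the layer-cake decomposition yields a simple function $s=\sum_i c_i\chi_{U_i}$, where the $U_i=\{f>t_i\}$ are superlevel sets, with $\|f-s\|_\infty<\varepsilon$. Each $U_i$ is an open subset of $A$, hence an open slice, and is precompact because $\overline{U_i}\subseteq\overline A$ is compact; thus $U_i\in\Sigma_c(G)$. Since $f-s$ is supported on the slice $A$, the identity above gives $\|\pi(f)-\pi(s)\|\le\|f-s\|_I=\|f-s\|_\infty<\varepsilon$, so that $\|\pi(f)\xi-\sum_i c_i\,\pi(\chi_{U_i})\xi\|<\varepsilon\|\xi\|$. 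As $\sum_i c_i\,\pi(\chi_{U_i})\xi\in V$ and $V$ is closed, this shows $\pi(f)\xi\in V$, which is what we needed.

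I do not expect a genuine obstacle here; the crux is the slice identity $\|g\|_I=\|g\|_\infty$, which converts the uniform control $\|f-s\|_\infty<\varepsilon$ into operator-norm control $\|\pi(f)-\pi(s)\|<\varepsilon$, together with the observation that superlevel sets of a continuous function on a precompact open slice are again elements of $\Sigma_c(G)$. The only point requiring care is that the statement presupposes $\pi(\chi_B)$ to be defined, i.e.\ that $\pi$ extends $I$-norm contractively to $B_c(G)$; this extension is invoked solely through the inequality $\|\pi(g)\|\le\|g\|_\infty$ for $g$ supported on a slice, and the argument is otherwise insensitive to how it is constructed.
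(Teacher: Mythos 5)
Your proof is correct, but it takes a genuinely different route from the paper's. The paper argues by duality: it takes $\eta\in L^{p'}(\lambda)$ annihilating every $\pi(\chi_A)\xi$, observes that then $\mu_{\xi,\eta}(A)=0$ for every $A\in\Sigma_c(G)$ (hence for every open subset of every such $A$, so that $\int f\,d\mu_{\xi,\eta}=0$ for all $f\in C_c(G)$), and concludes $\eta=0$ from nondegeneracy via Theorem~\ref{Theorem: dual}. You instead prove the stronger, more explicit statement that each $\pi(f)\xi$ lies in the closed span of the $\pi(\chi_B)\xi$, by approximating $f\in C_c(A)$ uniformly by simple functions built from superlevel sets and converting the $\|\cdot\|_\infty$-control into norm control on $\pi$. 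Both arguments ultimately rest on the same observation --- that for functions supported on a single slice the $I$-norm (equivalently, the norm of the functional $f\mapsto\langle\pi(f)\xi,\eta\rangle$ on $C_c(A)$) is governed by $\|\cdot\|_\infty$; the paper uses it implicitly through the Riesz measures $\mu_{A,\xi,\eta}$ of total mass at most $\|\xi\|\,\|\eta\|$, while you use it explicitly as an operator-norm estimate. The paper's route is shorter; yours is constructive and quantitative. One small point: in the context of this lemma $\pi$ is an arbitrary contractive nondegenerate representation of $C_c(G)$, so Remark~\ref{remark: extend to BcG} (which concerns integrated forms $\pi_T$) is not quite the right reference for the extension to $B_c(G)$; but the only fact you need, namely $\|\pi(g)\|\le\|g\|_\infty$ for $g$ supported on a precompact open slice $A$, follows immediately from $\langle\pi(g)\xi,\eta\rangle=\int g\,d\mu_{A,\xi,\eta}$ and the mass bound on $\mu_{A,\xi,\eta}$, exactly as you anticipate in your closing remark. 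Also, for a not-necessarily-precompact open slice $A$ one should first shrink to a precompact open slice containing $\mathrm{supp}(f)$, but this is routine.
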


\begin{proof}
Let $\eta \in L^{p^{\prime }}(\lambda )$ satisfy $\langle \rho _{\pi }(A)\xi
,\eta \rangle =0$ for every $\xi \in L^{p}(\lambda )$ and every $A\in \Sigma
_{c}(G)$. Since $\rho_{\pi}(A)=\pi(\chi_A)$, we have $\langle \rho _{\pi
}(A)\xi ,\eta \rangle =\int \chi _{A}\ d\mu _{\xi ,\eta }$, so it follows
that $\mu _{\xi ,\eta }(A)=0$ for every $\xi \in L^{p}(\lambda )$ and every $%
A\in \Sigma _{c}(G)$. Thus $\langle \pi (f)\xi ,\eta \rangle =0$ for every $%
f\in C_{c}(G)$ and every $\xi \in L^{p}(\lambda )$. Since $\pi $ is
nondegenerate, we conclude that $\eta =0$, which finishes the proof.
\end{proof}

Equation~\eqref{Equation: integral} allows one to extend $\pi $ to $B_{c}(G)$
by defining%
\begin{equation*}
\left\langle \pi (f)\xi ,\eta \right\rangle =\int fd\mu _{\xi ,\eta }
\end{equation*}%
for $f\in B_{c}(G)$, $\xi \in L^{p}(\lambda )$, and $\eta \in L^{p}\left(
\eta \right) $. Lemma~2.2.1 of \cite{paterson_groupoids_1999} shows that $%
\pi $ is indeed a nondegenerate representation of $B_{c}(G)$ on $%
L^{p}(\lambda )$. In particular the function$\ \rho _{\pi }:A\mapsto \pi
(\chi _{A})$ is a semigroup homomorphism from $\Sigma _{c}(G)$ to $B\left(
L^{p}(\lambda )\right) $. We will show below that such a function is a
tight, regular representation of $\Sigma _{c}(G)$ on $L^{p}(\lambda )$

Suppose that $f\in B(G^{0})$. Define $\pi (f)\in B\left( L^{p}(\lambda
)\right) $ by 
\begin{equation}
\left\langle \pi (f)\xi ,\eta \right\rangle =\int fd\mu _{\xi ,\eta }\text{%
\label{Equation: integral2}}
\end{equation}%
for $\xi \in L^{p}(\lambda )$, and $\eta \in L^{p^{\prime }}(\lambda )$.
Since 
\begin{equation}
\pi (fg)=\pi (f)\pi (g)\text{\label{Equation: multiplicative}}
\end{equation}%
for $f,g\in B_{c}(G^{0})$, it follows via a monotone classes argument that
Equation~\eqref{Equation: multiplicative} holds for any $f,g\in B(G^{0})$.
In particular, $\pi (\chi _{A})$ is an idempotent for every $A\in \mathcal{B}%
(G^{(0)})$. It follows from Lemma~\ref{Lemma: dense span} that $\pi \left(
\chi _{G^{0}}\right) $ is the identity operator on $L^{p}(\lambda )$. Fix
now $A\in \mathcal{B}(G^{0})$ and $r\in \mathbb{R}$. For any $\xi \in
L^{p}(\lambda )$ and $\eta \in L^{p^{\prime }}(\lambda )$ such that $%
\left\Vert \xi \right\Vert ,\left\Vert \eta \right\Vert \leq 1$ we have that%
\begin{equation*}
\left\vert \left\langle \left( 1+ir\pi (\chi _{A})\right) \xi ,\eta
\right\rangle \right\vert =\left\vert \int \left( \chi _{G^{0}}+ir\chi
_{A}\right) d\mu _{\xi ,\eta }\right\vert \leq \left\Vert \chi
_{G^{0}}+ir\chi _{A}\right\Vert _{\infty }\leq 1+\frac{1}{2}r^{2}\text{.}
\end{equation*}%
Therefore $\left\Vert 1+ir\pi (\chi _{A})\right\Vert \leq 1+\frac{1}{2}r^{2}$%
. This shows that $\pi (\chi _{A})$ is an hermitian idempotent of $%
L^{p}(\lambda )$. It follows from Equation~\eqref{Equation: integral2} and
Equation~\eqref{Equation: multiplicative} that the function $A\mapsto \pi
(\chi _{A})$ is a $\sigma $-complete homomorphism of Boolean algebras from $%
\mathcal{B}(G^{0})$ to $\mathcal{E}\left( L^{p}(\lambda )\right) $. In
particular $A\rightarrow \pi \left( \chi _{A}\right) $ a tight semilattice
representation; see \cite[Proposition 11.9]{exel_inverse_2008}. By Lemma \ref%
{Lemma: dense} the restriction of a tight representation to a dense
subsemilattice---in the sense of Definition \ref%
{Definition:dense-subsemilattice}---is still tight, it follows that the
function $\rho _{\pi }:A\mapsto \pi (\chi _{A})$ for $A\in \Sigma _{c}(G)$
is a tight representation of $\Sigma _{c}(G)$ on $L^{p}(\lambda )$, which is
moreover regular by $\sigma $-completeness. The same argument shows that if $%
\Sigma $ is an inverse subsemigroup of $\Sigma _{c}(G)$ which forms a basis
for the topology of $G$, then the restriction of $\rho _{\pi }$ to $\Sigma $
is a tight, regular representation of $\Sigma $ on $L^{p}(\lambda )$.

\begin{remark}
It is clear that if $\pi $ and $\widetilde{\pi }$ are $I$-norm contractive
nondegenerate representations of $C_{c}(G)$ on $L^{p}$-spaces, then $\pi $
and $\widetilde{\pi }$ are equivalent if and only if $\rho _{\pi }$ and $%
\rho _{\widetilde{\pi }}$ are equivalent.
\end{remark}

\section{Disintegration of representations}

Throughout this section, we let $G$ be an étale groupoid and $\Sigma $ be an
inverse subsemigroup of $\Sigma _{c}(G)$ that forms a basis for the topology
of $G$. Let $\lambda $ be a $\sigma $-finite measure on a standard Borel
space $Z$.

\subsection{The disintegration theorem}

\begin{theorem}
\label{Theorem: disintegration representation inverse} If $\rho \colon
\Sigma \rightarrow \mathcal{S}(L^{p}(\lambda ))$ is a tight, regular
representation, then there exist a Borel map $q:Z\rightarrow G^{0}$ such
that $\mu =q_{\ast }(\lambda )$ is a quasi-invariant Borel measure on $G^{0}$%
. Furthermore if $\lambda =\int \lambda _{x}\ d\mu (x)$ denotes the
disintegration of $\lambda $ with respect to $\mu $, then there exists a
representation $T$ of $G$ on the Borel Banach bundle $\bigsqcup\nolimits_{x%
\in G^{0}}L^{p}(\lambda _{x})$, such that 
\begin{equation*}
\langle \rho (A)\xi ,\eta \rangle =\int_{A}D(\gamma )^{-\frac{1}{p}}\langle
T_{\gamma }\xi _{s(\gamma )},\eta _{r(\gamma )}\rangle \ d\nu (\gamma )
\end{equation*}%
for $A\in \Sigma $, for $\xi \in L^{p}(\lambda )$, and for $\eta \in
L^{p^{\prime }}(\lambda )$.
\end{theorem}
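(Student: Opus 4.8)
The plan is to run the construction of Subsection~\ref{Subsection: integrated representation slices} in reverse: from the tight regular representation $\rho$ I will first recover the base measure $\mu$ together with a fiber map, then disintegrate each partial isometry $\rho(A)$ into a Borel field of fiber isometries, and finally glue these fields across slices into a single groupoid representation. The idempotents of $\Sigma$ are exactly the open slices contained in $G^{0}$, so $E(\Sigma)$ is a basis for the topology of $G^{0}$; since $\rho$ is tight, Lemma~\ref{Lemma: Boolean tight} shows that $A\mapsto\rho(A)$ extends to a $\sigma$-complete Boolean homomorphism $\beta\colon\mathcal{B}(G^{0})\to\mathcal{E}(L^{p}(\lambda))\cong\mathcal{B}_{\lambda}$. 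By \cite[Theorem~15.10]{kechris_classical_1995} there is a Borel map $q\colon Z\to G^{0}$, unique up to $\lambda$-null sets, with $\beta(E)=q^{-1}(E)$ in $\mathcal{B}_{\lambda}$. I set $\mu=q_{\ast}(\lambda)$ and disintegrate $\lambda=\int\lambda_{x}\,d\mu(x)$ along $q$; by Theorem~\ref{thm: BBb structure} this yields the $L^{p}$-bundle $\mathcal{Z}=\bigsqcup_{x}L^{p}(\lambda_{x})$ with $L^{p}(\mu,\mathcal{Z})\cong L^{p}(\lambda)$, under which every $\pi(f)$ with $f\in B(G^{0})$ becomes the fiber-constant multiplication operator $\Delta_{f\circ q}$.

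\emph{Disintegrating the slices.} Fix $A\in\Sigma$ with associated homeomorphism $\theta_{A}\colon A^{-1}A\to AA^{-1}$. The convolution identity $\chi_{A}\ast f=(f\circ\theta_{A}^{-1})\ast\chi_{A}$, valid for $f\in B(G^{0})$, becomes after applying $\pi$ the intertwining relation
\[
\Delta_{\chi_{F}}\,\rho(A)=\rho(A)\,\Delta_{\chi_{\theta_{A}^{-1}(F)}}
\]
for Borel $F\subseteq AA^{-1}$, where I use that $\pi(\chi_{A})=\rho(A)$ and $\pi(g)=\Delta_{g\circ q}$. Taking a countable separating family of Borel subsets of $AA^{-1}$, this is condition~(3) of Proposition~\ref{Proposition: characterization decomposable} for the invertible isometry obtained by restricting $\rho(A)$ to its source and range idempotents, relative to $\phi=\theta_{A}$. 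Theorem~\ref{Theorem: Banach-Lamperti} applied to this invertible isometry produces a Borel isomorphism of $q^{-1}(A^{-1}A)$ onto $q^{-1}(AA^{-1})$ covering $\theta_{A}$ through $q$, whence $(\theta_{A})_{\ast}\mu_{|A^{-1}A}\sim\mu_{|AA^{-1}}$; as $A$ ranges over a basis this is precisely quasi-invariance of $\mu$, and by \cite[Proposition~3.2.2]{paterson_groupoids_1999} the resulting weight is $D(\cdot)^{-1/p}$ along $A$. Theorem~\ref{Theorem: Guichardet} then gives, for $\mu$-a.e.\ $x\in A^{-1}A$, a fiber isometry $\mathcal{Z}_{x}\to\mathcal{Z}_{\theta_{A}(x)}$ such that $\rho(A)$ is the decomposable operator attached to $x\mapsto D(\gamma)^{-1/p}$ times this isometry, $\gamma$ being the unique element of $A$ with source $x$.

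\emph{Gluing.} For $\gamma\in G$ I define $T_{\gamma}\colon\mathcal{Z}_{s(\gamma)}\to\mathcal{Z}_{r(\gamma)}$ to be the fiber isometry extracted above from any $A\in\Sigma$ containing $\gamma$. The relation $\rho(AB)=\rho(A)\rho(B)$ together with the essential uniqueness of disintegrations (the final clause of Theorem~\ref{Theorem: Guichardet}) forces $T_{\gamma}$ to be independent of the chosen slice and to satisfy $T_{\gamma_{1}\gamma_{2}}=T_{\gamma_{1}}T_{\gamma_{2}}$ for $\mu$-almost every composable pair; since $\Sigma$ is countable, the countably many exceptional null sets are discarded simultaneously, yielding a $\mu$-almost-everywhere homomorphism $T\colon G\to\mathrm{Iso}(\mathcal{Z})$. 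Finally, the decomposable-operator description of each $\rho(A)$, rewritten as an integral over $A$ against $\nu$ using that $A$ is a slice, is exactly the asserted formula for $\langle\rho(A)\xi,\eta\rangle$.

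I expect the gluing step to be the main obstacle: passing from the separate, slice-by-slice, almost-everywhere fiber decompositions to a single globally defined, genuine (almost-everywhere) groupoid homomorphism. The delicate point is the coherent bookkeeping of null sets---checking that the fiber isometries agree on overlaps of slices and respect composition off a null set that can be chosen uniformly in $\gamma$---which is exactly where the countability of the basis $\Sigma$ and the uniqueness clause of Theorem~\ref{Theorem: Guichardet} are indispensable.
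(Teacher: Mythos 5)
Your overall route---recover a fiber map $q$ and the measure $\mu$ from the idempotents, disintegrate each $\rho(A)$ fiberwise via the intertwining relation and Theorem~\ref{Theorem: Guichardet}, then glue over the countable semigroup $\Sigma$ using essential uniqueness of disintegrations---is exactly the one the paper takes, and the slice-by-slice and gluing steps are sound (the paper disposes of the gluing by the same argument you sketch, following Paterson's Theorem~3.2.1, discarding a single $\nu$-null set).

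The genuine gap is at the very first step. Lemma~\ref{Lemma: Boolean tight} does not give you a $\sigma$-complete Boolean homomorphism $\beta\colon\mathcal{B}(G^{0})\to\mathcal{B}_{\lambda}$: that lemma only says that a tight representation of a semilattice \emph{which is already a Boolean algebra} is a finitely additive Boolean homomorphism, whereas $E(\Sigma)$ is merely a semilattice of open subsets forming a basis for $G^{0}$---it is closed under neither complementation nor countable unions. (Only when $G^{0}$ is compact and zero-dimensional and $\Sigma$ consists of compact open slices is $E(\Sigma)$ a Boolean algebra, and even then one must still pass from clopen sets to all Borel sets and from finite to countable additivity.) The extension to $\mathcal{B}(G^{0})$ is a substantial construction in the paper: one first extends $\Phi$ to arbitrary open sets $V$ as the supremum of $\Phi(W)$ over $W\in E(\Sigma_{c})$ with $\overline{W}\subseteq V$---this is precisely where the \emph{regularity} hypothesis on $\rho$ enters, and your argument never invokes regularity anywhere---then builds the outer-regular Borel measure $m(E)=\inf\{\lambda(\Phi(U))\colon U\supseteq E\text{ open}\}$, defines $\Phi(E)$ as the infimum of $\Phi(U)$ over open $U\supseteq E$, and verifies multiplicativity and $\sigma$-additivity of the extension by an $\varepsilon$-approximation argument. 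Without this step there is no $q$, and nothing downstream can start. A secondary, fixable slip: in your ``disintegrating the slices'' paragraph you apply a representation $\pi$ of $C_{c}(G)$ that you are not given (the datum here is only $\rho$); the intertwining relation $\Delta_{\chi_{\Phi(U)}}\rho(A)=\rho(A)\Delta_{\chi_{\Phi(\theta_{A}^{-1}(U))}}$ should instead be extracted directly from the semigroup identity $UA=A(A^{-1}UA)$ together with multiplicativity of $\rho$, which is how the paper obtains it.
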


The rest of this section is dedicated to the proof of the theorem above. For
simplicity and without loss of generality, we will focus on the case where $%
\lambda $ is a probability measure. In the following, we fix a
representation $\rho $ as in the statement of Theorem~\ref{Theorem:
disintegration representation inverse}.

\subsection{Fibration}

Define $\Phi \colon E(\Sigma )\rightarrow \mathcal{B}_{\lambda }$ by $\Delta
_{\chi _{\Phi (U)}}=\rho (U)$ for $U\in E(\Sigma )$. Denote by $\mathcal{U}$
the semilattice of open subsets of $G^{0}$. Extend $\Phi $ to a function $%
\mathcal{U}\rightarrow \mathcal{B}_{\lambda }$ by setting 
\begin{equation*}
\Phi (V)=\bigcup_{W\in E(\Sigma _{c}),\overline{W}\subseteq U}\Phi (W).
\end{equation*}%
Then $\Delta _{\chi _{\Phi (V)}}$ is the limit in the strong operator
topology of the increasing net $(\Delta _{\chi _{\rho (W)}})_{W\in E(\Sigma
_{c}),\overline{W}\subseteq V}$. By Equation~ 
\eqref{Equation:
regular relation}, the expression above indeed defines an extension of $\Phi 
$. Moreover, a monotone classes argument shows that $\Phi $ is a
representation. Tightness of $\rho $ together with Equation~ 
\eqref{Equation:
regular relation} further imply that $\Phi (U\cup V)=\Phi (U)\cup \Phi (V)$
whenever $U$ and $V$ are disjoint, and that%
\begin{equation}
\Phi \left( \bigcup\limits_{n\in \omega }U_{n}\right) \subseteq
\bigcup\limits_{n\in \omega }\Phi \left( U_{n}\right) \text{\label{Equation:
countable subadditivity}}
\end{equation}%
for any sequence $\left( U_{n}\right) _{n\in \omega }$ in $\mathcal{U}$. For 
$U\in \mathcal{U}$, set $m(U)=\lambda (\Phi (U))$. Using \cite[Proposition
3.2.7]{paterson_groupoids_1999}, one can extend $m$ to a Borel measure on $%
G^{0}$ by setting 
\begin{equation*}
m(E)=\inf \left\{ m(U)\colon U\in \mathcal{U}\text{, }E\subseteq U\right\}
\end{equation*}%
for $E\in \mathcal{B}(G^{0})$. Extend $\Phi $ to a homomorphism from $%
\mathcal{B}(G^{0})$ to $\mathcal{B}_{\lambda }$, by setting 
\begin{equation*}
\Phi (E)=\bigwedge \left\{ \Phi (U)\colon U\in \mathcal{U}\text{, }%
U\supseteq E\right\} \text{.}
\end{equation*}%
(The infimum exists by completeness of $\mathcal{B}_{\lambda }$.)

\begin{lemma}
The map $\Phi\colon \mathcal{B}(G^0)\to \mathcal{B}_\lambda$ is a $\sigma $%
-complete Boolean algebra homomorphism.
\end{lemma}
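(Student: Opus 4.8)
The plan is to verify the four defining properties of a $\sigma$-complete Boolean algebra homomorphism---preservation of the top element, of finite unions, of complements, and of countable unions---by transferring each from the Borel measure $m$ on $\mathcal{B}(G^0)$ to the measure algebra $\mathcal{B}_\lambda$, using throughout the identity
\[
\lambda(\Phi(E)) = m(E) \qquad (E \in \mathcal{B}(G^0)),
\]
which I would establish first. Since $\Phi$ is monotone (immediate from the defining infimum $\Phi(E)=\bigwedge\{\Phi(U)\colon U\in\mathcal{U},\,U\supseteq E\}$) and the family $\{U\in\mathcal{U}\colon U\supseteq E\}$ is downward directed, the family $\{\Phi(U)\}_{U\supseteq E}$ is downward directed in the complete measure algebra $\mathcal{B}_\lambda$. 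Continuity from above for downward-directed families in the measure algebra of a finite measure (which reduces to the countable case by extracting a cofinal decreasing sequence) then gives $\lambda(\Phi(E))=\inf_{U\supseteq E}\lambda(\Phi(U))=\inf_{U\supseteq E}m(U)=m(E)$, the last equality being the outer-regular definition of $m$ furnished by Proposition 3.2.7 of \cite{paterson_groupoids_1999}.

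Next I would record $\Phi(G^0)=1$. By regularity (Equation~\eqref{Equation: regular relation}) the idempotent $\Phi(G^0)$ is the supremum of the increasing net $(\rho(W))$ over $W\in E(\Sigma_c)$ with $\overline W\subseteq G^0$, and by the Remark following Definition~\ref{Definition: tight representation inverse semigroup} this net---indeed the whole net $(\rho(\sigma))_{\sigma\in E(\Sigma)}$---converges in the strong operator topology to the identity. Since the SOT-limit of an increasing net of hermitian idempotents is its supremum, we conclude $\Phi(G^0)=1$.

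The heart of the argument is preservation of finite unions, and this is the step where I expect the main obstacle: one cannot simply intersect over approximating open sets, because $\mathcal{B}_\lambda$ is not completely distributive, so the naive identity $\bigwedge_{U\supseteq E,\,V\supseteq F}(\Phi(U)\vee\Phi(V))=\Phi(E)\vee\Phi(F)$ is unavailable. Instead I would argue by measure. Monotonicity gives $\Phi(E)\vee\Phi(F)\le\Phi(E\cup F)$, so by strict positivity of $\lambda$ it suffices to prove $\lambda(\Phi(E\cup F))\le\lambda(\Phi(E)\vee\Phi(F))$. Fixing $\varepsilon>0$, outer regularity lets me choose opens $U\supseteq E$ and $V\supseteq F$ with $\lambda(\Phi(U)\setminus\Phi(E))<\varepsilon$ and $\lambda(\Phi(V)\setminus\Phi(F))<\varepsilon$; then $\Phi(E\cup F)\le\Phi(U\cup V)=\Phi(U)\vee\Phi(V)$ (using that $\Phi$ is additive on disjoint opens, so $\Phi(U\cup V)=\Phi(U)\vee\Phi(V)$), and writing $\Phi(U)\vee\Phi(V)=\bigl(\Phi(E)\vee\Phi(F)\bigr)\vee\bigl(\Phi(U)\setminus\Phi(E)\bigr)\vee\bigl(\Phi(V)\setminus\Phi(F)\bigr)$ yields $\lambda(\Phi(E\cup F))\le\lambda(\Phi(E)\vee\Phi(F))+2\varepsilon$. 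Letting $\varepsilon\to0$ completes this step.

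Finally I would deduce complements and countable unions. For complements, finite-union preservation gives $\Phi(E)\vee\Phi(E^c)=\Phi(G^0)=1$, while inclusion--exclusion together with $\lambda\circ\Phi=m$ gives $\lambda(\Phi(E)\wedge\Phi(E^c))=m(E)+m(E^c)-\lambda(1)=0$, whence $\Phi(E^c)=\lnot\Phi(E)$; De Morgan then upgrades this to intersection preservation, so $\Phi$ is a homomorphism of Boolean algebras. For $\sigma$-completeness it is enough, after replacing a countable union by its increasing partial unions, to treat an increasing sequence $E_1\subseteq E_2\subseteq\cdots$. Monotonicity gives $\bigvee_n\Phi(E_n)\le\Phi\bigl(\bigcup_nE_n\bigr)$, and since both $m$ and $\lambda$ are continuous from below, $\lambda\bigl(\Phi(\bigcup_nE_n)\bigr)=m\bigl(\bigcup_nE_n\bigr)=\lim_n m(E_n)=\lim_n\lambda(\Phi(E_n))=\lambda\bigl(\bigvee_n\Phi(E_n)\bigr)$; strict positivity of $\lambda$ forces equality, finishing the proof.
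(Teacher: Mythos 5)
Your proof is correct, and it runs on the same engine as the paper's---outer approximation of Borel sets by open sets combined with the identity $\lambda\circ\Phi=m$---but the decomposition into Boolean operations is dual. The paper proves that $\Phi$ preserves finite \emph{intersections} (via an $\varepsilon$-argument choosing an open $U\supseteq E_0\cap E_1$ and opens $V_j\supseteq E_j\setminus(E_0\cap E_1)$) and then countable \emph{pairwise disjoint} unions by comparing $\lambda\left(\Phi\left(\bigcup_n E_n\right)\right)=m\left(\bigcup_n E_n\right)=\sum_n m(E_n)$ with $\lambda\left(\bigcup_n\Phi(E_n)\right)$; it leaves the unit, complements, and the identity $\lambda\circ\Phi=m$ on Borel sets implicit. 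You instead prove preservation of finite \emph{unions}, deduce complements by inclusion--exclusion, and handle countable unions through increasing sequences; you also make explicit the two facts the paper uses silently, namely $\lambda(\Phi(E))=\inf_{U\supseteq E}\lambda(\Phi(U))=m(E)$ (continuity from above along the downward directed family of open neighbourhoods of $E$) and $\Phi(G^0)=1$, so your version is the more self-contained of the two. One citation should be repaired: in the finite-union step you justify $\Phi(U\cup V)=\Phi(U)\vee\Phi(V)$ for the (generally non-disjoint) opens $U$ and $V$ by appealing to additivity on \emph{disjoint} opens, which does not apply; the inequality you actually need, $\Phi(U\cup V)\leq\Phi(U)\vee\Phi(V)$, is exactly the two-set case of the subadditivity in Equation~\eqref{Equation: countable subadditivity}, and together with monotonicity it yields the equality. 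With that reference corrected the argument is complete.
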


\begin{proof}
We claim that given $E_{0}$ and $E_{1}$ in $\mathcal{B}(G^{0})$, we have $%
\Phi (E_{0}\cap E_{1})=\Phi (E_{0})\cap \Phi (E_{1})$. \newline
\indent To prove the claim, observe that if $U_{j}$ is an open set
containing $E_{j}$ for $j\in \{0,1\}$, then $\Phi (U_{0}\cap U_{1})=\Phi
(U_{0})\cap \Phi (U_{1})$, and thus $\Phi (E_{0}\cap E_{1})\subseteq \Phi
(E_{0})\cap \Phi (E_{1}).$ In order to prove that equality holds, it is
enough to show that given $\varepsilon >0$, we have 
\begin{equation*}
\lambda \left( \Phi (E_{0}\cap E_{1})\right) \geq \lambda \left( \Phi \left(
E_{0}\right) \cap \Phi \left( E_{1}\right) \right) -\varepsilon .
\end{equation*}%
Fix an open set $U$ containing $E_{0}\cap E_{1}$ such that $m(U)\leq
m(E_{0}\cap E_{1})+\varepsilon $. Let $V_{0}$ and $V_{1}$ be open sets
satisfying $E_{j}\setminus (E_{0}\cap E_{1})\subseteq V_{j}$ for $j=0,1$,
and $\mu (V_{j})\leq \mu (E_{j}\setminus (E_{0}\cap E_{1}))+\varepsilon $.
For $j=0,1$, set $U_{j}=U\cup V_{j}$. Then $U_{j}\supseteq E_{j}$ and%
\begin{align*}
\lambda (\Phi (E_{0}\cap E_{1}))& =m(E_{0}\cap E_{1})\geq m(U)-\varepsilon
\geq m(U_{0}\cap U_{1})-3\varepsilon \\
& =\lambda (\Phi (U_{0}\cap U_{1}))-3\varepsilon =\lambda (\Phi (U_{0})\cap
\Phi (U_{1}))-3\varepsilon \geq \lambda (\Phi (E_{0})\cap \Phi
(E_{1}))-3\varepsilon \text{.}
\end{align*}%
We have therefore shown that $\Phi (E_{0}\cap E_{1})=\Phi (E_{0})\cap \Phi
(E_{1})$, so the claim is proved. It remains to show that if $(E_{n})_{n\in
\omega }$ is a sequence of pairwise disjoint Borel subsets of $G^{0}$, then 
\begin{equation*}
\Phi \left( \bigcup\limits_{n\in \omega }E_{n}\right) =\bigcup\limits_{n\in
\omega }\Phi \left( E_{n}\right) \text{.}
\end{equation*}%
By Equation~\eqref{Equation: countable subadditivity}, the left-hand side is
contained in the right-hand side. On the other hand, we have 
\begin{equation*}
\lambda \left( \Phi \left( \bigcup\limits_{n\in \omega }E_{n}\right) \right)
=m\left( \bigcup\limits_{n\in \omega }E_{n}\right) =\sum\limits_{n\in \omega
}m(E_{n})=\sum\limits_{n\in \omega }\lambda (\Phi (E_{n}))=\lambda \left(
\bigcup\limits_{n\in \omega }\Phi (E_{n})\right) ,
\end{equation*}%
so we conclude that equality holds, and the proof is complete.
\end{proof}

By \cite[Theorem~15.9]{kechris_classical_1995}, there is a Borel function $%
q\colon Z\rightarrow G^{0}$ such that $\Phi (E)=q^{-1}(E)$ for every $E\in 
\mathcal{B}(G^{0})$. Moreover, the map $q$ is unique up to $\lambda $-almost
everywhere equality.

\subsection{Measure}

Define a Borel probability measure $\mu$ on $G^0$ by $\mu =q_{\ast
}(\lambda) $. Consider the disintegration $\lambda =\int \lambda _{x}\ d\mu
(x) $ of $\lambda $ with respect to $\mu $, the Borel Banach bundle $%
\mathcal{Z}=\bigsqcup\limits_{x\in G^{0}}L^{p}(\lambda_x)$, and identify $%
L^{p}(\lambda) $ with $L^{p}(\mu,\mathcal{Z})$ as in Theorem~\ref{thm: BBb
structure}.

For $A\in \Sigma $, denote by $\theta _{A}\colon A^{-1}A\rightarrow AA^{-1}$
the homomorphism defined by $\theta _{A}(x)=r(Ax)$ for $x\in A^{-1}A$. Since 
$\rho (A)$ is a spatial partial isometry with domain $\Phi (s(A))$ and range 
$\Phi (r(A))$, there are a Borel function $g_{A}\colon \Phi
(r(A))\rightarrow \mathbb{C}$ and a Borel isomorphism $\phi _{A}\colon \Phi
(s(A))\rightarrow \Phi (r(A))$ such that 
\begin{equation}
(\rho (A)\xi )_{z}=g_{A}(z)\xi (\phi _{A}^{-1}(z))\text{\label{Equation:
spatial partial isometry}}
\end{equation}%
for $z\in \Phi (r(A))$. We claim that $(q\circ \phi _{A})(z)=(\theta
_{A}\circ q)(z)$ for $\lambda $-almost every $z\in \Phi (r(A))$. By the
uniqueness assertion in \cite[Theorem~15.9]{kechris_classical_1995}, it is
enough to show that $(\theta _{A}\circ q\circ \phi _{A}^{-1})^{-1}(U)=\Phi
(U)$ for every $U\in E(\Sigma )$ with $U\subseteq r(A)$. We have 
\begin{equation*}
(\theta _{A}\circ q\circ \phi _{A}^{-1})^{-1}(U)=(\phi _{A}\circ q^{-1}\circ
\theta _{A}^{-1})(U)=\phi _{A}(\Phi (\theta _{A}^{-1}(U)))=\phi _{A}(\Phi
(A^{-1}UA)).
\end{equation*}%
Given $\xi \in L^{p}(\lambda |_{\Phi (r(A))})$, set $\eta =\xi \circ \phi
_{A}$. Then 
\begin{align*}
\Delta _{\chi _{\phi _{A}\left( \Phi \left( A^{-1}UA\right) \right) }}\xi &
=\left( \Delta _{\chi _{\Phi \left( A^{-1}UA\right) }}\eta \right) \circ
\phi _{A}^{-1}=\left( \rho \left( A^{-1}UA\right) \eta \right) \circ \phi
_{A}^{-1}=\left( \rho (A)^{-1}\rho (U)\rho (A)\eta \right) \circ \phi
_{A}^{-1} \\
& =\left( \rho (A)^{-1}\rho (U)g_{A}\xi \right) \circ \phi _{A}^{-1}=\left(
\rho (A)^{-1}\chi _{\Phi (U)}g_{A}\xi \right) \circ \phi _{A}^{-1} \\
& =\left( \left( g_{A}\circ \phi _{A}\right) ^{-1}\left( \chi _{\Phi
(U)}\circ \phi _{A}\right) \left( g_{A}\circ \phi _{A}\right) \eta \right)
\circ \phi _{A}^{-1}=\chi _{\Phi (U)}\xi =\Delta _{\chi _{\Phi (U)}}\xi 
\text{.}
\end{align*}%
Thus $\Phi (U)=\phi _{A}(\Phi (A^{-1}UA))=(\theta _{A}\circ q\circ \phi
_{A}^{-1})^{-1}(U)$, and hence $(\theta _{A}\circ q\circ \phi
_{A}^{-1})(z)=q(z)$ for $\lambda $-almost every $z\in \Phi (r(A))$, as
desired. The claim is proved. It is shown in \cite[Proposition 3.2.2]%
{paterson_groupoids_1999} that $\mu $ is quasi-invariant whenever $(\theta
_{A})_{\ast }\mu |_{s(A)}\sim \mu |_{r(A)}$ for every open bisection $A$ of $%
G$. The same proof in fact shows that it is sufficient to check this
condition for $A\in \Sigma $. Given $A\in \Sigma $, we have 
\begin{eqnarray*}
\mu |_{r(A)} &=&q_{\ast }\lambda |_{\Phi (r(A))}\sim q_{\ast }((\phi
_{A})_{\ast }\lambda |_{\Phi (s(A))})=(q\circ \phi _{A})_{\ast }\lambda
|_{\Phi (s(A))}=(\theta _{A}\circ q)_{\ast }\lambda |_{\Phi (s(A))} \\
&=&(\theta _{A})_{\ast }(q_{\ast }\lambda |_{\Phi (s(A))})=(\theta
_{A})_{\ast }\mu |_{s(A)}\text{,}
\end{eqnarray*}%
so $\mu $ is quasi-invariant.

\subsection{Disintegration}

For $x\in G^{0}$, set $Z_{x}=q^{-1}(\{x\})$, and note that $Z_{x}=\Phi
(\{x\})$. Given $A\in \Sigma $, regard $\rho (A)$ as a surjective linear
isometry 
\begin{equation*}
\rho (A)\colon L^{p}(\lambda |_{\Phi (s(A))})\rightarrow L^{p}(\lambda
|_{\Phi (r(A))}).
\end{equation*}%
Let $\mathcal{Z}$ denote the Borel Banach bundle $\bigsqcup\nolimits_{x\in
G^{0}}L^{p}(\lambda _{x})$, and identify $L^{p}(\lambda |_{\Phi (s(A))})$
and $L^{p}(\lambda |_{\Phi (r(A))})$ with $L^{p}(\mu |_{s(A)},\mathcal{Z}%
|_{s(A)})$ and $L^{p}(\mu |_{r(A)},\mathcal{Z}|_{r(A)})$, respectively. If $%
U\in E(\Sigma )$ satisfies $U\subseteq r(A)$, one uses $\rho (A^{-1}UA)=\rho
(A)^{-1}\rho (U)\rho (A)$ to show that 
\begin{equation*}
\Delta _{U}\circ \rho (A)=\rho (A)\circ \Delta _{\theta _{A}(U)}\text{.}
\end{equation*}%
By Theorem~\ref{Theorem: Guichardet}, there is a Borel section $x\mapsto
T_{x}^{A}$ of $B(\mathcal{Z}|_{s(A)},\mathcal{Z}|_{r(A)},\theta _{A})$
consisting of invertible isometries, such that 
\begin{equation*}
(\rho (A)\xi )|_{Z_{y}}=\left( \frac{d(\theta _{A})_{\ast }\mu }{\ d\mu }%
(y)\right) ^{\frac{1}{p}}T_{\theta _{A}^{-1}(y)}^{A}\xi |_{Z_{\phi ^{-1}(y)}}
\end{equation*}%
for $\mu $-almost every $y\in r(A)$. Since 
\begin{equation*}
\left( \rho (A)\xi \right) |_{Z_{y}}=\left( g_{A}\right) |_{Z_{y}}\cdot
\left( \xi |_{Z_{y}}\circ \left( (\phi _{A})|_{Z_{\theta
_{A}^{-1}(y)}}^{|Z_{y}}\right) ^{-1}\right)
\end{equation*}%
for $\mu $-almost every $y\in r(A)$, we have 
\begin{equation*}
T_{x}^{A}\xi =\left( \frac{d(\theta _{A})_{\ast }\mu }{\ d\mu }\left( \theta
_{A}(x)\right) \right) ^{\frac{1}{p}}\left( g_{A}\right) |_{Z_{\theta
_{A}(x)}}\left( \xi \circ \left( (\phi _{A})|_{Z_{x}}^{|Z_{\theta
_{A}(x)}}\right) ^{-1}\right)
\end{equation*}%
for $\mu $-almost every $x\in s(A)$. Fix $A,B\in \Sigma $. Since $\rho $ is
a representation, $\rho (AB)=\rho (A)\rho (B)$. Therefore it follows from
the uniqueness of the direct integral representation of a decomposable
operator---see Remark \ref{Remark:uniqueness}---that $T_{x}^{AB}=T_{\theta
_{B}(x)}^{A}T_{x}^{B}$ for $\mu $-a.e. $x\in s(AB)$. Similarly if $A,B\in
\Sigma $ and $U\in E(\Sigma )$ is such that $AU=BU$, then the uniqueness of
the direct integral representation of a decomposable operator shows that $%
T_{x}^{A}=T_{x}^{B}$ for $\mu $-a.e. $x\in U$. Since $E(\Sigma )$ is a basis
for the topology of $G^{0}$ we conclude that $T_{x}^{A}=T_{x}^{B}$ for $\mu $%
-a.e. $x\in s(A)\cap s(B)$ such that $Ax=Bx$. A similar argument shows that,
if $A\in \Sigma $, then $(T_{x}^{A})^{-1}=T_{\theta _{A}(x)}^{A^{-1}}$ for $%
\mu $-a.e. $x\in s(A)$. It follows that, up to discarding a $\nu $-null set,
the assignment $T\colon G\rightarrow \mathrm{Iso}(\mathcal{Z})$ given by $%
T_{\gamma }=T_{s(\gamma )}^{A}$ for some $A\in \Sigma $ containing $\gamma $%
, determines a representation of $G$ on $\mathcal{Z}$. Indeed let $X$ be the
set of $x\in G^{0}$ such that

\begin{enumerate}
\item for every $A,B\in \Sigma $ such that $x\in s(AB)$, $%
T_{x}^{AB}=T_{\theta _{B}(x)}^{A}T_{x}^{B}$,

\item for every $A\in \Sigma $ such that $x\in s(A)$, $(T_{x}^{A})^{-1}=T_{%
\theta _{A}(x)}^{A^{-1}}$,

\item for every $A,B\in \Sigma $ such that $x\in s(A)\cap s(B)$ and $Ax=Bx$, 
$T_{x}^{A}=T_{x}^{B}$.
\end{enumerate}

Then by the discussion above and since $\Sigma $ is countable, $X$ is a $\mu 
$-conull subset of $G^{0}$. We claim that the restriction of $T$ to $G|_{X}$
is a groupoid homomorphism. Indeed if $\gamma ,\rho $ are elements of $%
G|_{X} $ such that $r(\rho )=s(\gamma )$ and $B,A\in \Sigma $ are such that $%
\gamma \in A$ and $\rho \in B$, then, since $s(\rho ),s(\gamma )\in X$, by
(1) and (3) we get that $T_{\gamma }T_{\rho }=T_{s(\gamma )}^{A}T_{s(\rho
)}^{B}=T_{s(\rho )}^{AB}=T_{\gamma \rho }$. Similarly applying (2) and (3)
one obtains that $T_{\gamma }^{-1}=T_{\gamma ^{-1}}$ for any $\gamma \in
G|_{X}$. This concludes the proof that the restriction of $T$ to $G|_{X}$ is
a groupoid homomorphism. If $A\in \Sigma $ then the maps $\gamma \mapsto
T_{s(\gamma )}^{A}$ and $\gamma \mapsto T_{\gamma }$ agree on $A\cap G|_{X}$%
. Since $x\mapsto T_{x}^{A}$ is a Borel map on $s(A)$ and $\Sigma $ is a
countable basis for the topology of $G$, it follows that the function $%
\gamma \mapsto T_{\gamma }$ is Borel on $G|_{X}$. This concludes the proof
that $T$ is a representation of $G$ on the Banach bundle $\mathcal{Z}$ in
the sense of Definition \ref{Definition: representation on Banach bundle}.
It is a consequence of Equation~\eqref{Equation: spatial partial isometry}
that%
\begin{equation*}
\left\langle \rho (A)\xi ,\eta \right\rangle =\int D\left( xA\right) ^{-%
\frac{1}{p}}\left\langle T_{xA}\xi _{\theta _{A}^{-1}(x)},\eta
_{x}\right\rangle \ d\mu (x)\text{,}
\end{equation*}%
for every $\xi \in L^{p}(\mathcal{Z},\mu )$ and every $\eta \in L^{p^{\prime
}}(\mu ,\mathcal{Z}^{\prime })$. This concludes the proof of Theorem~\ref%
{Theorem: disintegration representation inverse}.

\subsection{Correspondence}

Let as above $G$ be an étale groupoid, and $\Sigma _{c}\left( G\right) $ be
the inverse semigroup of precompact open bisections of $G$. Let $\Sigma $ be
an inverse subsemigroup of $\Sigma _{c}\left( G\right) $ that forms a basis
for the topology of $G$.

Suppose that $\pi $ is a contractive representation of $C_{c}(G)$ on $%
L^{p}(\lambda )$. It is shown in Subsection \ref{Subsection: integrated
representation slices} that, identifying $L^{p}\left( \lambda \right) $ with 
$L^{p}\left( \mu ,\mathcal{Z}\right) $, $\pi $ induces a tight, regular
representation of $\Sigma _{c}\left( G\right) $ on $L^{p}\left( \lambda
\right) $.\ Since $\Sigma $ forms a basis for the topology of $G$, the
restriction $\rho _{\pi }$ of such a representation to $\Sigma $ is still
tight by Lemma \ref{Lemma: dense}.

Let now $(T,\mu )$ be a representation of $G$ on an $L^{p}$-bundle $%
\bigsqcup\nolimits_{x\in G^{0}}L^{p}(\lambda _{x})$. Setting $\lambda :=\int
\lambda _{x}d\mu \left( x\right) $, one can identify $L^{p}\left( \mu ,%
\mathcal{Z}\right) $ with $L^{p}\left( \lambda \right) $ by Theorem \ref%
{thm: BBb structure}. Then one can consider the integrated form $\pi _{T}$
of $T$, which is a representation of $C_{c}\left( G\right) $ on $L^{p}\left(
\lambda \right) $. Then we set, following the notation above, $\rho
_{T}:=\rho _{\pi _{T}}$, which is a tight, regular representation of $\Sigma 
$ on $L^{p}\left( \lambda \right) $. An inspection of the definition of $%
\rho _{\pi }$ from Subsection \ref{Subsection: integrated representation
slices} shows that one can explicitly define $\rho _{T}$ via the formula 
\begin{equation*}
\langle \rho _{T}(A)\xi ,\eta \rangle =\int_{r(A)}D^{-\frac{1}{p}%
}(xA)\left\langle T_{xA}\xi _{\theta _{A}^{-1}(x)},\eta _{x}\right\rangle \
d\mu (x)
\end{equation*}%
for all $A\in \Sigma $, for all $\xi \in L^{p}(\mu ,\mathcal{Z})$, and all $%
\eta \in L^{p^{\prime }}(\mu ,\mathcal{Z}^{\prime })$.

\begin{theorem}
\label{Theorem: correspondence representations} Adopt the notation of the
comments above.

\begin{enumerate}
\item The assignment $T\mapsto \rho_T$ determines a bijective correspondence
between representations of $G$ on $L^p$-bundles and tight regular
representations of $\Sigma $ on $L^p$-spaces.

\item The assignment $\pi \mapsto \rho _{\pi }$ determines a bijective
correspondence between contractive representations of $C_{c}(G)$ on $L^{p}$%
-spaces and tight regular representations of $\Sigma $ on $L^{p}$ spaces.

\item The assignment $T\mapsto \pi _{T}$ is a bijective correspondence
between representations of $G$ on $L^{p}$-bundles and contractive
representations of $C_{c}(G)$ on $L^{p}$-spaces.
\end{enumerate}

Moreover, the correspondences in (1), (2), and (3) preserve the natural
relations of equivalence of representations.
\end{theorem}

\begin{proof}
First we show that, given a representation $T$ of $G$ on an $L^{p}$-bundle $%
\mathcal{Z}$, the corresponding representation $\rho _{T}$ of $\Sigma $ is
right. Consider the repr...

(1). This is an immediate consequence of the Disintegration Theorem~\ref%
{Theorem: disintegration representation inverse}.

(2). Suppose that $\rho $ is a tight representation of $\Sigma $ on $%
L^{p}(\lambda )$. Applying the Disintegration Theorem~\ref{Theorem:
disintegration representation inverse} one obtains a representation $\left(
\mu ,T\right) $ of $G$ on the bundle $\bigsqcup_{x\in G^{0}}L^{p}(\lambda
_{x})$ for a disintegration $\lambda =\int \lambda _{x}d\mu \left( x\right) $%
. One can then assign to $\rho $ the integrated form $\pi _{\rho }$ of $%
\left( \mu ,T\right) $. It is easy to verify that the maps $\rho \mapsto \pi
_{\rho }$ and $\pi \mapsto \rho _{\pi }$ are mutually inverse.

Finally (3) follows from combining (1) and (2).
\end{proof}

\section{\texorpdfstring{$L^p$}{Lp}-operator algebras of étale groupoids}

Throughout this section, we fix a H\"older exponent $p\in (1,\infty)$.

\subsection{$L^p$-operator algebras\label{Subsection: Lp operator algebras}}

\begin{definition}
A \emph{concrete }$L^{p}$-\emph{operator algebra} is a subalgebra $A$ of $%
B(L^{p}(\lambda ))$ for some $\sigma $-finite Borel measure $\lambda $ on a
standard Borel space. The identification of $M_{n}(A)$ with a subalgebra of $%
B(L^{p}(\lambda ^{(n)}))$ induces a norm on $M_{n}(A)$. The collection of
such norms defines a $p$-operator space structure on $A$ as in \cite[Section
4.1]{daws_p-operator_2010}. Moreover the multiplication on $A$ is a $p$%
-completely contractive bilinear map. Equivalently $M_{n}(A)$ is a Banach
algebra for every $n\in \mathbb{N}$.

An \emph{abstract $L^{p}$-operator algebra} is a Banach algebra $A$ endowed
with a $p$-operator space structure, which is $p$-completely isometrically
isomorphic to a concrete $L^{p}$-operator algebra.
\end{definition}

Let $A$ be a separable matricially normed algebra and let $\mathcal{R}$ be a
collection of $p$-completely contractive nondegenerate representations of $A$
on $L^{p}$-spaces. Set $I_{\mathcal{R}}=\bigcap\limits_{\pi \in \mathcal{R}}%
\mathrm{Ker}(\pi )$. Then $I_{\mathcal{R}}$ is an ideal in $A$. Arguing as
in \cite[Section 1.2.16]{blecher_operator_2004}, the completion $F^{\mathcal{%
R}}(A)$ of $A/I_{\mathcal{R}}$ with respect to the norm%
\begin{equation*}
\| a+I_{\mathcal{R}}\| =\sup \{\| \pi (a)\| \colon \pi \in \mathcal{R}\}
\end{equation*}%
for $a\in A$, is a Banach algebra. Moreover, $F^{\mathcal{R}}(A)$ has a
natural $p$-operator space structure that makes it into an (abstract) $L^{p}$%
-operator algebra.

\begin{remark}
If $\mathcal{R}$ separates the points of $A$, then the ideal $I_{\mathcal{R}%
} $ is trivial, and hence the canonical map $A\to F^{\mathcal{R}}(A) $ is an
injective $p$-completely contractive homomorphism.
\end{remark}

\begin{definition}
Let $\mathcal{R}^{p}$ be the collection of \emph{all }$p$-completely
contractive nondegenerate representations of $A$ on $L^{p}$-spaces
associated with $\sigma $-finite Borel measures on standard Borel spaces.
Then $F^{\mathcal{R}^{p}}(A)$ is abbreviated to $F^{p}(A)$, and called the 
\emph{enveloping $L^{p}$-operator algebra} of $A$.
\end{definition}

Suppose further that $A$ is a matricially normed *-algebra with a completely
isometric \emph{linear} involution $a\mapsto \check{a}$. (For example, for
an \'etale groupoid $G$, we endow $C_c(G)$ with its $I$-norm and the linear
involution $f\mapsto \check{f}$ defined before Definition~\ref{Definition:
dual representation}.) If $\pi \colon A\rightarrow B(L^{p}(\lambda ))$ is a $%
p$-completely contractive nondegenerate representation as before, then the
dual representation of $\pi $ is the $p^{\prime }$-completely contractive
nondegenerate representation $\pi ^{\prime }$ given by $\pi ^{\prime
}(a)=\pi (a^{\ast })^{\prime }$ for all $a\in A$.\newline
\indent Let $\mathcal{R}$ be a collection of $p$-completely contractive
nondegenerate representations of $A$ on $L^{p}$-spaces, and denote by $%
\mathcal{R}^{\prime }$ the collection of duals of elements of $\mathcal{R}$.
It is immediate that the involution of $A$ extends to a $p$-completely
isometric anti-isomorphism $F^{\mathcal{R}}(A)\rightarrow F^{\mathcal{R}%
^{\prime }}(A)$. Finally, since $(\mathcal{R}^{p})^{\prime }=\mathcal{R}%
^{p^{\prime }}$, the discussion above shows that the involution of $A$
extends to a $p$-completely isometric anti-isomorphism $F^{p}(A)\rightarrow
F^{p^{\prime }}(A)$.

\subsection{The full $L^p$-operator algebra of an étale groupoid}

Let $G$ be an étale groupoid. Recall that we can regard $C_{c}(G)$ as a
matricially normed *-algebra where $M_{n}(C_{c}(G))$ is endowed with the $I$%
-norm described in Subsection \ref{Subsection: Amplification of
representations}.

\begin{definition}
\label{Definition:full}We define the \emph{full }$L^{p}$-\emph{operator
algebra }$F^{p}(G)$ of $G$ to be the enveloping $L^{p}$-operator algebra of
the matricially normed *-algebra $C_{c}(G)$.
\end{definition}

\begin{remark}
By Proposition \ref{Proposition: separates points}, when $G$ is Hausdorff
the family of $p$-completely contractive nondegenerate representations of $%
C_{c}(G)$ on $L^{p}$-spaces separates the points of $C_{c}(G)$, and hence
the canonical map $C_{c}(G)\rightarrow F^{p}(G)$ is injective.
\end{remark}

The proof of the following is straightforward, and is left to the reader.

\begin{proposition}
\label{Proposition: correspondence Cc and Fp} The correspondence sending a $%
p $-completely contractive representation of $F^{p}(G)$ on an $L^{p}$-space
to its restriction to $C_{c}(G)$, is a bijective correspondence between $p$%
-completely contractive representations of $F^{p}(G)$ on $L^{p}$-spaces and $%
p$-completely contractive representations of $C_{c}(G)$ on $L^{p}$-spaces.
\end{proposition}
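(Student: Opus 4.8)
The plan is to deduce the statement directly from the universal property built into the definition of the enveloping $L^{p}$-operator algebra $F^{p}(G)=F^{\mathcal{R}^{p}}(C_{c}(G))$. Write $\iota\colon C_{c}(G)\to F^{p}(G)$ for the canonical map. By construction $F^{p}(G)$ is the completion of $C_{c}(G)/I_{\mathcal{R}^{p}}$, so $\iota$ is a $p$-completely contractive homomorphism with dense range, and it is injective by the remark preceding the proposition (using that the left regular representations separate the points of $C_{c}(G)$). Since $C_{c}(G)$ has a contractive approximate identity, $\iota$ carries it to a contractive approximate identity of $F^{p}(G)$; consequently a $p$-completely contractive representation of $F^{p}(G)$ is nondegenerate if and only if its composition with $\iota$ is. This shows that $\Pi\mapsto\Pi\circ\iota$ does map $p$-completely contractive (nondegenerate) representations of $F^{p}(G)$ to $p$-completely contractive (nondegenerate) representations of $C_{c}(G)$, so the correspondence is well defined.

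First I would check injectivity of the correspondence: if $\Pi_{1}\circ\iota=\Pi_{2}\circ\iota$, then $\Pi_{1}$ and $\Pi_{2}$ agree on the dense subalgebra $\iota(C_{c}(G))$, and since both are contractive, hence continuous, they agree on all of $F^{p}(G)$.

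The main step is surjectivity. Given a $p$-completely contractive nondegenerate representation $\pi\colon C_{c}(G)\to B(L^{p}(\lambda))$, one has $\pi\in\mathcal{R}^{p}$, so $I_{\mathcal{R}^{p}}=\bigcap_{\sigma\in\mathcal{R}^{p}}\mathrm{Ker}(\sigma)\subseteq\mathrm{Ker}(\pi)$ and $\pi$ factors through a homomorphism on $C_{c}(G)/I_{\mathcal{R}^{p}}$. By the very definition of the matrix norms on $F^{p}(G)$, for every $n\geq 1$ and every $[f_{ij}]\in M_{n}(C_{c}(G))$ one has $\|\pi^{(n)}([f_{ij}])\|\leq\sup_{\sigma\in\mathcal{R}^{p}}\|\sigma^{(n)}([f_{ij}])\|$, and the right-hand side is, by definition, the norm of the image of $[f_{ij}]$ in $M_{n}(F^{p}(G))$. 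Thus the induced homomorphism on the quotient is $p$-completely contractive for the $F^{p}(G)$ matrix norms, and hence extends uniquely, by continuity together with density of $\iota(C_{c}(G))$, to a $p$-completely contractive homomorphism $\Pi\colon F^{p}(G)\to B(L^{p}(\lambda))$ with $\Pi\circ\iota=\pi$; nondegeneracy of $\Pi$ follows from that of $\pi$. This exhibits $\pi$ as the restriction of $\Pi$, proving surjectivity.

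The only point requiring genuine, though routine, care is the norm comparison in the surjectivity step: one must match the $p$-complete contractivity of $\pi$ against the precise description of the $p$-operator space structure on $F^{p}(G)$, namely that the norm on $M_{n}(F^{p}(G))$ is the supremum over $\sigma\in\mathcal{R}^{p}$ of the norms of the amplifications $\sigma^{(n)}$. Once this identification is in hand, the extension is a standard completion argument and the claimed bijection follows.
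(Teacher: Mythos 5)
Your argument is correct and is exactly the standard universal-property argument the authors have in mind; indeed the paper omits the proof entirely, declaring it "straightforward and left to the reader." The only point worth flagging is that the correspondence is really between \emph{nondegenerate} representations (since $\mathcal{R}^{p}$ consists of nondegenerate ones), and your observation that nondegeneracy passes back and forth via the contractive approximate identity of $C_{c}(G)$ handles this.
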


\begin{definition}
Let $\Sigma $ be an inverse semigroup, and consider the matricially normed
*-algebra structure on $\mathbb{C}\Sigma $ described in Definition \ref%
{Definition:CSIGMA}. Denote by $\mathcal{R}_{\mathrm{tight}}^{p}$ the
collection of tight representations of $\Sigma $ on $L^{p}$-spaces. We
define the \emph{tight enveloping $L^{p}$-operator algebra} of $\Sigma $,
denoted $F_{\mathrm{tight}}^{p}(\Sigma )$, to be $F^{\mathcal{R}_{\mathrm{%
tight}}^{p}}(\mathbb{C}\Sigma )$.
\end{definition}

\begin{remark}
Since the dual of a tight representation is also tight, it follows that the
involution on $\mathbb{C}\Sigma $ extends to a $p$-completely isometric
anti-isomorphism $F_{\mathrm{tight}}^{p}(\Sigma )\rightarrow F_{\mathrm{tight%
}}^{p^{\prime }}(\Sigma )$.
\end{remark}

From Theorem \ref{Theorem: correspondence representations} we can deduce the
following corollary.

\begin{corollary}
\label{Corollary: automatically completely}If $A$ is an $L^{p}$-operator
algebra, then any contractive homomorphism from $C_{c}(G)$ or $F^{p}(G)$ to $%
A$ is automatically $p$-completely contractive.
\end{corollary}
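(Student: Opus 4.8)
The plan is to reduce to a concrete target and then push a contractive representation through the correspondence of Theorem~\ref{Theorem: correspondence representations}, so that complete contractivity becomes a statement about the integrated form of an \emph{amplified} groupoid representation. Since $A$ is an abstract $L^{p}$-operator algebra it admits a $p$-completely isometric embedding $A\hookrightarrow B(L^{p}(\lambda))$ for some $\sigma$-finite Borel measure $\lambda$; as a $p$-complete isometry reflects $p$-complete contractivity (if $j$ is $p$-completely isometric and $j\circ\varphi$ is $p$-completely contractive then so is $\varphi$), it suffices to treat $A=B(L^{p}(\lambda))$. A contractive homomorphism out of $C_{c}(G)$ then becomes an $I$-norm contractive nondegenerate representation $\pi\colon C_{c}(G)\to B(L^{p}(\lambda))$, and I must show each amplification $\pi^{(n)}$ is contractive for the matricial norm on $C_{c}(G)$, which under $M_{n}(C_{c}(G))\cong C_{c}(G_{n})$ is precisely the $I$-norm on $C_{c}(G_{n})$.

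First I would dispatch $C_{c}(G)$. By Theorem~\ref{Theorem: correspondence representations}(3), $\pi$ is the integrated form $\pi_{T}$ of a representation $(\mu,T)$ of $G$ on an $L^{p}$-bundle $\mathcal{Z}$ (after identifying $L^{p}(\lambda)$ with $L^{p}(\mu,\mathcal{Z})$). By the amplification result of Subsection~\ref{Subsection: Amplification of representations}, under the canonical identifications $M_{n}(C_{c}(G))\cong C_{c}(G_{n})$ and $\ell^{p}(n,L^{p}(\mu,\mathcal{Z}))\cong L^{p}(\mu^{(n)},\mathcal{Z}^{(n)})$, the amplification $\pi^{(n)}=\pi_{T}^{(n)}$ is equivalent to $\pi_{T^{(n)}}$, the integrated form of the amplified representation $T^{(n)}$ of the amplified groupoid $G_{n}$. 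Now Theorem~\ref{thm: integrated form}, applied to $G_{n}$, guarantees that $\pi_{T^{(n)}}\colon C_{c}(G_{n})\to B(L^{p}(\mu^{(n)},\mathcal{Z}^{(n)}))$ is contractive for the $I$-norm on $C_{c}(G_{n})$. Since equivalence is implemented by a surjective linear isometry, and conjugation by an isometry preserves operator norms, $\|\pi^{(n)}(F)\|=\|\pi_{T^{(n)}}(F)\|\le\|F\|_{I}$ for every $F\in C_{c}(G_{n})=M_{n}(C_{c}(G))$. Hence $\pi$ is $p$-completely contractive, and so is the original homomorphism.

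For $F^{p}(G)$ I would bootstrap. Let $\varphi\colon F^{p}(G)\to B(L^{p}(\lambda))$ be contractive and let $\iota\colon C_{c}(G)\to F^{p}(G)$ be the canonical map; since every element of $\mathcal{R}^{p}$ is in particular $I$-norm contractive, one has $\|\iota(f)\|_{F^{p}(G)}\le\|f\|_{I}$, so $\pi:=\varphi\circ\iota$ is an $I$-norm contractive representation of $C_{c}(G)$, hence $p$-completely contractive by the previous paragraph. As $\pi$ is nondegenerate and $p$-completely contractive, Proposition~\ref{Proposition: correspondence Cc and Fp} supplies a unique $p$-completely contractive representation $\widetilde{\pi}$ of $F^{p}(G)$ restricting to $\pi$. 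Both $\widetilde{\pi}$ and $\varphi$ are continuous homomorphisms on $F^{p}(G)$ agreeing on the dense subalgebra $\iota(C_{c}(G))$, so $\varphi=\widetilde{\pi}$ is $p$-completely contractive.

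The conceptual heart---identifying $\pi_{T}^{(n)}$ with the integrated form of the $G_{n}$-representation $T^{(n)}$---is already established in Subsection~\ref{Subsection: Amplification of representations}, so the bulk of what remains is bookkeeping: transporting the matricial $I$-norm across $M_{n}(C_{c}(G))\cong C_{c}(G_{n})$ and verifying that the relevant equivalences preserve operator norms. The step I expect to demand the most care is degeneracy: the correspondence of Theorem~\ref{Theorem: correspondence representations} and the construction $\pi\mapsto\rho_{\pi}$ are phrased for \emph{nondegenerate} representations, so to cover a fully general homomorphism I would first pass to the essential $L^{p}$-subspace $\overline{\pi(C_{c}(G))L^{p}(\lambda)}$---the range of the hermitian idempotent $\pi(\chi_{G^{0}})$, on which the contractive approximate identity of $C_{c}(G)$ acts as an approximate unit---argue there, and note that $p$-complete contractivity on this complemented $L^{p}$-subspace yields it on all of $L^{p}(\lambda)$.
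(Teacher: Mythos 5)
Your proof is correct and follows essentially the same route as the paper: reduce to a contractive nondegenerate representation of $C_{c}(G)$ on an $L^{p}$-space, invoke part (3) of Theorem~\ref{Theorem: correspondence representations} to realize it as an integrated form $\pi_{T}$, and then use the identification of $\pi_{T}^{(n)}$ with the integrated form $\pi_{T^{(n)}}$ over the amplified groupoid $G_{n}$ from Subsection~\ref{Subsection: Amplification of representations} to get contractivity of every amplification. Your additional care with the nondegeneracy reduction and the bootstrap to $F^{p}(G)$ only makes explicit what the paper's one-line proof leaves implicit.
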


\begin{proof}
It is enough to show that any contractive nondegenerate representation of $%
C_{c}(G)$ on an $L^{p}$-space is $p$-completely contractive. This follows
from part (3) of Theorem~\ref{Theorem: correspondence representations},
together with the fact that the integrated form of a representation of $G$
on an $L^{p}$-bundle is $p$-completely contractive, as observed in
Subsection \ref{Subsection: Amplification of representations}.
\end{proof}

\begin{corollary}
\label{cor: tight inv smgp and gpid alg} Adopt the assumptions of Theorem~%
\ref{Theorem: correspondence representations}, and suppose moreover that $G$
is ample. Then the $L^{p}$-operator algebras $F^{p}(G)$ and $F_{\mathrm{tight%
}}^{p}(\Sigma )$ are $p$-completely isometrically isomorphic. In particular, 
$F^{p}(G)$ is generated by its spatial partial isometries.
\end{corollary}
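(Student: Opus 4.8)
The plan is to realize the isomorphism at the level of dense $\ast$-subalgebras and then transport it to the completions. Throughout I take $\Sigma=\Sigma_{\mathcal K}(G)$ to be the inverse semigroup of compact open slices, which is a basis for the topology precisely because $G$ is ample. The first step is to define a $\ast$-homomorphism $j\colon\mathbb C\Sigma\to C_c(G)$ by $j(\delta_A)=\chi_A$. This is multiplicative and $\ast$-preserving because, for compact open slices $A,B$, there is at most one factorization $\rho_0\rho_1=\gamma$ with $\rho_0\in A$ and $\rho_1\in B$, so $\chi_A\ast\chi_B=\chi_{AB}$, and $\chi_A^{\ast}=\chi_{A^{-1}}$, while $AB$ and $A^{-1}$ are again compact open slices. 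Since every representation in the defining family of $F^{p}(G)$ is $I$-norm contractive (and contractive equals $p$-completely contractive here, by Corollary~\ref{Corollary: automatically completely}), one has $\|\cdot\|_{F^{p}(G)}\le\|\cdot\|_{I}$, which I will use for the density argument below.

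The central step is to match the universal norms through the representation correspondence. For a contractive nondegenerate representation $\pi$ of $C_c(G)$ one has $\pi\circ j=\pi_{\rho_\pi}$, since $\pi(j(\delta_A))=\pi(\chi_A)=\rho_\pi(A)=\pi_{\rho_\pi}(\delta_A)$. By Theorem~\ref{Theorem: correspondence representations}(2) the assignment $\pi\mapsto\rho_\pi$ is a bijection onto the tight regular representations of $\Sigma$, and by the observation following that theorem every tight representation of $\Sigma=\Sigma_{\mathcal K}(G)$ is automatically regular when $G$ is ample; hence $\pi\mapsto\rho_\pi$ ranges over \emph{all} tight representations. Therefore, for $\xi\in\mathbb C\Sigma$,
\[
\|j(\xi)\|_{F^{p}(G)}=\sup_{\pi}\|\pi(j(\xi))\|=\sup_{\pi}\|\pi_{\rho_\pi}(\xi)\|=\sup_{\rho}\|\pi_{\rho}(\xi)\|=\|\xi\|_{F_{\mathrm{tight}}^{p}(\Sigma)},
\]
so $j$ descends to an isometric homomorphism $\bar j\colon F_{\mathrm{tight}}^{p}(\Sigma)\to F^{p}(G)$. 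To upgrade this to a $p$-complete isometry I repeat the computation with the amplifications $j^{(n)}\colon M_n(\mathbb C\Sigma)\to M_n(C_c(G))$, using functoriality of amplification in the form $\pi^{(n)}\circ j^{(n)}=(\pi\circ j)^{(n)}=\pi_{\rho_\pi}^{(n)}$ together with the fact that the matrix norms on both enveloping algebras are given by $\sup_\pi\|\pi^{(n)}(\cdot)\|$ and $\sup_\rho\|\pi_\rho^{(n)}(\cdot)\|$, respectively.

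For surjectivity I show that $j$ has dense range, and this is the step where ampleness is used essentially. On a slice $U$ the $I$-norm of a function in $C_c(U)$ coincides with its supremum norm, and since $G^0$ and hence each compact open slice is totally disconnected, the locally constant compactly supported functions are supremum-dense and are finite linear combinations of characteristic functions of compact open subslices. Thus $\mathrm{span}\{\chi_A\colon A\in\Sigma\}$ is $I$-dense, hence $\|\cdot\|_{F^{p}(G)}$-dense, in $C_c(G)$, which is itself dense in $F^{p}(G)$. Being isometric with dense range, $\bar j$ is a surjective $p$-completely isometric isomorphism, proving $F^{p}(G)\cong F_{\mathrm{tight}}^{p}(\Sigma)$. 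The main obstacle I anticipate is precisely this bookkeeping: verifying that the density genuinely invokes the ample hypothesis through the reduction to locally constant functions and the coincidence of the $I$-norm with the supremum norm on slices, and that the norm identities pass cleanly to every amplification and then to the completions; the conceptual heavy lifting is already done in Theorem~\ref{Theorem: correspondence representations}.

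Finally, for the ``in particular'' assertion, choose a faithful isometric nondegenerate representation $\Pi\colon F^{p}(G)\to B(L^{p}(\lambda))$ (for instance an $\ell^{p}$-direct sum of a countable norming subfamily of the defining representations, available by separability). Writing $v_A\in F^{p}(G)$ for the image of $\chi_A$ and setting $\pi=\Pi|_{C_c(G)}$, we have $\Pi(v_A)=\pi(\chi_A)=\rho_\pi(A)\in\mathcal S(L^{p}(\lambda))$, a spatial partial isometry; for a direct sum this uses that an $\ell^{p}$-direct sum of spatial partial isometries, with the direct sum of the reverses, is again spatial. Since $\Pi$ is isometric and the elements $v_A$ span a dense subalgebra of $F^{p}(G)$, it follows that $F^{p}(G)$ is generated by spatial partial isometries.
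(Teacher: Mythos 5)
Your proposal is correct and follows the same route as the paper, whose proof is simply the one-line observation that the claim follows from part (2) of Theorem~\ref{Theorem: correspondence representations}; you have filled in exactly the details that reference leaves implicit (the norm identity via the bijection $\pi\leftrightarrow\rho_\pi$, automatic regularity of tight representations of $\Sigma_{\mathcal K}(G)$ in the ample case, the $I$-norm density of $\mathrm{span}\{\chi_A\}$, and the passage to amplifications). The argument for the ``in particular'' clause via a faithful direct-sum representation is likewise the intended one.
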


\begin{proof}
Observe that when $G$ is ample, and $\Sigma $ is the inverse semigroup of
compact open slices, any tight representation of $\Sigma $ on an $L^{p}$%
-space is automatically regular. Thus the statement follows from part (2) of
Theorem~\ref{Theorem: correspondence representations}.
\end{proof}

\subsection{Reduced $L^{p}$-operator algebras of étale groupoids\label%
{Subsection: reduced}}

Let $\mu $ be a (not necessarily quasi-invariant) Borel probability measure
on $G^{0}$, and let $\nu $ be the measure on $G$ associated with $\mu $ as
in Subsection \ref{Subsection: background on groupoids}. Denote by $\mathrm{%
Ind}(\mu )\colon C_{c}(G)\rightarrow B(L^{p}(\nu ^{-1}))$ the left action by
convolution. Then $\mathrm{Ind}(\mu )$ is contractive and nondegenerate.

\begin{remark}
When $\mu $ is quasi-invariant, the representation $\mathrm{Ind}(\mu )$ is
the integrated form of the left regular representation $T^{\mu }$ of $G$ on $%
\bigsqcup\nolimits_{x\in G^{0}}\ell ^{p}(xG)$ as defined in Subsection \ref%
{Subsection: representations groupoids Lp-bundles}. When $G$ is Hausdorff,
the same argument as in Lemma~\ref{Lemma: faithful measure} shows that a
function $f$ in $C_{c}(G)$ belongs to $\mathrm{Ker}(\mathrm{Ind}(\mu ))$ if
and only if $f$ vanishes on the support of $\nu $.
\end{remark}

\begin{definition}
Define $\mathcal{R}_{\mathrm{red}}^{p}$ red to be the collection of
representations $\mathrm{\mathrm{Ind}}(\mu )$ where $\mu $ varies among the
Borel probability measures on $G^{0}$. The \emph{reduced }$L^{p}$-\emph{%
operator algebra } $F_{\mathrm{red}}^{p}(G)$ of $G$ is the enveloping $L^{p}$%
-operator algebra $F^{\mathcal{R}_{\mathrm{red}}^{p}}(C_{c}(G))$. The norm
on $F_{\mathrm{red}}^{p}(G)$ is denoted by $\| \cdot \| _{\mathrm{red}} $.
\end{definition}

The identity map on $C_{c}(G)$ induces a canonical $p$-completely
contractive homomorphism $F^{p}(G)\rightarrow F_{\mathrm{red}}^{p}(G)$ with
dense range. By Proposition \ref{Proposition: separates points}, when $G$ is
Hausdorff the family $\mathcal{R}_{\mathrm{red}}^{p}$ separates points, and
hence the canonical map $C_{c}(G)\rightarrow F_{\mathrm{red}}^{p}(G)$ is
injective.

\begin{remark}
The dual of $\mathrm{Ind}(\mu )\colon C_{c}(G)\rightarrow B(L^{p}(\nu
^{-1})) $ is the representation $\mathrm{Ind}(\mu )\colon
C_{c}(G)\rightarrow B(L^{p^{\prime }}(\nu ))$, and thus the involution on $%
C_{c}(G)$ extends to a $p$-completely isometric anti-isomorphism $F_{\mathrm{%
red}}^{p}(G)\rightarrow F_{\mathrm{red}}^{p^{\prime }}(G)$.
\end{remark}

For $x\in G^{0}$, we write $\delta _{x}$ for its associated point mass
measure, and write $\mathrm{Ind}(x)$ in place of $\mathrm{Ind}(\delta _{x})$%
. In this case, $\nu $ is the counting measure $c_{xG}$ on $xG$, and $\nu
^{-1}$ is the counting measure $c_{Gx}$ on $Gx$. Moreover, $\mathrm{Ind}(x)$
is given by 
\begin{equation*}
(\mathrm{Ind}(x)f(\xi ))(\rho )=\sum\limits_{\gamma \in r(\rho )G}f(\gamma
)\xi (\gamma ^{-1}\rho )
\end{equation*}%
for $f\in C_{c}(G)$, $\xi \in L^{p}(\nu ^{-1})$, and $\rho \in Gx$. The same
argument in the proof of \cite[Proposition 3.1.2]{paterson_groupoids_1999}
gives the following.

\begin{proposition}
\label{Proposition: induced representations} Let $\mu $ be a probability
measure on $G^{0}$. If $f\in C_{c}(G)$, then 
\begin{equation*}
\| \mathrm{Ind}(\mu )f\| =\sup_{x\in \mathrm{supp}(\mu )}\| \mathrm{Ind}%
(x)(f)\| \text{.}
\end{equation*}
\end{proposition}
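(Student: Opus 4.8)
The plan is to prove the identity by showing two inequalities. For the inequality $\|\mathrm{Ind}(\mu)f\| \leq \sup_{x\in\mathrm{supp}(\mu)}\|\mathrm{Ind}(x)(f)\|$, the key observation is that the representation $\mathrm{Ind}(\mu)$ on $L^p(\nu^{-1})$ decomposes as a ``direct integral'' of the fiber representations $\mathrm{Ind}(x)$ on $\ell^p(Gx)$ over the measure $\mu$. Concretely, the measure $\nu^{-1}$ is obtained by integrating the counting measures $(c_{Gx})_{x\in G^0}$ against $\mu$, so that $L^p(\nu^{-1})$ can be identified isometrically with $L^p(\mu, \mathcal{W})$, where $\mathcal{W}$ is the Borel Banach bundle with fiber $\mathcal{W}_x = \ell^p(Gx)$ over $x$. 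Under this identification, $\mathrm{Ind}(\mu)(f)$ acts fiberwise as the \emph{decomposable} operator whose disintegration at $x$ is precisely $\mathrm{Ind}(x)(f)$. I would then invoke Proposition~\ref{Proposition: decomposable operator}: since each fiber operator has norm $\|\mathrm{Ind}(x)(f)\|$, and these are bounded above by $M := \sup_{x\in\mathrm{supp}(\mu)}\|\mathrm{Ind}(x)(f)\|$ for $\mu$-almost every $x$, the norm of the decomposable operator is at most $M$.

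For the reverse inequality $\|\mathrm{Ind}(\mu)f\| \geq \sup_{x\in\mathrm{supp}(\mu)}\|\mathrm{Ind}(x)(f)\|$, the idea is to localize. The explicit formula for the norm of a decomposable operator in Proposition~\ref{Proposition: decomposable operator} shows that $\|\mathrm{Ind}(\mu)(f)\|$ equals the essential supremum over $x$ (with respect to $\mu$) of the fiber norms $\|\mathrm{Ind}(x)(f)\|$. The remaining point is to pass from the $\mu$-essential supremum to the genuine supremum over $\mathrm{supp}(\mu)$. For this I would use the continuity of the assignment $x\mapsto \mathrm{Ind}(x)(f)$: since $f\in C_c(G)$ has compact support meeting only finitely many arrows through any given point, the map $x\mapsto \|\mathrm{Ind}(x)(f)\|$ is lower semicontinuous (indeed continuous on the relevant strata). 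Lower semicontinuity guarantees that the $\mu$-essential supremum over $\mathrm{supp}(\mu)$ coincides with the actual supremum over $\mathrm{supp}(\mu)$, because any point where the fiber norm is large has a neighborhood of positive $\mu$-measure where it remains close to that value.

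The step I expect to be the main obstacle is establishing the semicontinuity of $x\mapsto \|\mathrm{Ind}(x)(f)\|$ and the precise identification of the disintegration of $\mathrm{Ind}(\mu)(f)$ as the section $x\mapsto \mathrm{Ind}(x)(f)$. The identification requires carefully tracking how the convolution action on $L^p(\nu^{-1})$ restricts to each fiber $\ell^p(Gx)$; one must verify that the formula $(\mathrm{Ind}(x)f(\xi))(\rho) = \sum_{\gamma\in r(\rho)G} f(\gamma)\xi(\gamma^{-1}\rho)$ for $\rho\in Gx$ is indeed the fiberwise restriction of the global convolution, and that this assignment is Borel in $x$ so that it genuinely defines a decomposable operator in the sense of the earlier section. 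The semicontinuity is essentially a consequence of the finiteness of the sums involved (étaleness makes $Gx$ and $xG$ countable discrete) together with continuity of $f$, but making the passage from essential to genuine supremum rigorous on $\mathrm{supp}(\mu)$ is where the bulk of the care is needed.
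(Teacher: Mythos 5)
Your proposal is correct, and it organizes the proof differently from the paper, most notably in the lower bound. For the inequality $\Vert \mathrm{Ind}(\mu)(f)\Vert \leq \sup_{x}\Vert\mathrm{Ind}(x)(f)\Vert$ the paper does not pass through the decomposable-operator machinery: it writes $\langle \mathrm{Ind}(\mu)(f)\xi,\eta\rangle=\int_{C}\langle \mathrm{Ind}(x)(f)\xi|_{Gx},\eta|_{Gx}\rangle\, d\mu(x)$ and applies H\"older's inequality fiberwise --- the same direct-integral decomposition you describe, carried out by hand on the pairing. The genuine divergence is in the reverse inequality. You use the ``minimum $M$'' half of Proposition~\ref{Proposition: decomposable operator} to identify $\Vert\mathrm{Ind}(\mu)(f)\Vert$ with the $\mu$-essential supremum of the fiber norms (after checking, via Proposition~\ref{Proposition: characterization decomposable}, that convolution by $f$ commutes with multiplication by $\chi_{s^{-1}(E)}$ and hence is decomposable with disintegration $x\mapsto\mathrm{Ind}(x)(f)$), and then upgrade the essential supremum to the supremum over $\mathrm{supp}(\mu)$ by lower semicontinuity. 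The paper instead fixes $x\in\mathrm{supp}(\mu)$, takes bump functions $f_n$ supported in a neighborhood basis of $x$ with $\int f_n\, d\mu=1$, tests $\mathrm{Ind}(\mu)(f)$ against $\xi_n=(f_n^{1/p}\circ s)\xi$ and $\eta_n=(f_n^{1/p'}\circ s)\eta$, and passes to the limit. Both routes rest on the same analytic input, which you correctly single out as the main point: continuity in $y$ of the fiberwise pairings $y\mapsto\langle\mathrm{Ind}(y)(f)\xi,\eta\rangle$ for suitable test vectors. This is where \'etaleness is genuinely used --- each arrow of $Gx_0$ in the (compact) support of $f$ lies in an open slice whose source map is an open embedding, and these slices transport finitely supported test vectors from $Gx_0$ to nearby fibers $Gx$; without open slices the fibers could degenerate and the claim would fail. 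Note that only \emph{lower} semicontinuity of $x\mapsto\Vert\mathrm{Ind}(x)(f)\Vert$ is available in general (nearby fibers can be larger than $Gx_0$), and fortunately that is exactly what your essential-supremum argument needs. Your packaging buys a cleaner conceptual statement at the cost of the disintegration bookkeeping; the paper's bump-function argument is more elementary and self-contained but hides the same continuity verification.
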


\begin{corollary}
The algebra $F_{\mathrm{red}}^{p}(G)$ of $G$ is $p$-completely isometrically
isomorphic to the enveloping $L^{p}$-operator algebra $F^{\mathcal{R}%
}(C_{c}(G))$ with respect to the family of representations $\mathcal{R}=\{%
\mathrm{Ind}(x)\colon x\in G^{0}\}$.
\end{corollary}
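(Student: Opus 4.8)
The plan is to show that the identity map on $C_{c}(G)$ extends to a $p$-completely isometric isomorphism between the two enveloping algebras, by comparing their matrix norms level by level. First I would observe that every point mass $\delta _{x}$ is a Borel probability measure on $G^{0}$ and that $\mathrm{Ind}(\delta _{x})=\mathrm{Ind}(x)$, so that $\mathcal{R}=\{\mathrm{Ind}(x)\colon x\in G^{0}\}$ is contained in $\mathcal{R}_{\mathrm{red}}^{p}=\{\mathrm{Ind}(\mu )\}$. Consequently $I_{\mathcal{R}_{\mathrm{red}}^{p}}\subseteq I_{\mathcal{R}}$, and at every matrix level $n$ and for every $F\in M_{n}(C_{c}(G))$ we have $\sup_{x}\Vert \mathrm{Ind}(x)^{(n)}(F)\Vert \leq \sup_{\mu }\Vert \mathrm{Ind}(\mu )^{(n)}(F)\Vert$; this yields a canonical $p$-completely contractive surjection $F_{\mathrm{red}}^{p}(G)\to F^{\mathcal{R}}(C_{c}(G))$. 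It then remains to prove the reverse inequality $\Vert \mathrm{Ind}(\mu )^{(n)}(F)\Vert \leq \sup_{x\in G^{0}}\Vert \mathrm{Ind}(x)^{(n)}(F)\Vert$ for every probability measure $\mu $, every $n\geq 1$, and every $F$.

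The main idea for this reverse inequality is to exhibit $\mathrm{Ind}(\mu )^{(n)}(F)$ as a decomposable operator over $(G^{0},\mu )$ whose fibers are exactly the operators $\mathrm{Ind}(x)^{(n)}(F)$. Concretely, I would use the disintegration $\nu ^{-1}=\int c_{Gx}\,d\mu (x)$ with fiber map $s\colon G\to G^{0}$ to identify $L^{p}(\nu ^{-1})$ with $L^{p}(\mu ,\mathcal{W})$, where $\mathcal{W}_{x}=\ell ^{p}(Gx)$, and hence $\ell ^{p}(n,L^{p}(\nu ^{-1}))$ with $L^{p}(\mu ,\ell ^{p}(n,\mathcal{W}))$. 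The point is that convolution by $f\in C_{c}(G)$ commutes with multiplication by any $g\in B(G^{0})$ pulled back along $s$: since $s(\gamma ^{-1}\rho )=s(\rho )$ for composable $\gamma ,\rho $, one checks that $\mathrm{Ind}(\mu )(f)\Delta _{g\circ s}=\Delta _{g\circ s}\mathrm{Ind}(\mu )(f)$, and the same holds for the amplification. By Proposition~\ref{Proposition: characterization decomposable} this forces $\mathrm{Ind}(\mu )^{(n)}(F)$ to be decomposable with respect to $\mathrm{id}_{G^{0}}$, and a direct inspection of the defining formula shows that its disintegration is the section $x\mapsto \mathrm{Ind}(x)^{(n)}(F)$ acting on $\ell ^{p}(n,\ell ^{p}(Gx))$.

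With this in hand, I would apply Proposition~\ref{Proposition: decomposable operator} with $\phi =\mathrm{id}_{G^{0}}$ and source and target measure both equal to $\mu $, so that the Radon--Nikodym factor $\frac{d\phi _{\ast }(\mu )}{d\nu }$ is identically $1$; this identifies $\Vert \mathrm{Ind}(\mu )^{(n)}(F)\Vert$ with the $\mu $-essential supremum of $x\mapsto \Vert \mathrm{Ind}(x)^{(n)}(F)\Vert$, which is at most $\sup_{x\in G^{0}}\Vert \mathrm{Ind}(x)^{(n)}(F)\Vert$. Combined with the first paragraph, the matrix norms agree at every level and the canonical map is a $p$-complete isometry. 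For $n=1$ this reverse inequality is already contained in Proposition~\ref{Proposition: induced representations}, which identifies $\Vert \mathrm{Ind}(\mu )f\Vert$ with $\sup_{x\in \mathrm{supp}(\mu )}\Vert \mathrm{Ind}(x)(f)\Vert$; the decomposability argument is the natural matrix-level upgrade, and one could alternatively deduce the general case by applying Proposition~\ref{Proposition: induced representations} to the amplified groupoid $G_{n}$ after identifying $\mathrm{Ind}(\mu )^{(n)}$ with an induced representation of $G_{n}$ and $\mathrm{Ind}(x)^{(n)}$ with $\mathrm{Ind}((x,j))$. The step I expect to be most delicate is the verification that $x\mapsto \mathrm{Ind}(x)^{(n)}(F)$ is a genuine Borel section of the relevant operator bundle, and that the identifications of the amplified $L^{p}$-spaces are compatible with the bundle structures, so that Proposition~\ref{Proposition: decomposable operator} applies verbatim.
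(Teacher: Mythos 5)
Your argument is correct, but for the key inequality $\Vert \mathrm{Ind}(\mu )^{(n)}(F)\Vert \leq \sup_{x}\Vert \mathrm{Ind}(x)^{(n)}(F)\Vert$ you take a genuinely different route from the paper. The paper obtains the corollary directly from Proposition~\ref{Proposition: induced representations}, whose relevant half is proved by a bare-hands H\"older estimate: one writes $\langle \mathrm{Ind}(\mu )(f)\xi ,\eta \rangle =\int \langle \mathrm{Ind}(x)(f)\xi |_{Gx},\eta |_{Gx}\rangle \,d\mu (x)$ and estimates the integrand pointwise; matrix levels are then handled by passing to the amplified groupoid $G_{n}$, which is precisely the second alternative you mention at the end. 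Your primary route---commuting $\mathrm{Ind}(\mu )^{(n)}(F)$ past the diagonal multiplication operators $\Delta _{g\circ s}$ and invoking Propositions~\ref{Proposition: characterization decomposable} and~\ref{Proposition: decomposable operator} with $\phi =\mathrm{id}_{G^{0}}$---is structurally cleaner, treats all matrix levels uniformly, and in fact dispenses with the measurability worry you raise, since Proposition~\ref{Proposition: characterization decomposable} constructs the Borel disintegration for you and essential uniqueness identifies it $\mu$-almost everywhere with $x\mapsto \mathrm{Ind}(x)^{(n)}(F)$. What the soft argument gives up is the finer conclusion of Proposition~\ref{Proposition: induced representations}, namely that $\Vert \mathrm{Ind}(\mu )(f)\Vert$ equals the supremum over $\mathrm{supp}(\mu )$ rather than merely the $\mu$-essential supremum; the paper needs its bump-function approximation for that lower bound, which you correctly sidestep by deducing the opposite inequality from the trivial containment $\mathcal{R}\subseteq \mathcal{R}_{\mathrm{red}}^{p}$. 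Two cosmetic caveats: the fiber maps $\mathrm{Ind}(x)(f)$ are bounded rather than contractive, so one should rescale by $\Vert f\Vert _{I}$ to fit the paper's definition of $B(\mathcal{Z},\mathcal{W},\phi )$; and one should note that $\bigsqcup_{x\in G^{0}}\ell ^{p}(n\times Gx)$ carries the Borel Banach bundle structure those propositions require, which it does since each $Gx$ is countable and this is exactly the bundle underlying the (amplified) left regular representation.
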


\subsection{Amenable groupoids and their $L^{p}$-operator algebras}

There are several equivalent characterizations of amenability for étale
groupoids. By \cite[Theorem~2.2.13]{anantharaman-delaroche_amenable_2000},
an étale groupoid is amenable if and only if has an approximate invariant
mean; see \cite[Definition 4.1.1]{renault_c*-algebras_2009}

\begin{lemma}
\label{Lemma: amenable amplification} If $G$ is amenable and $m\geq 1$, then
its amplification $G_{m}$ is amenable.
\end{lemma}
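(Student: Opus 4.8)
The plan is to use the characterization of amenability recalled just above the statement: by \cite[Theorem~2.2.13]{anantharaman-delaroche_amenable_2000}, an \'etale groupoid is amenable if and only if it admits an approximate invariant mean. Since $G$ is amenable, I would fix an approximate invariant mean $(f_{n})_{n\in\omega}$ on $G$ and manufacture from it an approximate invariant mean $(F_{n})_{n\in\omega}$ on the amplification $G_{m}=m\times G\times m$, checking conditions (1), (2), and (3) of the definition. The natural candidate, which I would take, is
\[
F_{n}(i,\gamma,j)=\frac{1}{m}\,f_{n}(\gamma)\qquad\text{for }(i,\gamma,j)\in G_{m}.
\]
Because $m$ is finite and discrete and each $f_{n}$ is positive, continuous, and compactly supported on $G$, every $F_{n}$ is positive, continuous, and compactly supported on $G_{m}$, with support contained in $m\times\operatorname{supp}(f_{n})\times m$. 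The normalizing factor $\tfrac1m$ is forced by condition (1), as the computation below shows.

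The main point is that, for a fixed object $(y,a)\in G^{0}\times m$, the arrows of $G_{m}$ with range $(y,a)$ are exactly those of the form $(a,\gamma,j)$ with $\gamma\in yG$ and $j\in m$, since $r(i,\gamma,j)=(r(\gamma),i)$. I would verify condition (1) (and simultaneously (2)) from
\[
\sum_{(a,\gamma,j)\in (y,a)G_{m}}F_{n}(a,\gamma,j)=\sum_{\gamma\in yG}\sum_{j\in m}\frac{1}{m}f_{n}(\gamma)=\sum_{\gamma\in yG}f_{n}(\gamma)\le 1 .
\]
For (2), observe that this quantity depends only on $y$, and that it tends to $1$ uniformly on compact subsets of $G^{0}\times m$ because such sets project onto compact subsets of $G^{0}$, on which $x\mapsto\sum_{\gamma\in xG}f_{n}(\gamma)\to 1$ uniformly by condition (2) for $(f_{n})$.

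For condition (3), I would carry out the bookkeeping with composition and inversion in $G_{m}$. For $(i,\gamma,j)\in G_{m}$ the arrows $\rho$ with $r(\rho)=r(i,\gamma,j)=(r(\gamma),i)$ are precisely $\rho=(i,\theta,k)$ with $\theta\in r(\gamma)G$ and $k\in m$; one computes $\rho^{-1}=(k,\theta^{-1},i)$ and hence $\rho^{-1}(i,\gamma,j)=(k,\theta^{-1}\gamma,j)$. Substituting into the definition of $F_{n}$ gives
\[
\sum_{\rho\in (r(\gamma),i)G_{m}}\bigl|F_{n}(\rho^{-1}(i,\gamma,j))-F_{n}(\rho)\bigr|=\sum_{k\in m}\sum_{\theta\in r(\gamma)G}\frac{1}{m}\bigl|f_{n}(\theta^{-1}\gamma)-f_{n}(\theta)\bigr|=\sum_{\theta\in r(\gamma)G}\bigl|f_{n}(\theta^{-1}\gamma)-f_{n}(\theta)\bigr| ,
\]
using $\sum_{k\in m}\tfrac1m=1$. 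The right-hand side depends only on $\gamma$ and tends to $0$ uniformly on compact subsets of $G$ by condition (3) for $(f_{n})$; since compact subsets of $G_{m}=m\times G\times m$ project onto compact subsets of $G$, condition (3) for $(F_{n})$ follows. I do not expect a genuine obstacle here: the argument is essentially a transport of the approximate invariant mean through the finite amplification, and the only things to get exactly right are the normalization $\tfrac1m$ dictated by (1) and the cancellation $\sum_{k\in m}\tfrac1m=1$ that makes (3) collapse precisely to the corresponding condition on $G$.
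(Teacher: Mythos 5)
Your proposal is correct and is exactly the paper's argument: the paper defines $f_{n}^{(m)}(i,\gamma,j)=\frac{1}{m}f_{n}(\gamma)$ and omits the verification, which you have carried out correctly (the identification of $(y,a)G_{m}$, the normalization by $\tfrac1m$, and the cancellation $\sum_{k\in m}\tfrac1m=1$ in condition (3) are all as intended).
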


\begin{proof}
Let $( f_{n}) _{n\in \omega }$ be an approximate invariant mean for $G$. For 
$n\in\omega$, define $f^{(m)}_n\colon C_c(G_m)\to\mathbb{C}$ by $f_{n}^{(m)
}( i,\gamma ,j) =\frac{1}{m}f_{n}(\gamma)$ for $(i,\gamma,j)\in G_m$. It is
not difficult to verify that $( f_{n}^{(m) }) _{n\in \omega }$ is an
approximate invariant mean for $G_m$. We omit the details.
\end{proof}

\begin{definition}
A pair of sequences $(g_n) _{n\in \omega }$ and $( h_{n})_{n\in \omega }$ of
positive functions in $C_c(G) $ is said to be an \emph{approximate invariant 
}$p$\emph{-mean} for $G$, if they satisfy the following:

\begin{enumerate}
\item $\sum\nolimits_{\gamma \in xG}g_{n}(\gamma )^{p}\leq 1$ and $%
\sum\nolimits_{\gamma \in xG}h_{n}(\gamma )^{p^{\prime }}\leq 1$ for every $%
n\in \omega $ and every $x\in G^{0}$,

\item the sequence of functions $x\mapsto \sum\nolimits_{\rho \in
xG}g_{n}(\rho )h_{n}(\rho )$ converges to $1$ uniformly on compact subsets
of $G^{0}$, and

\item the sequences of functions 
\begin{equation*}
\gamma \mapsto \sum\limits_{\rho \in r(\gamma )G}|g_{n}(\gamma ^{-1}\rho
)-g_{n}(\rho )|^{p}\quad \text{and}\quad \gamma \mapsto \ \sum\limits_{\rho
\in r(\gamma )G}|h_{n}(\gamma ^{-1}\rho )-h_{n}(\rho )|^{p^{\prime }}
\end{equation*}%
converge to $0$ uniformly on compact subsets of $G$.
\end{enumerate}
\end{definition}

It is not difficult to see that any amenable groupoid has an approximate
invariant $p$-mean. Indeed, if $( f_{n}) _{n\in \omega }$ is any approximate
invariant mean on $G$, then the sequences $( f_{n}^{1/p}) _{n\in \omega }$
and $( f_{n}^{1/p^{\prime }}) _{n\in\omega }$ define an approximate
invariant $p$-mean on $G$.

\begin{remark}
It is easy to check that if $( g_{n},h_{n}) _{n\in \omega }$ is an
approximately invariant $p$-mean on $G$, then $( h_{n}\ast g_{n}) _{n\in
\omega }$ converges to $1$ uniformly on compact subsets of $G$.
\end{remark}

The following theorem asserts that full and reduced $L^{p}$-operator
algebras of amenable étale groupoids are canonically isometrically
isomorphic. The proof is the straightforward generalization of \cite[Theorem
4.2.1]{renault_c*-algebras_2009} where $2$ is replaced by $p$, and
approximate invariant means are replaced by approximate invariant $p$-means.

\begin{theorem}
\label{thm: amenable groupoid} Suppose that $G$ is amenable. Then the
canonical homomorphism $F^{p}(G)\rightarrow F_{\mathrm{red}}^{p}(G)$ is a $p$%
-completely isometric isomorphism.
\end{theorem}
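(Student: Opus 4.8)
The plan is to show that the canonical contractive map $F^{p}(G)\to F^{p}_{\mathrm{red}}(G)$ is isometric on $C_{c}(G)$ and then to promote this to a $p$-complete isometry. For the promotion I would first reduce to the merely isometric statement for arbitrary amenable \'etale groupoids: by the amplification bookkeeping of Subsection~\ref{Subsection: Amplification of representations}, the canonical matrix norms satisfy $M_{n}(F^{p}(G))\cong F^{p}(G_{n})$ and $M_{n}(F^{p}_{\mathrm{red}}(G))\cong F^{p}_{\mathrm{red}}(G_{n})$ $p$-completely isometrically under $M_{n}(C_{c}(G))\cong C_{c}(G_{n})$, and $G_{n}$ is again amenable by Lemma~\ref{Lemma: amenable amplification}. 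Hence it suffices to prove that for every amenable $G$ and every $f\in C_{c}(G)$ one has $\|f\|_{F^{p}(G)}\leq\|f\|_{\mathrm{red}}$, the reverse inequality being automatic since the reduced family is contained in all representations.

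By Theorem~\ref{Theorem: correspondence representations}, every contractive nondegenerate representation of $C_{c}(G)$ is the integrated form $\pi_{T}$ of a representation $(\mu,T)$ of $G$ on an $L^{p}$-bundle $\mathcal{Z}$ with $\mu$ quasi-invariant, so I must bound $\|\pi_{T}(f)\|$ by $\|f\|_{\mathrm{red}}$. The conceptual core is a Fell-absorption identity. I would form the Borel Banach bundle $\mathcal{Y}$ over $G^{0}$ with fibers $\mathcal{Y}_{x}=\ell^{p}(xG,\mathcal{Z}_{s(\cdot)})$ and two representations on it: the genuine tensor $(T\otimes T^{\mu})_{\gamma}\zeta(\rho)=T_{\gamma}\,\zeta(\gamma^{-1}\rho)$ and the shift-only representation $R_{\gamma}\zeta(\rho)=\zeta(\gamma^{-1}\rho)$, where $T^{\mu}$ is the left regular representation of Example~\ref{Example: left regular representation}. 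The fiberwise maps $W_{x}\zeta(\rho)=T_{\rho}\,\zeta(\rho)$ for $\rho\in xG$ are surjective isometries intertwining $R$ with $T\otimes T^{\mu}$, using the homomorphism identity $T_{\gamma}T_{\gamma^{-1}\rho}=T_{\rho}$, so the two representations are equivalent. Since $R$ only permutes the $xG$-index and leaves the $\mathcal{Z}$-coefficients passive, its integrated form is an inflation of $\mathrm{Ind}(\mu)$, whence $\|\pi_{T\otimes T^{\mu}}(f)\|=\|\pi_{R}(f)\|\leq\|\mathrm{Ind}(\mu)(f)\|\leq\|f\|_{\mathrm{red}}$.

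It then remains to show that $\pi_{T}$ is weakly contained in $\pi_{T\otimes T^{\mu}}$, and here amenability enters through an approximate invariant $p$-mean $(g_{n},h_{n})$. Given unit vectors $\xi\in L^{p}(\mu,\mathcal{Z})$ and $\eta\in L^{p'}(\mu,\mathcal{Z}')$, I would set $(\Xi_{n})_{x}(\rho)=g_{n}(\rho)\xi_{x}$ and $(\Theta_{n})_{x}(\rho)=h_{n}(\rho)\eta_{x}$ for $\rho\in xG$; condition~(1) of the $p$-mean gives $\|\Xi_{n}\|,\|\Theta_{n}\|\leq 1$. A direct computation with the integrated-form formula shows that $\langle\pi_{T\otimes T^{\mu}}(f)\Xi_{n},\Theta_{n}\rangle$ equals the expression for $\langle\pi_{T}(f)\xi,\eta\rangle$ with the integrand multiplied by the factor $\sum_{\rho\in r(\gamma)G}g_{n}(\gamma^{-1}\rho)h_{n}(\rho)$, which after the substitution $\sigma=\gamma^{-1}\rho$ becomes $\sum_{\sigma\in s(\gamma)G}g_{n}(\sigma)h_{n}(\gamma\sigma)$. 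Splitting this as $\sum_{\sigma}g_{n}(\sigma)h_{n}(\sigma)+\sum_{\sigma}g_{n}(\sigma)\bigl(h_{n}(\gamma\sigma)-h_{n}(\sigma)\bigr)$ and invoking conditions~(2) and~(3) of the $p$-mean (the second term estimated by H\"older) shows the factor converges to $1$ uniformly for $\gamma$ in the compact set $\mathrm{supp}(f)$. Hence $\langle\pi_{T\otimes T^{\mu}}(f)\Xi_{n},\Theta_{n}\rangle\to\langle\pi_{T}(f)\xi,\eta\rangle$, and since the left side is bounded by $\|f\|_{\mathrm{red}}$ for every $n$, I obtain $|\langle\pi_{T}(f)\xi,\eta\rangle|\leq\|f\|_{\mathrm{red}}$; taking the supremum over unit $\xi,\eta$ yields $\|\pi_{T}(f)\|\leq\|f\|_{\mathrm{red}}$. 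The main obstacle I anticipate is the bookkeeping for the bundle $\mathcal{Y}$ and a rigorous verification that $R$ is an honest inflation of $\mathrm{Ind}(\mu)$ (the $L^{p}$-analog of $\lambda\otimes 1$), so that its integrated-form norm is genuinely controlled by $\|f\|_{\mathrm{red}}$; the F\o lner limit itself is a routine estimate once conditions~(2) and~(3) are applied on the compact support of $f$.
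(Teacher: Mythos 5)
Your proposal is correct and follows essentially the same route as the paper: reduction to the isometric statement via amplifications and Lemma~\ref{Lemma: amenable amplification}, comparison with the direct integral $\int (\mathrm{Ind}(x)\otimes 1)\, d\mu(x)$ (whose norm is controlled by $\Vert \cdot \Vert_{\mathrm{red}}$), and convergence of matrix coefficients against vectors built from an approximate invariant $p$-mean. The only organizational difference is that the paper folds your explicit Fell-absorption step into the definition of the twisted vectors $\widehat{\xi}_{n,x}(\gamma)=D^{\frac{1}{p}}(\gamma)g_{n}(\gamma)T_{\gamma^{-1}}\xi_{r(\gamma)}$, thereby avoiding the bundle bookkeeping for $\mathcal{Y}$ and the separate verification that $\pi_{R}$ is an inflation of $\mathrm{Ind}(\mu)$ --- the one step you rightly flag as delicate.
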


\section{Examples: analogs of Cuntz algebras and AF-algebras}

Throughout this section, we let $p\in (1,\infty)$.

\subsection{The Cuntz $L^p$-operator algebras}

Fix $d\in \omega $ with $d\geq 2$. The following is \cite[Definition 1.1]%
{phillips_analogs_2012} and \cite[Definition 7.4 (2)]{phillips_analogs_2012}%
. Algebra representations of complex unital algebras are always assumed to
be unital.

\begin{definition}
\label{Definition: Leavitt} Define the \emph{Leavitt algebra} $L_{d}$ to be
the universal (complex) algebra with generators $s_{0},\ldots
,s_{d-1},s_{0}^{\ast },\ldots ,s_{d-1}^{\ast }$, subject to the relations $%
\sum\nolimits_{j\in d}s_{j}s_{j}^{\ast }=1$ and $s_{j}^{\ast }s_{k}=\delta
_{j,k}$ for $j,k\in d$.

If $\lambda $ is a $\sigma $-finite Borel measure on a standard Borel space,
a \emph{spatial representation} of $L_{d}$ on $L^{p}(\lambda )$ is an
algebra homomorphism $\rho \colon L_{d}\rightarrow B(L^{p}(\lambda ))$ such
that for $j\in d$, the operators $\rho (s_{j})$ and $\rho (s_{j}^{\ast })$
are mutually inverse spatial partial isometries, i.e.\ $\rho (s_{j}^{\ast
})=\rho (s_{j})^{\ast }$.
\end{definition}

It is a consequence of a fundamental result of Cuntz from \cite%
{cuntz_simple_1977} that any two *-representations of $L_{d}$ on a Hilbert
space induce the same norm on $L_{d}$. The corresponding completion is the
Cuntz C*-algebra $\mathcal{O}_{d}$. Cuntz's result was later generalized by
Phillips in \cite{phillips_analogs_2012} to spatial representations of $%
L_{d} $ on $L^{p}$-spaces. Theorem~8.7 of \cite{phillips_analogs_2012}
asserts that any two spatial $L^{p}$-representations of $L_{d}$ induce the
same norm on it. The corresponding completion is the Cuntz $L^{p}$-operator
algebra $\mathcal{O}_{d}^{p}$; see \cite[Definition 8.8]%
{phillips_analogs_2012}. We now to explain how one can realize $\mathcal{O}%
_{d}^{p}$ as a groupoid $L^{p} $-operator algebra. Denote by $d^{\omega }$
the space of infinite sequences of elements of $d$, endowed with the product
topology. (Recall that $d$ is identified with the set $\left\{ 0,1,\ldots
,d-1\right\} $ of its predecessors.) Denote by $d^{<\omega }$ the space of
(possibly empty) finite sequences of elements of $d$. The length of an
element $a$ of $d^{<\omega }$ is denoted by $\mathrm{lh}(a)$. For $a\in
d^{<\omega }$ and $x\in d^{\omega } $, define $a^{\smallfrown }x\in
d^{\omega }$ to be the concatenation of $a$ and $x$. For $a\in d^{<\omega }$%
, denote by $[a]$ the set of elements of $d^{\omega }$ having $a$ as initial
segment. Clearly $\{[a]\colon a\in d^{<\omega }\}$ is a clopen basis for $%
d^{\omega }$.

\begin{definition}
The \emph{Cuntz inverse semigroup} $\Sigma _{d}$ is the inverse semigroup
generated by a zero $0$, a unit $1$, and elements $s_{j}$ for $j\in d$,
satisfying $s_{j}^{\ast }s_{k}=0$ whenever $j\neq k$.
\end{definition}

Set $s_{\varnothing }=1$ and $s_{a}=s_{a_{0}}\cdots s_{a_{lh(a)-1}}\in
\Sigma _{d}$ for $a\in d^{<\omega }\setminus \left\{ \varnothing \right\} $.
Every element of $\Sigma _{d}$ can be written uniquely as $s_{a}s_{b}^{\ast
} $ for some $a,b\in d^{<\omega }$.

\begin{remark}
The nonzero idempotents $E(\Sigma _{d})$ of $\Sigma _{d}$ are precisely the
elements of the form $s_{a}s_{a}^{\ast }$ for $a\in d^{<\omega }$. Moreover,
the function $d^{<\omega }\cup \{0\}\rightarrow E(\Sigma )$ given by $%
a\mapsto s_{a}s_{a}^{\ast }$ and $0\mapsto 0$, is a semilattice map, where $%
d^{<\omega }$ has its (downward) tree ordering defined by $a\leq b$ if and
only if $b$ is an initial segment of $a$, and $0$ is a least element of $%
d^{<\omega }\cup \{0\}$.
\end{remark}

Observe that if $a,b\in d^{<\omega }$, then $ab=0$ if and only if $a(j)\neq
b(j)$ for some $j\in \min \{\mathrm{lh}(a),\mathrm{lh}(b)\}$

\begin{lemma}
\label{Lemma: tight representation E(Cuntz)} Let $\mathcal{B}$ be a Boolean
algebra and let $\beta \colon d^{<\omega }\rightarrow \mathcal{B}$ be a
representation. Then $\beta $ is tight if and only if $\beta (\varnothing
)=1 $ and $\beta (a)\leq \bigvee\nolimits_{j\in d}\beta (a^{\smallfrown }j)$%
for every $a\in d^{<\omega }$.
\end{lemma}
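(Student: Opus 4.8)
The plan is to regard $\beta$ as a representation of the idempotent semilattice $E(\Sigma_d)=d^{<\omega}\cup\{0\}$ (extended by $\beta(0)=0$), and to prove the two implications separately. For the forward direction, suppose $\beta$ is tight. Since $\varnothing$ is the largest element of $d^{<\omega}$ (every $a$ satisfies $a\le\varnothing$ in the tree order), Lemma~\ref{Lemma: tight implies unital} yields $\beta(\varnothing)=1$ at once. For the displayed inequality, fix $a\in d^{<\omega}$ and take $Z=\{a^{\smallfrown}j\colon j\in d\}$. I would check that $Z$ is a finite cover of $E^{\{a\},\varnothing}=\{z\colon z\le a\}$: each $a^{\smallfrown}j$ lies below $a$, and any nonzero $z\le a$ either equals $a$ (so $z\wedge(a^{\smallfrown}0)=a^{\smallfrown}0\ne0$) or properly extends $a$ (so $z\le a^{\smallfrown}j$ for $j$ the next entry of $z$). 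Tightness applied to $X=\{a\}$, $Y=\varnothing$ then gives $\bigvee_{j\in d}\beta(a^{\smallfrown}j)=\beta(a)$, which is stronger than the claimed inequality.

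The reverse implication is the substantial direction, and the cleanest route is to show that the hypotheses force $\beta$ to extend to a Boolean-algebra homomorphism on the clopen algebra of $d^{\omega}$. Let $\mathcal{A}$ be the Boolean algebra of clopen subsets of $d^{\omega}$, with basis $\{[a]\colon a\in d^{<\omega}\}$. The key computational input is a leveling identity: since $\beta$ is meet-preserving we have $\beta(a^{\smallfrown}j)\le\beta(a)$, so the hypothesis upgrades to $\beta(a)=\bigvee_{j\in d}\beta(a^{\smallfrown}j)$, and by induction on $n$,
\begin{equation*}
\beta(a)=\bigvee\{\beta(c)\colon c\in d^{<\omega},\ c\le a,\ \mathrm{lh}(c)=\mathrm{lh}(a)+n\}.
\end{equation*}
Taking $a=\varnothing$ and using $\beta(\varnothing)=1$ shows that for each $N$ the family $\{\beta(c)\colon \mathrm{lh}(c)=N\}$ is a partition of unity in $\mathcal{B}$: its join is $1$, while distinct $c,c'$ of length $N$ are incomparable, so $\beta(c)\wedge\beta(c')=\beta(c\wedge c')=\beta(0)=0$. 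I would then define $\hat\beta\colon\mathcal{A}\to\mathcal{B}$ by writing a clopen $U$ as a union of level-$N$ cylinders (for $N$ large) and setting $\hat\beta(U)=\bigvee\{\beta(c)\colon \mathrm{lh}(c)=N,\ [c]\subseteq U\}$; the leveling identity makes this independent of $N$, and the partition-of-unity property makes $\hat\beta$ a unital Boolean-algebra homomorphism with $\hat\beta([a])=\beta(a)$.

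Finally I would combine this with the cited lemmas. By Lemma~\ref{Lemma: Boolean tight} the Boolean homomorphism $\hat\beta$ is a tight representation of $\mathcal{A}$. The cylinders $F=\{[a]\colon a\in d^{<\omega}\}\cup\{\varnothing\}$ form a subsemilattice of $\mathcal{A}$ that is dense, since every nonempty clopen set contains a basic cylinder; hence by Lemma~\ref{Lemma: dense} the restriction $\hat\beta|_{F}$ is tight. Transporting along the semilattice isomorphism $a\mapsto[a]$, $0\mapsto\varnothing$, which preserves meets, the zero element, orthogonality and covers, identifies $\hat\beta|_{F}$ with $\beta$, so $\beta$ is tight. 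The main obstacle is the middle paragraph: verifying that $\hat\beta$ is well defined and multiplicative, which rests entirely on the leveling identity and the partition-of-unity observation.
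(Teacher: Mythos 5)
Your proof is correct, and the forward direction matches the paper's (the paper reads $\beta(\varnothing)=1$ off the fact that $\{\varnothing\}$ covers $E^{\varnothing,\varnothing}$, while you invoke Lemma~\ref{Lemma: tight implies unital}; these are the same observation). For the converse, however, you take a genuinely different route. The paper appeals to Exel's Proposition 11.8, which reduces tightness to verifying $\beta(a)\leq\bigvee_{z\in Z}\beta(z)$ for finite covers $Z$ of singletons $\{a\}$, and then disposes of this by induction on the maximum length of the elements of $Z$ --- a two-line argument, but one that leans on an external reduction and leaves the induction to the reader. You instead upgrade the hypothesis to the leveling identity $\beta(a)=\bigvee_{\mathrm{lh}(c)=\mathrm{lh}(a)+n,\,c\leq a}\beta(c)$, use the resulting partitions of unity to extend $\beta$ to a unital Boolean algebra homomorphism $\hat\beta$ on the clopen algebra of $d^{\omega}$, and then conclude via the paper's own Lemma~\ref{Lemma: Boolean tight} and Lemma~\ref{Lemma: dense} together with the semilattice isomorphism $a\mapsto[a]$. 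This costs more writing (well-definedness and multiplicativity of $\hat\beta$), but it is self-contained modulo results already proved in the paper, it automatically handles covers of arbitrary $E^{X,Y}$ rather than just singletons, and it makes explicit the conceptual point underlying the lemma: tight representations of $E(\Sigma_d)$ are exactly Boolean homomorphisms of the clopen algebra of the Cantor set $d^{\omega}=\mathcal{G}_d^{0}$. Both arguments are valid; yours trades brevity for transparency and independence from the citation.
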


\begin{proof}
Suppose that $\beta $ is tight. Since $1$ is a cover of $E^{\varnothing
,\varnothing }$, we have $\beta (\varnothing )=1$. Similarly, $%
\{a^{\smallfrown }j\colon j\in d\}$ is a cover of $E^{\{a\},\varnothing }$
and thus $\beta (a)\leq \bigvee\nolimits_{j\in d}\beta (a^{\smallfrown }j)$.
Let us now show the \textquotedblleft if\textquotedblright\ implication. By 
\cite[Proposition 11.8]{exel_inverse_2008}, it is enough to show that for
every $a\in d^{<\omega }$ and every finite cover $Z$ of $\{a\}$, one has $%
\beta (a)\leq \bigvee\nolimits_{z\in Z}\beta (z)$. That this is true follows
from the hypotheses, using induction on the maximum length of elements of $Z$%
.
\end{proof}

\begin{lemma}
\label{Lemma: tight representation Cuntz} Let $\lambda $ be a $\sigma $%
-finite Borel measure on a standard Borel space, and $\rho $ be a
representation of $\Sigma _{d}$ on $L^{p}(\lambda )$. Then $\rho $ is tight
if and only if $\sum\nolimits_{j\in d}\rho (s_{j}s_{j}^{\ast })=\rho (1)=1.$
\end{lemma}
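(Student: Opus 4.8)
The plan is to reduce the statement to the combinatorial criterion already proved for tight representations of the tree semilattice, and then to verify that criterion by a short direct computation. By Definition~\ref{Definition: tight representation inverse semigroup}, $\rho$ is tight precisely when its restriction $\beta$ to the idempotent semilattice $E(\Sigma_d)$ is a tight representation on the Boolean algebra $\mathcal{E}(L^p(\lambda))$ of hermitian idempotents. Since $\rho$ is a semigroup homomorphism and $a\mapsto s_as_a^*$ is a semilattice map (as noted in the Remark preceding this lemma), $\beta$ is the semilattice representation $d^{<\omega}\to\mathcal{E}(L^p(\lambda))$, $a\mapsto\rho(s_as_a^*)$. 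By Lemma~\ref{Lemma: tight representation E(Cuntz)}, $\beta$ is tight if and only if $\beta(\varnothing)=\rho(1)=1$ and
\[
\rho(s_as_a^*)\le\bigvee_{j\in d}\rho(s_{a^{\smallfrown}j}s_{a^{\smallfrown}j}^*)\qquad\text{for every }a\in d^{<\omega}.
\]
So it suffices to show that this whole family of relations is equivalent to the single relation at the root $a=\varnothing$.

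The key observation is that for a fixed $a$ the idempotents $\rho(s_{a^{\smallfrown}j}s_{a^{\smallfrown}j}^*)$, $j\in d$, are pairwise orthogonal, their products vanishing because $s_{a^{\smallfrown}j}^*s_{a^{\smallfrown}k}=0$ for $j\neq k$ (the two words being incomparable). Under the semilattice isomorphism $\mathcal{B}_\lambda\cong\mathcal{E}(L^p(\lambda))$, $F\mapsto\Delta_{\chi_F}$, orthogonal hermitian idempotents correspond to disjoint sets, so their join in $\mathcal{E}(L^p(\lambda))$ is exactly their sum. Using multiplicativity of $\rho$ and $s_{a^{\smallfrown}j}=s_as_j$, I would then compute
\[
\bigvee_{j\in d}\rho(s_{a^{\smallfrown}j}s_{a^{\smallfrown}j}^*)=\sum_{j\in d}\rho(s_a)\rho(s_js_j^*)\rho(s_a^*)=\rho(s_a)\Big(\sum_{j\in d}\rho(s_js_j^*)\Big)\rho(s_a^*).
\]
Hence, when $\sum_{j\in d}\rho(s_js_j^*)=\rho(1)=1$, the right-hand side collapses to $\rho(s_a)\rho(s_a^*)=\rho(s_as_a^*)=\beta(a)$, so the displayed inequality holds (in fact with equality) for \emph{every} $a$, and tightness follows.

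For the converse, I would simply specialize the tightness criterion to $a=\varnothing$. Then $\beta(\varnothing)=1$ gives $\rho(1)=1$, while $\beta(\varnothing)\le\bigvee_{j\in d}\beta(j)=\sum_{j\in d}\rho(s_js_j^*)$ by the same orthogonality remark; the reverse inequality $\sum_{j\in d}\rho(s_js_j^*)\le 1$ holds because a finite sum of pairwise orthogonal hermitian idempotents is again a hermitian idempotent bounded by the identity. Combining these yields $\sum_{j\in d}\rho(s_js_j^*)=\rho(1)=1$. The computation is entirely elementary; the only step I would state explicitly---and the one requiring care rather than constituting a genuine obstacle---is the passage from joins to sums in $\mathcal{E}(L^p(\lambda))$ through the measure-algebra identification, which is precisely what turns the single root relation into the full family of relations governing tightness.
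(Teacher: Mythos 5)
Your proof is correct and follows essentially the same route as the paper's: both reduce tightness to the criterion of Lemma~\ref{Lemma: tight representation E(Cuntz)}, use orthogonality of the idempotents $\rho(s_{a^{\smallfrown}j}s_{a^{\smallfrown}j}^{\ast})$ to identify joins with sums, and then run the computation $\sum_{j\in d}\rho(s_{a^{\smallfrown}j}s_{a^{\smallfrown}j}^{\ast})=\rho(s_a)\bigl(\sum_{j\in d}\rho(s_js_j^{\ast})\bigr)\rho(s_a^{\ast})=\rho(s_as_a^{\ast})$ to propagate the root relation to all of $d^{<\omega}$. You simply make explicit the join-to-sum step that the paper uses silently.
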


\begin{proof}
Suppose that $\rho $ is tight. Then $\rho |_{E(\Sigma )}$ is tight and
therefore%
\begin{equation*}
1=\rho (1)=\bigvee\limits_{j\in d}\rho (s_{j}s_{j}^{\ast
})=\sum\limits_{j\in d}\rho (s_{j}s_{j}^{\ast })
\end{equation*}%
by Lemma~\ref{Lemma: tight representation E(Cuntz)}. Conversely, given $a\in
d^{<\omega }$, we have 
\begin{eqnarray*}
\sum\limits_{j\in d}\rho (s_{a^{\smallfrown }j}s_{a^{\smallfrown }j}^{\ast
}) &=&\sum\limits_{j\in d}\rho (s_{a}s_{j}s_{j}^{\ast }s_{a}^{\ast
})=\sum\limits_{j\in d}\rho (s_{a})\rho (s_{j}s_{j}^{\ast })\rho
(s_{a}^{\ast }) \\
&=&\rho (s_{a})\left( \sum\limits_{j\in d}\rho (s_{j}s_{j}^{\ast })\right)
\rho (s_{a}^{\ast })=\rho (s_{a})\rho (1)\rho (s_{a}^{\ast })=\rho
(s_{a}s_{a}^{\ast })\text{,}
\end{eqnarray*}%
which shows that $\rho $ is tight, concluding the proof.
\end{proof}

\begin{proposition}
\label{Proposition: Cuntz algs 1} The algebra $F_{\mathrm{tight}}^{p}(\Sigma
_{d})$ is $p$-completely isometric isomorphic to $\mathcal{O}_{d}^{p}$.
\end{proposition}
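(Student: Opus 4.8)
The plan is to identify the tight representations of $\Sigma_d$ with the spatial representations of the Leavitt algebra $L_d$, and then to transport Phillips's uniqueness theorem (Theorem~8.7 of \cite{phillips_analogs_2012}) across this identification. First I would pin down the relations in $\Sigma_d$: being the polycyclic (Cuntz) inverse monoid, its generators satisfy $s_j^{\ast }s_k=0$ for $j\neq k$ and $s_j^{\ast }s_j=1$, which is consistent with every element having the normal form $s_as_b^{\ast }$ and with $E(\Sigma_d)=\{s_as_a^{\ast }\}$. The semigroup map $s_as_b^{\ast }\mapsto s_as_b^{\ast }$, $0\mapsto 0$ into $(L_d,\cdot)$ extends to a surjective unital $\ast$-homomorphism $\Theta\colon \mathbb{C}\Sigma_d\to L_d$ (surjective because the $s_as_b^{\ast }$ span $L_d$), whose kernel contains $\delta_1-\sum_{j\in d}\delta_{s_js_j^{\ast }}$ by relation~(2) of Definition~\ref{Definition: Leavitt}.

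Next I would set up the bijection of representations via pullback along $\Theta$. Given $\rho\colon \Sigma_d\to \mathcal{S}(L^p(\lambda))$ tight, Lemma~\ref{Lemma: tight implies unital} forces $\rho(1)=1$, and Lemma~\ref{Lemma: tight representation Cuntz} gives $\sum_{j\in d}\rho(s_js_j^{\ast })=1$; together with $\rho(s_j)^{\ast }\rho(s_j)=\rho(s_j^{\ast }s_j)=1$ and $\rho(s_j)^{\ast }\rho(s_k)=0$ for $j\neq k$, and the fact that the $\rho(s_j)$ are spatial partial isometries with $\rho(s_j^{\ast })=\rho(s_j)^{\ast }$, these are exactly the defining relations of a spatial representation of $L_d$. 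Hence $\rho$ factors as $\rho=\widetilde{\rho}\circ\Theta$ for a unique spatial representation $\widetilde{\rho}$ of $L_d$, and conversely every spatial $\widetilde{\rho}$ yields a tight $\rho:=\widetilde{\rho}\circ\Theta$ (tightness via Lemma~\ref{Lemma: tight representation Cuntz} and Lemma~\ref{Lemma: tight representation E(Cuntz)}). This correspondence is compatible with the integrated and amplified forms: $\pi_\rho=\widetilde{\rho}\circ\Theta$ and $\pi_\rho^{(n)}=\widetilde{\rho}^{(n)}\circ\Theta^{(n)}$, where $\Theta^{(n)}$ is the entrywise map $M_n(\mathbb{C}\Sigma_d)\to M_n(L_d)$.

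Finally I would assemble the enveloping algebra. Since the spatial representations of $L_d$ separate points (Phillips's theorem exhibits a genuine norm on $L_d$), the common kernel $I_{\mathcal{R}^p_{\mathrm{tight}}}=\bigcap_{\rho}\ker\pi_\rho$ equals $\Theta^{-1}(0)=\ker\Theta$, so $\Theta$ descends to an isomorphism $\mathbb{C}\Sigma_d/I_{\mathcal{R}^p_{\mathrm{tight}}}\cong L_d$. For $a\in\mathbb{C}\Sigma_d$ one then has $\|a+I_{\mathcal{R}^p_{\mathrm{tight}}}\|=\sup_{\rho}\|\pi_\rho(a)\|=\sup_{\widetilde{\rho}}\|\widetilde{\rho}(\Theta a)\|$, which by Theorem~8.7 of \cite{phillips_analogs_2012} is the unique spatial norm defining $\mathcal{O}_d^p$; this already gives the isometric isomorphism $F_{\mathrm{tight}}^p(\Sigma_d)\cong \mathcal{O}_d^p$. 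The same computation at level $n$ yields $\|[a_{ij}]+M_n(I_{\mathcal{R}^p_{\mathrm{tight}}})\|_n=\sup_{\widetilde{\rho}}\|\widetilde{\rho}^{(n)}([\Theta a_{ij}])\|$.

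The main obstacle is the $p$-\emph{complete} part: Phillips's theorem is stated as uniqueness of the scalar norm on $L_d$, whereas the displayed matrix formula requires that all spatial representations of $L_d$ induce the \emph{same} matrix norms, i.e.\ that any two are mutually $p$-completely isometric. I would resolve this by passing to the amplified level: the amplification $\widetilde{\rho}^{(n)}$ is again a spatial representation, now of $M_n(L_d)\cong\mathbb{C}M_n(\Sigma_d)/\ker\Theta^{(n)}$, equivalently a tight representation of the amplified inverse semigroup $M_n(\Sigma_d)$, so the uniqueness argument applies level by level and the matrix norms coincide. Establishing this matrix-level spatial uniqueness — rather than the scalar statement, which is immediate — is the crux of the complete-isometry claim, and everything else in the proof is bookkeeping around the homomorphism $\Theta$.
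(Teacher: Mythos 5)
Your proposal follows the same route as the paper's proof: identify $L_d$ with the quotient of $\mathbb{C}\Sigma_d$ by the ideal generated by $\delta_1-\sum_{j\in d}\delta_{s_js_j^{\ast}}$ and $\delta_0$, use Lemma~\ref{Lemma: tight representation Cuntz} to match tight representations of $\Sigma_d$ with spatial representations of $L_d$, and transport Phillips's uniqueness theorem across this correspondence. The paper's own proof ends with ``the result then follows,'' so your explicit handling of the $p$-completely isometric part --- noting that amplifications of spatial representations are again spatial, so the uniqueness argument must be run level by level on $M_n(L_d)$ --- is a correct elaboration of a point the paper leaves entirely implicit rather than a different approach.
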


\begin{proof}
Observe that the Leavitt algebra $L_{d}$ (see Definition \ref{Definition:
Leavitt}) is isomorphic to the quotient of $\mathbb{C}\Sigma _{d}$ by the
ideal generated by the elements $\delta _{1}-\sum\limits_{j\in d}\delta
_{s_{j}s_{j}^{\ast }}$ and $\delta _{0}$. (Here, $\delta _{s}$ denotes the
canonical element in $\mathbb{C}\Sigma _{d}$ corresponding to $s\in \Sigma
_{d}$.) By Lemma~\ref{Lemma: tight representation Cuntz}, tight
representations of $\Sigma _{d}$ correspond precisely to spatial
representations of the Leavitt algebra $L_{d}$ as defined in \cite[%
Definition 7.4]{phillips_analogs_2012}. The result then follows.
\end{proof}

It is well known that $\Sigma _{d}$ the inverse semigroup of compact open
bisections of the ample groupoid $\mathcal{G}_{d}$ described in \cite%
{renault_c*-algebras_2009} (and denoted by $\mathcal{O}_{d}$ therein).

\begin{theorem}
Let $d\geq 2$ be a positive integer, and let $\mathcal{G}_{d}$ denote the
corresponding Cuntz groupoid. Then $F^{p}(\mathcal{G}_{d})$ is canonically $%
p $-completely isometrically isomorphic to $\mathcal{O}_{d}^{p}$.
\end{theorem}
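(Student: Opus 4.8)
The plan is to deduce the statement by chaining together the two structural results already established for $\Sigma_d$ and $\mathcal{G}_d$, so that essentially no new analysis is required. First I would make precise the identification of $\Sigma_d$ with an inverse semigroup of compact open slices of $\mathcal{G}_d$. Concretely, the assignment $s_a s_b^\ast \mapsto [a,b]$ (and $0\mapsto \varnothing$) is a bijection of $\Sigma_d$ onto the inverse semigroup $\Sigma$ generated inside $\Sigma_{\mathcal{K}}(\mathcal{G}_d)$ by the slices $\{[a,b]\colon a,b\in d^{<\omega}\}$. The only point to check is that this is a homomorphism of inverse semigroups: one verifies that the groupoid product $[a,b][c,d]$ obeys exactly the same multiplication rules as $s_a s_b^\ast s_c s_d^\ast$ in $\Sigma_d$. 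In particular $[a,b][c,d]=\varnothing$ unless $b$ and $c$ are comparable in the tree order of $d^{<\omega}$, which mirrors the relation $s_b^\ast s_c = 0$ unless one of $b,c$ is an initial segment of the other; when they are comparable, the concatenation bookkeeping on the middle coordinate $\mathrm{lh}(\cdot)-\mathrm{lh}(\cdot)$ reproduces the word $s_a s_b^\ast s_c s_d^\ast$. This is the standard, well-known realization of $\Sigma_d$ inside $\mathcal{G}_d$, and it is the one genuinely computational ingredient.

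Next I would record that $\{[a,b]\colon a,b\in d^{<\omega}\}$ is, as stated in the text, a basis of clopen (hence compact open) slices for $\mathcal{G}_d$; consequently $\mathcal{G}_d$ is ample and $\Sigma$ is an inverse subsemigroup of $\Sigma_{\mathcal{K}}(\mathcal{G}_d)$ that is a basis for the topology of $\mathcal{G}_d$. Thus the standing hypotheses of Theorem~\ref{Theorem: correspondence representations}, and the additional ampleness hypothesis of Corollary~\ref{cor: tight inv smgp and gpid alg}, are all met with $G=\mathcal{G}_d$ and this $\Sigma\cong\Sigma_d$.

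I would then simply invoke the two results in succession. By Corollary~\ref{cor: tight inv smgp and gpid alg}, the $L^p$-operator algebras $F^p(\mathcal{G}_d)$ and $F_{\mathrm{tight}}^p(\Sigma_d)$ are $p$-completely isometrically isomorphic, via the isomorphism arising from the representation correspondence of Theorem~\ref{Theorem: correspondence representations}; under this isomorphism the element $\chi_{[a,b]}\in C_c(\mathcal{G}_d)$ is carried to $\delta_{s_a s_b^\ast}\in\mathbb{C}\Sigma_d$. By Proposition~\ref{Proposition: Cuntz algs 1}, $F_{\mathrm{tight}}^p(\Sigma_d)$ is $p$-completely isometrically isomorphic to $\mathcal{O}_d^p$, the latter identification sending $\delta_{s_a s_b^\ast}$ to $s_a s_b^\ast$. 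Composing the two $p$-complete isometric isomorphisms yields the asserted isomorphism $F^p(\mathcal{G}_d)\cong \mathcal{O}_d^p$, sending $\chi_{[a,b]}$ to $s_a s_b^\ast$, which is manifestly the canonical one.

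The substance of the theorem is therefore entirely contained in the earlier Corollary~\ref{cor: tight inv smgp and gpid alg} (the groupoid/inverse-semigroup correspondence resting on the Disintegration Theorem) and in Proposition~\ref{Proposition: Cuntz algs 1} (the matching of tight $\Sigma_d$-representations with spatial representations of the Leavitt algebra, via Lemma~\ref{Lemma: tight representation Cuntz} and Phillips' uniqueness of the $L^p$-norm). I do not expect a serious obstacle: the one place demanding care is the inverse-semigroup identification of the first paragraph and the verification that the two cited isomorphisms are compatible on generators so that the resulting map is canonical; everything else is a formal composition of already-proven $p$-complete isometries.
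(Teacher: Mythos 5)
Your proposal is correct and follows essentially the same route as the paper: identify $\Sigma_d$ with the inverse semigroup of slices $[a,b]$ of the ample groupoid $\mathcal{G}_d$, then compose Corollary~6.17 with Proposition~7.8. The only difference is that the paper additionally invokes amenability of $\mathcal{G}_d$ to append $F^p_{\mathrm{red}}(\mathcal{G}_d)$ to the chain of isomorphisms, which is not needed for the statement as phrased, so your omission of that step is harmless.
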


\begin{proof}
It is easy to check that the function $s_{a}s_{b}^{\ast }\mapsto \lbrack
a,b] $ defines an injective homomorphism from $\Sigma _{d}$ to the inverse
semigroup of compact open bisections of $\mathcal{G}_{d}$. It is well known
that $\mathcal{G}_{d}$ is amenable; see \cite[Exercise 4.1.7]%
{renault_c*-algebras_2009}. It follows from Theorem~\ref{thm: amenable
groupoid}, Corollary~\ref{cor: tight inv smgp and gpid alg}, and Proposition %
\ref{Proposition: Cuntz algs 1}, that there are canonical $p$-completely
isometric isomorphisms 
\begin{equation*}
F_{\mathrm{red}}^{p}(\mathcal{G}_{d})\cong F^{p}(\mathcal{G}_{d})\cong F_{%
\mathrm{tight}}^{p}(\Sigma _{d})\cong \mathcal{O}_{d}^{p}.\qedhere
\end{equation*}
\end{proof}

\subsection{Analogs of AF-algebras on $L^{p}$-spaces}

In this subsection, we show how one can use the machinery developed in the
previous sections to construct those $L^{p}$-analogs of AF-algebras that
look like C*-algebras, and which are called ``spatial" in \cite%
{phillips_analogs_2014}.

Fix $n\in \mathbb{N}$. The algebra $M_{n}( \mathbb{C}) $ of $n\times n$
matrices with complex coefficients can be (algebraically) identified with $%
B( \ell ^{p}( n) ) $. This identification turns $M_{n}( \mathbb{C}) $ into
an $L^{p}$-operator algebra that we will denote---consistently with \cite%
{phillips_analogs_2012}---by $M_{n}^{p}$. It is not difficult to verify that 
$M_{n}^{p}$ can be realized as a groupoid $L^{p}$-operator algebra, and we
proceed to outline the argument.

Denote by $T_{n}$ the principal groupoid determined by the trivial
equivalence relation on $n$. It is well-known (see \cite[page 121]%
{renault_groupoid_1980}) that $T_{n}$ is amenable. Moreover, the inverse
semigroup $\Sigma _{\mathcal{K}}(T_{n})$ of compact open bisections of $%
T_{n} $, is the inverse semigroup generated by a zero element $0$, a unit $1$%
, and elements $e_{jk}$ for $j,k\in n$, subject to the relations $%
e_{jk}^{\ast }e_{\ell m}=\delta _{k\ell }e_{jm}$ for $j,k,\ell ,m\in n$.
Since $\left\{ e_{jj}:j\in n\right\} $ for a cover of $1$ in the sense of
Definition \ref{Definition:cover}, a tight $L^{p}$-representation $\rho $ of 
$\Sigma _{\mathcal{K}}\left( T\right) $ satisfies $1=\rho (1)=\sum_{j\in
n}\rho (e_{jj})$. It thus follows from \cite[Theorem~7.2]%
{phillips_analogs_2012} that the map from $M_{n}^{p}$ to the range of $\rho $%
, defined by assigning $\rho \left( e_{jk}\right) $ to the $jk$-th matrix
unit in $M_{n}^{p}$, is isometric. We conclude that $F^{p}(T_{n})$ is
isometrically isomorphic to $M_{n}^{p}$. Reasoning in the same way at the
level of amplifications shows that $F^{p}(T_{n})$ and $M_{n}^{p}$ are in
fact $p$-completely isometrically isomorphic.

If $k\in \mathbb{N}$ and $\mathbf{n}=\left( n_{0},\ldots ,n_{k-1}\right) $
is a $k$-tuple of natural numbers, then the Banach algebra $%
M_{n_{0}}^{p}\oplus \cdots \oplus M_{n_{k-1}}^{p}$ acts naturally on the $%
L^{p}$-direct sum $\ell ^{p}(n_{0})\oplus _{p}\cdots \oplus _{p}\ell
^{p}(n_{k-1})\cong \ell ^{p}(n_{0}+\cdots +n_{k-1})$. The Banach algebra $%
M_{n_{0}}^{p}\oplus \cdots \oplus M_{n_{k-1}}^{p}$ can also be realized as
groupoid $L^{p}$-operator algebra by considering the disjoint union of the
groupoids $T_{n_{0}},T_{n_{1}},\ldots ,T_{n_{k-1}}$.

Here is the definition of spatial $L^p$-operator AF-algebras

\begin{definition}
\label{definition: spatial AF-algebra} A separable Banach algebra $A$ is
said to be a \emph{spatial $L^p$-operator AF-algebra} if there exists a
direct system $(A_n,\varphi_n)_{n\in\omega}$ of $L^p$-operator algebras $A_n$
which are isometrically isomorphic to algebras of the form $%
M_{n_0}^p\oplus\cdots\oplus M_{n_k}^p$, with isometric connecting maps $%
\varphi_n\colon A_n\to A_{n+1}$, and such that $A$ is isometrically
isomorphic to the direct limit $\varinjlim (A_n,\varphi_n)_{n\in\omega}$.
\end{definition}

Banach algebras as in the definition above, as well as more general direct
limits of semisimple finite-dimensional $L^{p}$-operator algebras, will be
studied in \cite{phillips_analogs_2014}.

In the rest of this subsection, we will show that spatial $L^{p}$-operator
AF-algebras can be realized as groupoid $L^{p}$-operator algebras. \newline

\subsubsection{Spatial $L^p$-operator UHF-algebras.}

For simplicity, we will start by observing that spatial $L^{p}$-operator
UHF-algebras are groupoid $L^{p}$-operator algebras. Spatial $L^{p}$%
-operator UHF-algebras are the spatial $L^{p}$-operator AF-algebras where
the building blocks $A_{n}$ appearing in the definition are all full matrix
algebras $M_{d_{n}}^{p}$ for some $d_{n}\in \omega $. These have been
defined and studied in \cite{phillips_isomorphism_2013}.

Let $d=(d_{n})_{n\in \omega }$ be a sequence of positive integers. Denote by 
$A_{d}^{p}$ the corresponding $L^{p}$-operator UHF-algebra defined as above;
see also \cite[Definition 3.9]{phillips_simplicity_2013}. In the following
we will show that $A_{d}^{p}$ is the enveloping algebra of a natural
groupoid associated with the sequence $d$. Define $Z_{d}=\prod_{j\in n}d_{j}$%
, and consider the groupoid%
\begin{equation*}
G_{d}=\left\{ \left( \alpha ^{\smallfrown }x,\beta ^{\smallfrown }x\right)
\colon \alpha ,\beta \in \prod_{j\in n}d_{j},x\in \prod_{j\geq n}d_{j},n\in
\omega \right\}
\end{equation*}%
having $Z_{d}$ as set of objects. (Here we identify $x\in Z_{d}$ with the
pair $(x,x)\in G_{d}$.) The operations are defined by $s(\alpha
^{\smallfrown }x,\beta ^{\smallfrown }x)=\beta ^{\smallfrown }x$, $(\alpha
^{\smallfrown }x,\beta ^{\smallfrown }x)^{-1}=(\beta ^{\smallfrown }x,\alpha
^{\smallfrown }x)$, and $(\alpha ^{\smallfrown }x,\beta ^{\smallfrown
}x)(\gamma ^{\smallfrown }y,\delta ^{\smallfrown }y)=(\alpha ^{\smallfrown
}x,\delta ^{\smallfrown }y)$ when $\beta ^{\smallfrown }x=\gamma
^{\smallfrown }y$. It is well-known that $G_{d}$ is amenable; see \cite[%
Section~4.2]{renault_c*-algebras_2009}, and specifically Theorem~4.2.5 there.

Given $k\in \omega $ and given $\alpha $ and $\beta $ in $\prod_{j\in
k}d_{j} $, define $U_{\alpha \beta }$ to be the set of $\left( \alpha
^{\smallfrown }x,\beta ^{\smallfrown }x\right) \in G_{d}$ for $x\in
\prod_{j\geq k}d_{j}$. Then the collection of $U_{\alpha ,\beta }$ for $%
\alpha ,\beta \in \prod\nolimits_{j\in k}d_{j}$ and $k\in \omega $ is a
basis of compact open bisections for an ample groupoid topology on $G_{d}$. 
\newline
\indent Fix $k\in \omega $ and consider the compact groupoid $G_{d}^{k}$
given by the union of $U_{\alpha ,\beta }$ for $\alpha ,\beta \in
\prod_{j\in k}d_{j}$. The groupoid $G_{d}$ can be seen as the topological
direct limit of the system $(G_{d}^{k})_{k\in \omega }$. It is clear that,
if $n=d_{0}\cdots d_{k-1}$, then $G_{d}^{k}$ is isomorphic to the groupoid $%
T_{n}$ defined previously. Therefore $F^{p}(G_{d}^{k})$ is isometrically
isomorphic to $M_{d_{0}\cdots d_{k-1}}^{p}$.

For $k\in \mathbb{N}$, identify $C(G_{d}^{k})$ with a *-subalgebra of $%
C_{c}(G_{d})$, by setting $f\in C(G_{d}^{k})$ to be $0$ outside $G_{d}^{k}$.
For $k<n$, we claim that the inclusion map from $C(G_{d}^{k})$ to $%
C(G_{d}^{n})$ induces an isometric embedding $\varphi _{n}\colon
F^{p}(G_{d}^{k})\rightarrow F^{p}(G_{d}^{n})$. This can be easily verified
by direct computation, after noticing that $G_{d}^{k}$ and $G_{d}^{n}$ are
amenable, and hence the full and reduced norms on $C(G_{d}^{k})$ and $%
C(G_{d}^{n})$ coincide. One then obtains a direct system $\left(
F^{p}(G_{d}^{k}),\varphi _{n}\right) _{n\in \mathbb{N}}$ with isometric
connecting maps whose limit is $F^{p}(G)$. Since $F^{p}(G_{d}^{k})\cong
M_{d_{0}\cdots d_{k-1}}^{p}$ as observed above, we conclude that $%
F^{p}(G_{d})\cong A_{d}^{p}$.

\subsubsection{Spatial $L^p$-operator AF-algebras.}

As in the C*-algebra case, there is a natural correspondence between $L^{p}$%
-operator AF-algebras and Bratteli diagrams. (For the definition of Bratteli
diagrams, see \cite[Subsection 7.2.3]{rordam_introduction_2000}.) Let $(E,V) 
$ be a Bratteli diagram, and $A^{(E,V) }$ be the associated $L^{p}$-operator
AF-algebra. The algebra $A^{(E,V)}$ can be defined in the same way that AF $%
C^*$-algebras are defined from a Bratteli diagram, as direct limits of sums
of finite dimensional $C^*$-algebras, except that each matrix algebra is now
given its (spatial) $L^p$-operator norm, as described at the beginning of
this subsection. With this definition of spatial $L^p$-operator AF-algebras,
it is unclear that such algebras are indeed $L^p$-operator algebras, since
it is not known whether $L^p$-operator algebras are closed under direct
limits (with contractive maps). In the following, we will explain how to
realize $A^{(E,V) }$ as a groupoid $L^{p}$-operator algebra. As a
consequence, it will follow that spatial $L^p$-operator AF-algebras are
always representable on $L^p$-spaces, which was not known before.

Denote by $X$ the set of all infinite paths in $(E,V)$. Then $X$ is a
compact zero-dimensional space. Denote by $G^{(E,V)}$ the tail equivalence
relation on $X$, regarded as a principal groupoid having $X$ as set of
objects. It is well known that $G^{(E,V)}$ is amenable; see \cite[Chapter
III, Remark 1.2]{renault_c*-algebras_2009}. If $\alpha ,\beta $ are \emph{%
finite} paths of the same length and with the same endpoints, define $%
U_{\alpha \beta }$ to be the set of elements of $G^{(E,V)}$ of the form $%
\left( \alpha ^{\smallfrown }x,\beta ^{\smallfrown }x\right) $. The
collection of all the sets $U_{\alpha \beta }$ is a basis for an ample
groupoid topology on $G^{(E,V)}$. For $k\in \omega $, let $G_{k}^{(E,V)}$ be
the union of $U_{\alpha \beta }$ over all finite paths $\alpha ,\beta $ as
before that moreover have length at most $k$. Then $G_{n}^{(E,V)}$ is a
compact groupoid and $G$ is the topological direct limit of $%
(G_{k}^{(E,V)})_{k\in \omega }$.\newline
\indent Fix $k\in \omega $. Denote by $l$ the cardinality of the $k$-th
vertex set $V_{k}$. Denote by $n_{0},\ldots ,n_{l-1}$ the \emph{%
multiplicities }of the vertices in $V_{k}$. (The multiplicity of a vertex in
a Bratteli diagram is defined in the usual way by recursion.) Set $\mathbf{n}%
=(n_{0},\ldots ,n_{l-1})$, and observe that $G_{k}^{(E,V)}$ is isomorphic to
the groupoid $T_{\mathbf{n}}$ as defined above. In particular $%
F^{p}(G_{n}^{(E,V)})\cong M_{n_{0}}^{p}\oplus \cdots \oplus M_{n_{l-1}}^{p}$%
. As before, one can show that the direct system $(F^{p}(G_{n}^{(E,V)}))_{n%
\in \omega }$ has isometric connecting maps, and that the inductive limit is 
$F^{p}(G^{(E,V)})$. This concludes the proof that $A^{(E,V)}$ is $p$%
-completely isometrically isomorphic to $F^{p}\left( G^{(E,V)}\right) $. In
particular, this shows that $A^{(E,V)}$ is indeed an $L^{p}$-operator
algebra.

\section{Concluding remarks and outlook}

It is not difficult to see that the class of $L^{p}$-operator algebras is
closed---within the class of all matricially normed Banach algebras---under
taking subalgebras and ultraproducts. As noted by Ilijas Farah and N.\
Christopher Phillips, this observation, together with a general result from
logic for metric structures, implies that the class of $L^{p}$-operator
algebras is---in model-theoretic jargon---\emph{universally axiomatizable}.
This means that $L^{p}$-operator algebras can be characterized as those
matricially normed Banach algebras satisfying certain expressions only
involving the algebra operations, the matrix norms, continuous functions
from $\mathbb{R}^{n}$ to $\mathbb{R}$, and suprema over balls of matrix
amplifications.

Determining what these expressions are seems to be, in our opinion, an
important problem in the theory of algebras of operators on $L^{p}$-spaces.

\begin{problem}
\label{Problem:axiomatize}Find an explicit intrinsic characterization of $%
L^{p}$-operator algebras within the class of matricially normed Banach
algebras.
\end{problem}

An explicit characterization of algebras acting on\emph{\ subspaces of
quotients }of $L^{p}$-spaces was provided by Le Merdy in \cite%
{merdy_representation_1996}.\ These are precisely the matricially normed
Banach algebras that are moreover $p$-operator spaces in the terminology of 
\cite{daws_p-operator_2010}, and such that multiplication is $p$-completely
contractive. Similar results have been obtained by Junge for algebras of
operators on subspaces of $L^{p}$-spaces; see \cite[Corollary~1.5.2.2]%
{junge_factorization_1996}.

A stumbling block towards answering Problem \ref{Problem:axiomatize} is the
fact that $L^{p}$-operator algebras are not closed under quotients; see \cite%
{gardella_quotients_2014}. This gives a lower bound on the complexity of a
possible axiomatization of $L^{p}$-operator algebras. Precisely, it implies
that $L^{p}$-operator algebras do not form a \emph{variety }of Banach
algebras in the sense of \cite{dixon_varieties_1976}, unlike algebras acting
on subspaces of quotients of $L^{p}$-spaces.

It is conceivable that, for sufficiently nice groupoids, ideals of the
associated $L^{p}$-operator algebras correspond to subgroupoid. In turn
quotients would correspond to quotients at the groupoid level. In particular
this would imply that such groupoid $L^{p}$-operator algebras are indeed
closed under quotients. This would be another feature of groupoid $L^{p}$%
-operator algebra, and similarity trait with C*-algebras. Such a general
result about quotients would also simplify the task of proving simplicity of 
$L^{p}$-operator algebras coming from groupoids. This problem has been dealt
by ad hoc methods for UHF and Cuntz $L^{p}$ operator algebras in \cite%
{phillips_simplicity_2013}.

\begin{problem}
Is $F_{red}^{p}(G)$ simple whenever $G$ is a minimal and topologically
principal étale groupoid?
\end{problem}

A potential application of groupoids to the theory of $L^{p}$-operator
algebras comes from the technique of Putnam subalgebras. Let $X$ be a
compact metric space and let $h\colon X\rightarrow X$ be a homeomorphism.
Denote by $u$ the canonical unitary in the C*-crossed product $C^{\ast }(%
\mathbb{Z},X,h)$ implementing $h$. If $Y$ is a closed subset of $X$, then
the corresponding \emph{Putnam subalgebra} $C^{\ast }(\mathbb{Z},X,h)_{Y}$
is the C*-subalgebra of $C^{\ast }(\mathbb{Z},X,h)$ generated by $C(X)$ and $%
uC_{0}(X\setminus Y)$. It is known that $C^{\ast }(\mathbb{Z},X,h)_{Y}$ can
be described as the enveloping C*-algebra of a suitable étale groupoid.

In the context of C*-algebras, Putnam subalgebras are fundamental in the
study of transformation group C*-algebras of minimal homeomorphisms. For
example, Putnam showed in \cite[Theorem~3.13]{putnam_c*-algebras_1989} that
if $h$ is a minimal homeomorphism of the Cantor space $X$, and $Y$ is a
nonempty clopen subset of $X$, then $C^{\ast }(\mathbb{Z},X,h)_{Y}$ is an
AF-algebra. This is then used in \cite{putnam_c*-algebras_1989} to prove
that the crossed product $C^{\ast} ( \mathbb{Z},X,h) _{Y}$ is a simple A$%
\mathbb{T} $-algebra of real rank zero. Similarly, Putnam subalgebras were
used by Huaxin Lin and Chris Phillips in \cite{lin_crossed_2010} to show
that, under a suitable assumption on $K$-theory, the crossed product of a
finite-dimensional compact metric space by a minimal homeomorphism is a
simple unital C*-algebra with tracial rank zero.

Considering the groupoid description of Putnam subalgebras provides a
natural application of our constructions to the theory of $L^{p}$-crossed
products introduced in \cite{phillips_crossed_2013}. It is conceivable that
with the aid of groupoid $L^{p}$-operator algebras, Putnam subalgebras could
be used to obtain generalizations of the above mentioned results to $L^{p}$%
-crossed products.




\providecommand{\bysame}{\leavevmode\hbox to3em{\hrulefill}\thinspace} %
\providecommand{\MR}{\relax\ifhmode\unskip\space\fi MR } 
\providecommand{\MRhref}[2]{  \href{http://www.ams.org/mathscinet-getitem?mr=#1}{#2}
} \providecommand{\href}[2]{#2}

\end{document}